\theoremstyle{plain}
\newtheorem{thm}{Theorem}[section]
\newtheorem{cor}[thm]{Corollary}
\newtheorem{prop}[thm]{Proposition}
\newtheorem{lemma}[thm]{Lemma}
\theoremstyle{definition}
\newtheorem{remark}[thm]{Remark}
 \newcommand{\Hom}{\ensuremath{{\mathrm{Hom}}}}
\newcommand{\Ox}{\ensuremath{\mathcal{O}}}
\newcommand{\Q}{\ensuremath{\mathbb{Q}}}
\newcommand{\Z}{\ensuremath{\mathbb{Z}}}
\newcommand{\C}{\ensuremath{\mathbb{C}}}
\renewcommand{\H}{\ensuremath{\mathbb{H}}}
\newcommand{\SL}{\text{SL}}
\newcommand{\PSL}{\text{PSL}}
\newcommand{\GL}{\text{GL}}
\newcommand{\tr}{\text{tr}}
\newcounter{nootje}
\newcommand{\mat}[4]{\left(\begin{array}{cc}#1 & #2 \\ #3 & #4 \end{array}\right)}
\begin{document}
\title[]{Symmetries and Detection of Surfaces by the Character Variety}

\author[Jay Leach and Kathleen L. Petersen]{J. Leach and K. L. Petersen}


\begin{abstract}
We extend Culler and Shalen's construction of detecting essential surfaces in 3-manifolds to 3-orbifolds.  We do so in the setting of the $\SL_2(\C)$ character variety, and following Boyer and Zhang in the $\PSL_2(\C)$ character variety as well.  We show that any slope detected on a canonical component of the $\mathrm{(P)SL}_2(\C)$ character variety of a one cusped hyperbolic 3-manifold with symmetries must be the slope of a symmetric surface.  As an application, we show that for each symmetric double twist knot there are slopes which are strongly detected on the character variety but not on the canonical component.

\end{abstract}
 
\maketitle


\section{Introduction}

Culler and Shalen \cite{MR683804} developed a beautiful theory showing  how to  associate surfaces in 3-manifolds to representations of their fundamental groups.  
To certain points and ideal points in the $\SL_2(\C)$ character variety of a 3-manifold  there are valuations that value negatively.  Such a valuation induces a non-trivial action on  a Bass-Serre tree.   One can associate  essential surfaces dual to such an action. We say such a  surface is {\em detected} by the action. 
 This theory was extended to the $\PSL_2(\C)$ setting by Boyer and Zhang \cite{MR1670053}. 
 We study how the existence of a symmetry of the manifold effects the  detection of surfaces. 

The $\SL_2(\C)$ character variety of a manifold $M$, $X(M)$, is the set of representations of $\pi_1(M)$ to $\SL_2(\C)$ up to trace equivalence, and the $\PSL_2(\C)$ character variety $\bar{X}(M)$ can be defined similarly. These character varieties are $\C$ algebraic sets.
An irreducible component of the $\SL_2(\C)$ character variety of a hyperbolic 3-manifold $M$ is called a  {\em canonical component}, and is written $X_0(M),$  if it contains the character of a discrete and faithful representation. Canonical components are often called geometric components because they encode a wealth of geometric information about the underlying manifold.  Thurston \cite{thurston} showed that all but finitely many representations coming from Dehn filling of a single cusp are on a canonical component. 
We will call a surface in $M$  {\em symmetric} if it is fixed set-wise by the group of orientation preserving isometries of $M$, and {\em non-symmetric} otherwise. 
In Section~\ref{mainsection} we prove the main goal of this paper.   \begin{restatable}{thm}{maintheorem}
\label{thm:maintheorem}
Let $M$ be a finite volume, orientable,  hyperbolic 3-manifold with a single cusp, and $G$ a subgroup of the  group of orientation preserving isometries of $M$ with the property that  the orbifold quotient $M/G$ has a flexible cusp. 
Let $v$ be a discrete valuation on a field $F$  such that there is a representation $\phi:\pi_1(M) \rightarrow \SL_2(F)$ with $v(\tr(\phi(\gamma)))<0$ for some $\gamma \in \pi_1(M)$. Then $v$ detects an essential surface in $M$.  If this representation $\phi$ is associated to a point (or ideal point)  on a canonical component $X_0(M)$ then $v$ detects a symmetric essential surface in $M$.
\end{restatable}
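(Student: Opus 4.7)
The plan is to prove the first assertion by a direct application of the orbifold Culler--Shalen machinery developed earlier in the paper: the hypothesis $v(\tr(\phi(\gamma)))<0$ produces via Bass--Serre theory a non-trivial action of $\pi_1(M)$ on a simplicial tree without inversions, and taking the preimage of the midpoints of edges in a well-chosen equivariant map to the tree yields an essential surface in $M$ dual to this action. This part requires neither the symmetry hypothesis nor that $\phi$ be on the canonical component.

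For the second assertion, I would first observe that $G$ acts on $X(M)$ by precomposition with the outer automorphisms of $\pi_1(M)$ induced by elements of $G$, and because $G$ consists of orientation preserving isometries this action fixes the discrete faithful character. Hence $X_0(M)$ is $G$-invariant. The $\SL_2(\C)$ orbifold character variety $X(M/G)$ constructed in the paper comes equipped with a natural restriction map $r: X(M/G) \to X(M)$ induced by the inclusion $\pi_1(M) \hookrightarrow \pi_1^{orb}(M/G)$. Under the flexible cusp hypothesis the canonical component $X_0(M/G)$ has the expected positive dimension, and a dimension count together with the fact that $r$ hits the discrete faithful character shows that $r$ maps $X_0(M/G)$ onto $X_0(M)$.

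Next, I would lift the data: after possibly replacing $F$ by a finite extension $F'$ and extending the valuation $v$, the character of $\phi$ on $X_0(M)$ is the image under $r$ of the character of some $\tilde\phi : \pi_1^{orb}(M/G) \to \SL_2(F')$ on $X_0(M/G)$. By construction $\tilde\phi$ restricted to $\pi_1(M)$ is conjugate to $\phi$, so the same trace $\tr(\tilde\phi(\gamma))$ has negative (extended) valuation. Applying the orbifold Culler--Shalen construction to $\tilde\phi$ produces an essential suborbifold $\bar{\Sigma} \subset M/G$, and its preimage $\Sigma \subset M$ is tautologically $G$-invariant, hence symmetric.

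The main obstacle is verifying that this symmetric surface $\Sigma$ is actually the surface detected by $v$ through the original representation $\phi$. This reduces to checking that the Bass--Serre tree of $\tilde\phi$, equipped with the action of $\pi_1^{orb}(M/G)$, restricts along $\pi_1(M) \hookrightarrow \pi_1^{orb}(M/G)$ to an equivariantly isomorphic copy of the Bass--Serre tree of $\phi$; the functoriality of the Bass--Serre construction with respect to valued subfields makes this essentially automatic once the algebraic lift is in place. At an ideal point one must additionally track the behaviour of $v$ through the field extension $F'/F$ and confirm that the extended valuation still valuates some trace of $\tilde\phi$ negatively, so that the orbifold construction produces a genuinely non-trivial tree action.
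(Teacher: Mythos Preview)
Your overall strategy---pass to the orbifold $Q=M/G$, lift the character to one on a canonical component of the orbifold character variety, run Culler--Shalen there to get an essential $2$-suborbifold, and pull it back to a $G$-invariant surface in $M$---is exactly the paper's approach, and your final paragraph correctly identifies the compatibility check (the paper's Lemma~\ref{lemma:liftedsurfaceisdetected}) as the remaining step.

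There is, however, a genuine gap. You work throughout with the $\SL_2(\C)$ character variety $X(M/G)$ and speak of its canonical component $X_0(M/G)$. But the discrete faithful representation $\pi_1^{orb}(Q)\to\PSL_2(\C)$ need not lift to $\SL_2(\C)$: the obstruction lives in $H^2(\pi_1^{orb}(Q);\Z/2\Z)$, and orbifold groups typically have torsion that makes this class nonzero. So there may be no canonical component in $X(M/G)$ at all, and your restriction map $r$ has nothing to map from. The paper deals with this by first invoking Lemma~\ref{lemma:SL=PSL} to transfer the problem on the manifold side to $\bar X(M)$, then carrying out the entire argument in the $\PSL_2$ setting (where Boyer--Zhang's central $\Z/2\Z$ extension supplies the tautological representation needed at ideal points), and only at the end returning to $\SL_2$.

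A secondary issue: your ``dimension count'' is too casual a substitute for the actual input here, which is Long and Reid's theorem that under the flexible-cusp hypothesis the restriction $\hat p:\bar X_0(Q)\to\bar X_0(M)$ is a \emph{birational equivalence}. This is where the flexible-cusp hypothesis is really used, and it gives more than surjectivity: it identifies the function fields, so that at an ideal point the order-of-vanishing valuation on $\C(\bar X_0(M))$ literally becomes the one on $\C(\bar X_0(Q))$ with no nontrivial field extension to manage. A mere dominant map would still let the argument through with extra bookkeeping, but you should cite the sharper result rather than improvise it.
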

Essential surfaces are detected in a highly non-canonical way. A single point can detect multiple surfaces. This theorem shows that if $M$ has a symmetry, then any point (or ideal point)  on a canonical component  which detects an essential surface detects a symmetric essential surface.

 For affine points on the character variety, we consider any valuation. One example is the $\pi$-adic  valuation for a prime ideal $\pi$ associated to an algebraic non-integral (ANI) point. 
For ideal points we consider  the order of vanishing valuation.  An ideal point  is   a point in a smooth projective completion that is not in the character variety, a point at infinity. (See Section~\ref{section:PSLOrbs}.) 
 We will work with the $\PSL_2(\C)$ character variety for this proof. There are obstructions to lifting a $\PSL_2(\C)$ representation to an $\SL_2(\C)$ representation which make the $\PSL_2(\C)$ character variety is more widely applicable,  especially when studying  the quotient orbifold  $M/G$.

A key step in the proof of Theorem~\ref{thm:maintheorem} is our extension of Culler and Shalen's construction of essential surfaces in 3-manifolds from actions on trees \cite{MR683804, MR1886685} to the orbifold setting.  
 We show that this theory can be extended to orbifolds, in both the $\SL$ and $\PSL$ setting. 
In Section~\ref{section:PSLOrbs} we discuss the valuations for orbifold groups in both the $\SL$ and $\PSL$ setting, recalling how these valuations give rise to actions on trees.  And we prove the following.
\begin{restatable}{thm}{orbifoldmaintheorem}
\label{orbifoldTreeAction} 
Let $Q$ be a compact, orientable, irreducible 3-orbifold.  If $\pi_1^{orb}(Q)$ acts non-trivially and without inversions on a tree $T$, then there exists an essential $2$-suborbifold $F$ in $Q$ dual to this action.
\end{restatable}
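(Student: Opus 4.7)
The plan is to adapt the classical Culler--Shalen construction, replacing the manifold universal cover by the orbifold universal cover $\widetilde{Q}$ and insisting that every map be $\pi_1^{orb}(Q)$-equivariant. Since $Q$ is a compact, orientable, irreducible $3$-orbifold it is good, so $\widetilde{Q}$ is a simply connected $3$-manifold on which $\pi_1^{orb}(Q)$ acts properly discontinuously with quotient $Q$. First I would fix an orbifold triangulation of $Q$ in which the singular locus $\Sigma$ is a subcomplex, and lift it to obtain a $\pi_1^{orb}(Q)$-equivariant triangulation of $\widetilde{Q}$.

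Next I would build a continuous, $\pi_1^{orb}(Q)$-equivariant, and (after subdivision) simplicial map $f\colon \widetilde{Q}\to T$ by inducting on skeleta. For each orbit of vertices in $\widetilde{Q}$ pick a representative $v$; its stabilizer $\Gamma_v$ is finite, and by Serre's fixed-point theorem a finite group acting on a tree without inversions fixes a vertex, so we may send $v$ into $T^{\Gamma_v}$ and extend equivariantly. For a higher-dimensional simplex $\sigma$, the stabilizer $\Gamma_\sigma$ fixes a nonempty subtree $T^{\Gamma_\sigma}$ already containing the image of $\partial\sigma$, so $f$ extends over $\sigma$ into $T^{\Gamma_\sigma}$. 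After an equivariant subdivision we may arrange that $f$ is simplicial and that the preimages of midpoints of edges of $T$ are transverse to $\Sigma$. The set $\widetilde{F} := f^{-1}(\text{midpoints of edges})$ is then a $\pi_1^{orb}(Q)$-invariant properly embedded surface in $\widetilde{Q}$; since the action on $T$ is nontrivial, $f$ cannot be equivariantly collapsed into a single vertex-star and hence $\widetilde{F}$ is nonempty. Its quotient $F := \widetilde{F}/\pi_1^{orb}(Q)$ is a $2$-suborbifold of $Q$, because $\Sigma$ meets $\widetilde{F}$ transversely.

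Essentialness of $F$ would follow the lines of the manifold argument via Bass--Serre theory: the dual graph of groups decomposition splits $\pi_1^{orb}(Q)$ with vertex groups the orbifold fundamental groups of the components of $Q\setminus F$ and edge groups those of the components of $F$. An orbifold compressing disc for $F$ in $Q$ would lift to a compressing disc in $\widetilde{Q}$ bounding a loop that $f$ sends into a single edge of $T$, contradicting nontriviality of the action; irreducibility of $Q$ rules out spherical $2$-suborbifold components bounding discal $3$-suborbifolds; and boundary-parallel components can be absorbed and removed to produce a simpler equivariant map, a process that terminates because $\widetilde{F}$ has finite complexity per fundamental domain.

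I expect the main obstacle to be the careful handling of the singular locus $\Sigma$. The equivariance of $f$ forces $f(\Sigma)$ to lie in the union of the fixed subtrees $T^{\Gamma_\sigma}$, and one must verify that $f$ can be perturbed to make $\widetilde{F}$ transverse to $\Sigma$ without breaking equivariance, so that $F$ inherits an honest $2$-suborbifold structure and its essentialness can be phrased in genuine orbifold terms (no orbifold compressing disc, no spherical $2$-suborbifold bounding a discal $3$-suborbifold, no boundary-parallel toric or spherical piece). A secondary subtlety is translating the Bass--Serre consequences of a nontrivial action into the orbifold category, where every cell stabilizer carries nontrivial finite isotropy that must be reconciled with the fixed subtrees used in the construction.
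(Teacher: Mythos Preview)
Your overall architecture matches the paper's: build a $\pi_1^{orb}(Q)$-equivariant map $\widetilde{Q}\to T$ skeleton by skeleton (using that finite stabilizers fix vertices of $T$), take the preimage of edge midpoints, then massage the resulting 2-suborbifold into an essential one. The triangulation step and the inductive extension are essentially what the paper does in Lemmas~\ref{treeproof1} and~\ref{treeproof2}.

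The genuine gap is your incompressibility argument. You assert that an orbifold compressing disc for $F$ ``would lift to a compressing disc in $\widetilde{Q}$ bounding a loop that $f$ sends into a single edge of $T$, contradicting nontriviality of the action.'' This is not a contradiction. Bass--Serre only tells you that $i_*\bigl(\pi_1^{orb}(F_j)\bigr)$ lies inside an edge stabilizer; it does \emph{not} say that $i_*$ is injective, so a loop in $F_j$ that bounds in $Q$ is entirely compatible with a nontrivial action. The initial dual suborbifold produced by a generic equivariant map is typically compressible. What one must do --- and what the paper does --- is perform surgery: given a compressing orbifold disc $D$, explicitly modify $\tilde f$ to a new equivariant map $\tilde f'$ whose dual suborbifold is $F$ compressed along $D$, check that $\tilde f'$ is still equivariant and transverse to midpoints, and argue that the process terminates by a complexity count. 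Nontriviality of the action is invoked only at the very end, to guarantee that after all compressions and removal of spherical and boundary-parallel pieces the suborbifold is still nonempty.

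A smaller point: your inductive step on higher simplices claims that $f(\partial\sigma)$ already lies in the fixed subtree $T^{\Gamma_\sigma}$. Equivariance gives $\Gamma_\sigma$-\emph{invariance} of $f(\partial\sigma)$, not pointwise fixedness, so this needs justification. The paper avoids the issue by putting the singular locus into the $1$-skeleton, so that interiors of $2$- and $3$-simplices have trivial isotropy and only the extension over $1$-simplices requires care (handled using the no-inversions hypothesis).
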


A   similar result was proven independently by  Yokoyama   \cite{MR3501267}.  However, we require some specific properties for our applications to surface detection.  For example,   we assume that the orbifolds are orbifold irreducible and show that the constructed $2$-suborbifolds can always be made transverse to the singular set of the orbifold.
 This is necessary  to show that these $2$-suborbifolds lift to essential surfaces in covering manifolds, which we show in Lemma~\ref{suborbliftinglemma} in Section~\ref{section:lifting}.

A slope $r\in \Q \cup \{1/0\}$ is called a {\em boundary slope} if there is an essential surface $F$ in $M$ such that $\partial F \cap \partial M$ is a non-empty set of parallel simple closed curves on $\partial M$ of slope $r$.  A boundary slope $r$ is called {\em strict} if there is an essential surface $F$ in $M$  which is not a  fiber or semi-fiber such that  $\partial F \cap \partial M$ has slope $r$. 
It is known \cite[Proposition 1.2.7]{MR881270} that fibered and semi-fibered surfaces are never detected by ideal points of  components of the character variety which contain irreducible representations. 
  Let $x$ be a point in a smooth projective closure of $X(M)$. 
The boundary slope $r$   is said to be {\em detected} at $x$   if a surface with boundary slope $r$  is detected at $x$.    
We call a slope $r$ {\em strongly detected} by $x$ if $x$ detects no closed essential surface, and otherwise we say $r$ is {\em weakly detected}  by $x$.  (Some authors use the term detected exclusively for detection at an ideal point.)  
We will call a  boundary slope which is the slope of a symmetric  essential surface   {\em symmetric}, and we will call all other boundary  slopes {\em non-symmetric}. 
In \cite{ccgls} it was shown that the detected boundary slopes at ideal points are the slopes of the Newton polygon of the $A$-polynomial.

For knot complements in $S^3$ with the standard framing the boundary slope $0/1$ is detected by characters of abelian representations. This corresponds to the longitude slope, which is the slope of a Seifert surface.   
 Motegi \cite{MR953952} showed that there are closed graph manifolds that contain essential tori  not detected by ideal points of the  character variety.
Boyer and Zhang \cite[Theorem 1.8]{ MR1670053}  showed that there are infinitely many closed hyperbolic 3-manifolds
whose character varieties do not detect closed essential surfaces at ideal points. Schanuel
and Zhang \cite[Example 17] {MR1835066} gave an example of a closed hyperbolic 3-manifold with a closed essential
surface that is not   detected by an ideal point but is ANI-detected. 
They also constructed a family of graph manifolds with boundary slopes that are not strongly detected at ideal points, but are weakly detected.
Chesebro and Tillmann \cite{MR2395254} used mutants to construct  examples of (cylindrical) knot complements containing strict
boundary slopes which are weakly detected at ideal points,  but not strongly detected at ideal points.
Chesebro \cite{MR3073906} also demonstrated a connection between  module structures on the coordinate ring of an irreducible
component of the character variety and the detection of closed essential surfaces.  Casella, Katerba, and Tillmann \cite{CKT} used this to show that there are closed essential surfaces in hyperbolic knot complements which are not detected by ideal points of the character variety.  These surfaces are also not ANI-detected.

The following corollary about boundary slopes follows immediately from Theorem~\ref{thm:maintheorem}.
\begin{cor}
Let $M$ be a finite volume, orientable,  hyperbolic 3-manifold with a single cusp,   such that  the orbifold quotient $M/G$ has a flexible cusp.  Any boundary  slope detected on $X_0(M)$ is a symmetric slope.
\end{cor}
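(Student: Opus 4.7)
The plan is to apply Theorem~\ref{thm:maintheorem} directly and then check that the symmetric surface it produces has the expected boundary slope. Suppose $r$ is a boundary slope detected on $X_0(M)$; by definition there is a point or ideal point $x \in X_0(M)$ at which some essential surface $F$ with $\partial F$ of slope $r$ is detected. First I would unpack detection at $x$ in terms of a valuation: in the affine case, $x$ corresponds to a representation $\phi:\pi_1(M)\to \SL_2(F)$ and $v$ is the $\pi$-adic valuation associated to an ANI point; at an ideal point, $\phi$ is the tautological representation over the function field of $X_0(M)$ and $v$ is the order-of-vanishing valuation. In either case, detection of a surface forces $v(\tr(\phi(\gamma)))<0$ for some $\gamma\in\pi_1(M)$.

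Next, since $\phi$ is associated to a point (or ideal point) of $X_0(M)$, Theorem~\ref{thm:maintheorem} applies and yields a symmetric essential surface $F'$ in $M$ detected by $v$. It remains to check that $F'$ has boundary slope $r$. In the Culler--Shalen framework, the detected surfaces at $x$ are dual to the $\pi_1(M)$-action on the Bass--Serre tree $T$ determined by $v$, and the boundary slope of any detected surface with nonempty boundary is governed by the restriction of this action to the peripheral subgroup $\pi_1(\partial M)\cong\Z^2$. A peripheral slope $s$ corresponds to an element fixing a vertex of $T$ precisely when $v(\tr(\phi(\gamma_s)))\ge 0$; since the $\Z^2$-action on $T$ is either trivial or admits at most one such primitive slope up to sign, and since $F$ realizes slope $r$, this distinguished slope is $r$. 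Hence any bounded detected surface at $x$ has boundary slope $r$.

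The main obstacle, then, is to rule out that the symmetric surface $F'$ is closed. This is controlled by the construction inside the proof of Theorem~\ref{thm:maintheorem}: $F'$ arises by lifting, via Lemma~\ref{suborbliftinglemma}, an essential $2$-suborbifold of the quotient $M/G$ built from the induced action of $\pi_1^{orb}(M/G)$ on $T$ (Theorem~\ref{orbifoldTreeAction}). Because $M/G$ has a flexible cusp, the peripheral subgroup of $M/G$ covers a finite-index subgroup of $\pi_1(\partial M)$, so the distinguished peripheral slope $r$ descends to a nontrivial peripheral slope in $M/G$. The $2$-suborbifold can therefore be chosen with nonempty boundary on $\partial(M/G)$ of that slope, and its lift $F'$ inherits nonempty boundary of slope $r$ in $M$. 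Consequently $r$ is realized as the boundary slope of a symmetric essential surface, i.e.\ $r$ is a symmetric slope.
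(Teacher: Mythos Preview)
Your approach matches the paper's, which states the corollary as an immediate consequence of Theorem~\ref{thm:maintheorem} without further argument. Your unpacking of the strongly detected case is correct and supplies detail the paper omits: once the peripheral dichotomy at $x$ is in case~(1) (a unique primitive $\gamma_r$ with $v(\tr\phi(\gamma_r))\ge 0$), \emph{every} essential surface dual to the action on $T$ must have nonempty boundary of slope $r$, since a closed dual surface would force all of $\pi_1(\partial M)$ into a vertex stabilizer. In particular the symmetric $F'$ from Theorem~\ref{thm:maintheorem} has slope $r$, and you are done.

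Your final paragraph, however, does not close the weakly detected case. If $x$ falls under case~(2) (all peripheral traces have nonnegative valuation) and $r$ is only weakly detected, then $Q=M/G$ is also in case~(2), and the dual $2$-suborbifold $F_Q$ produced by Theorem~\ref{orbifoldTreeAction} may well be closed; nothing in that theorem or in Proposition~\ref{prop:generaltype12} lets you \emph{choose} $F_Q$ to have boundary of a prescribed slope. Your sentence ``The $2$-suborbifold can therefore be chosen with nonempty boundary on $\partial(M/G)$ of that slope'' is not justified by the cited results. (There is also a small slip of direction: it is $p_*(\pi_1(\partial M))$ that sits as a finite-index subgroup of $\pi_1^{orb}(\partial Q)$, not the other way around.) The paper's own ``follows immediately'' glosses over the same point, and for the applications that follow---two-bridge knot complements are small, so case~(2) never occurs---the issue is moot. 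But if you want the corollary in the generality stated, either restrict to strongly detected slopes or supply a separate argument for the weak case.
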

\noindent
We will prove the following in Section~\ref{mainsection}.
\begin{cor}
\label{maincor}
Let $M$ be a finite volume, orientable,  hyperbolic 3-manifold with a single cusp,  such that  the orbifold quotient $M/G$ has a flexible cusp.    There are at least two distinct symmetric essential surfaces in $M$ with different  slopes.
\end{cor}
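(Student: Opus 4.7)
The approach is to combine Theorem \ref{thm:maintheorem} with the classical $A$-polynomial machinery on the canonical component. Since $M$ is a one-cusped hyperbolic 3-manifold, Thurston's hyperbolic Dehn surgery theorem shows that $X_0(M)$ is a complex algebraic curve; let $\widehat{X}_0(M)$ denote its smooth projective completion. At every ideal point $\xi \in \widehat{X}_0(M) \setminus X_0(M)$, the order-of-vanishing valuation $v_\xi$ is strictly negative on some trace function $I_\gamma$ (this is precisely what makes $\xi$ an ideal point), so by Theorem \ref{thm:maintheorem} it detects a symmetric essential surface in $M$. It therefore suffices to produce ideal points of $X_0(M)$ whose detected surfaces realize at least two distinct boundary slopes.

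For this I would invoke the \cite{ccgls} theorem recalled in the introduction: the slopes of the edges of the Newton polygon of the $A$-polynomial $A_0(L,M)$ associated to $X_0(M)$ are boundary slopes strongly detected at ideal points of $X_0(M)$. The crucial observation is that this Newton polygon is genuinely two-dimensional, for otherwise we would obtain a nontrivial monomial relation $L^a M^b = c$ holding identically on $X_0(M)$ with $(a,b)\neq (0,0)$; but Thurston's Dehn surgery parametrization supplies a one-parameter family of characters on $X_0(M)$ along which every such monomial varies non-trivially (for instance, different Dehn fillings give different eigenvalues of $\mu^b \lambda^a$). A convex polygon in $\R^2$ has at least three edges, not all mutually parallel, so its edges realize at least two distinct slopes in $\Q \cup \{1/0\}$.

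Combining these ingredients produces at least two distinct boundary slopes detected at ideal points of $X_0(M)$, each of which, by Theorem \ref{thm:maintheorem}, is realized by a symmetric essential surface; any two of these furnish the required pair. The main technical point to verify is the Newton polygon interpretation of detected slopes at the level of the single irreducible component $X_0(M)$ rather than at the level of the full character variety, but this is standard once one restricts to the factor of the $A$-polynomial cut out by $X_0(M)$. A subsidiary subtlety is excluding the possibility that the detected surface is a fiber or semi-fiber, but as noted in the introduction these are never detected by ideal points of components containing irreducible representations, and $X_0(M)$ contains the discrete faithful character.
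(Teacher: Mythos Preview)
Your proposal is correct and follows essentially the same strategy as the paper: both arguments reduce to (i) ideal points of $X_0(M)$ detect at least two distinct boundary slopes, and (ii) Theorem~\ref{thm:maintheorem} then forces these detected surfaces to be symmetric. The paper's proof of Corollary~\ref{maincor} simply cites \cite{MR2303551} for step~(i), whereas you derive it via the Newton polygon of the $A$-polynomial factor $A_0$ cut out by $X_0(M)$; in fact the paper gives exactly this Newton polygon argument later, inside the proof of Theorem~\ref{mainresultTheoremdetectedslopesstrict}.

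One small sharpening: your justification that the Newton polygon of $A_0$ is genuinely two-dimensional is a bit informal. The clean statement to invoke is that $I_\gamma$ is non-constant on $X_0(M)$ for every nontrivial peripheral $\gamma$ (Culler--Shalen, \cite[Proposition~2]{MR735339}); this immediately rules out $A_0$ being of the form $f(L^aM^b)$ and hence forces the polygon to be two-dimensional. Your Dehn-surgery heuristic is pointing at the same fact but is less direct as stated.
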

\noindent
In Section~\ref{examplesection}, we use these results to prove a general statement for hyperbolic two-bridge knots. 
\begin{cor}\label{cor:twobridge}
Every hyperbolic two-bridge knot complement contains at least two symmetric essential surfaces with different boundary slopes. \end{cor}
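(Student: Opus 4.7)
The plan is to verify the hypotheses of Corollary~\ref{maincor} for a hyperbolic two-bridge knot complement $M$, with symmetry group $G = \langle \tau \rangle$ generated by a strong inversion $\tau$. First, $M$ is orientable, finite-volume, and has a single torus cusp, so the hypotheses about $M$ itself are immediate. What I must produce is the orientation-preserving isometry subgroup $G$ and check that the quotient orbifold $M/G$ has a flexible cusp.

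By the classical rational-tangle (Schubert normal form) description of two-bridge knots, every two-bridge knot $K \subset S^3$ is strongly invertible: there is an orientation-preserving involution $\tau$ of $S^3$ whose fixed-point set is an unknotted circle meeting $K$ transversely in exactly two points. In the hyperbolic case, Mostow rigidity realizes the induced involution of the exterior $M$ as an orientation-preserving self-isometry, and I set $G = \langle \tau \rangle$.

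Next, I identify the cusp cross-section of $M/G$. The restriction of $\tau$ to the peripheral torus $\partial M$ is an involution with exactly four fixed points---the endpoints on $\partial M$ of the two arcs into which $K$ divides the fixed axis of $\tau$---so the quotient is the pillowcase $2$-orbifold $S^2(2,2,2,2)$. This Euclidean $2$-orbifold has a positive-dimensional Teichm\"uller space of Euclidean structures, inherited from its degree-two torus cover, in contrast to the rigid Euclidean 2-orbifolds $S^2(2,3,6)$, $S^2(2,4,4)$, $S^2(3,3,3)$. Hence the cusp of $M/G$ is flexible, and both hypotheses of Corollary~\ref{maincor} hold. Applying the corollary produces two distinct symmetric essential surfaces in $M$ with different slopes, which is the desired conclusion.

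The main point requiring care is the correct identification of the cusp cross-section of $M/G$: one must know that the axis of a strong inversion intersects a two-bridge knot in exactly two points (not four or more), so that precisely four fixed points appear on the boundary torus and the \emph{flexible} orbifold $S^2(2,2,2,2)$ arises rather than a rigid Euclidean 2-orbifold. Once this is pinned down, the flexibility of $S^2(2,2,2,2)$ and the appeal to Corollary~\ref{maincor} are routine.
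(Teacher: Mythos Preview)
Your argument is correct as an application of Corollary~\ref{maincor}, but it follows a genuinely different route from the paper's proof. The paper takes $G$ to be the \emph{full} orientation-preserving isometry group of $M$ and then shows that $M/G$ has a flexible cusp by an arithmetic argument: the figure-eight knot is treated separately by hand, and for every other hyperbolic two-bridge knot one invokes Reid (non-arithmeticity), Reid--Walsh (no hidden symmetries), and Neumann--Reid (flexible commensurator quotient) to conclude that every orbifold quotient of $M$---in particular $M/G$---has a flexible cusp. You instead take $G=\langle\tau\rangle$ generated by a strong inversion and read off the pillowcase cusp cross-section $S^2(2,2,2,2)$ directly from the geometry of the fixed axis. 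Your approach is more elementary, requires no arithmetic input, and handles the figure-eight knot uniformly rather than as a special case.

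One point deserves a remark. In this paper ``symmetric'' means invariant under the \emph{entire} orientation-preserving isometry group, which for a two-bridge knot has order $4$ or $8$; your $G$ has order~$2$. You are appealing to Corollary~\ref{maincor} exactly as stated (a subgroup $G$ with flexible quotient yields \emph{symmetric} surfaces), so formally you are fine, but notice that the surfaces produced by the machinery of Theorem~\ref{thm:maintheorem} are pullbacks from $M/G$ and hence are a priori only $G$-invariant. The paper's proof sidesteps this by choosing $G$ to be the full group from the outset. If you want your argument to stand independently of that subtlety, either enlarge $G$ to the full symmetry group and verify that the resulting cusp is still a pillowcase (it is, for two-bridge knots), or add a sentence explaining why invariance under the remaining symmetries comes for free.
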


\noindent
In Section~\ref{examplesection}  we  study the symmetric double twist knots, an infinite   family of two-bridge knots. Let $K_n$ be the double twist knot with two twist regions with $n$ full twists, as defined in Section~\ref{examplesection}, and $X(K_n)$ the $\SL_2(\C)$ character variety of its complement. 
We show that every boundary slope is detected by some ideal point on the character variety, and all symmetric slopes are detected by  ideal points on a canonical component.  However, for each $n$, there is a boundary slope not detected on the canonical component.

\begin{restatable}{thm}{symmetricdoubletwistthm}
\label{mainresultTheoremdetectedslopesstrict} Let $K_n$ be a hyperbolic symmetric double twist knot. 
The ideal points on $X(K_n)$  detect the slopes  $0$, $-8n+2$ and $-4n$; these are all of the boundary slopes   in $S^3-K_n$.
The ideal points on the canonical component $X_0(K_n)$  detect slopes $0$ and $-8n+2$;  these are all of the symmetric boundary slopes of $S^3-K_n$.
\end{restatable}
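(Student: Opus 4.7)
The plan has four parts: (i) enumerate the boundary slopes of $K_n$, (ii) determine which of these slopes are symmetric, (iii) apply the main theorem together with an $A$-polynomial computation to pin down detection on $X_0(K_n)$, and (iv) locate an ideal point on a component of $X(K_n)$ other than $X_0(K_n)$ that detects the non-symmetric slope $-4n$.

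For (i), $K_n$ is a two-bridge knot whose fraction has continued-fraction expansion of the form $[2n,2n]$. Running the Hatcher--Thurston edgepath algorithm on the Farey triangle produces precisely three essential surface types, and computing boundary slopes from these edgepaths yields the three numerical values $0$, $-8n+2$, and $-4n$. Since Hatcher--Thurston classifies all essential surfaces in two-bridge knot complements, this is the complete list of boundary slopes.

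For (ii), every symmetric double twist knot admits a $\Z/2$ symmetry $\tau$ exchanging the two twist regions, in addition to the strong inversion shared by all two-bridge knots; the orbifold quotient by the resulting group $G$ is a two-bridge link orbifold with flexible cusp, so Corollary 1.3 applies to $X_0(K_n)$. Inspecting the Hatcher--Thurston surfaces one sees that the surfaces of slopes $0$ and $-8n+2$ are $\tau$-invariant up to isotopy, while the slope $-4n$ surface is carried by $\tau$ to a distinct essential surface (its ``mirror'' built from the opposite twist region). Hence the symmetric boundary slopes are exactly $\{0,-8n+2\}$, and by Corollary 1.3 no ideal point of $X_0(K_n)$ can detect the slope $-4n$. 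For the converse direction on $X_0$, compute the $A$-polynomial of $K_n$ explicitly (these two-bridge knots admit a tractable Riley-type presentation that gives a uniform family of $A$-polynomials in $n$) and verify that its Newton polygon has sides of slopes $0$ and $-8n+2$; by the CCGLS theorem these two slopes are detected at ideal points of $X_0(K_n)$.

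For (iv), observe that $\tau$ induces an automorphism of $X(K_n)$, under which the Riley polynomial splits into a $\tau$-invariant factor (cutting out $X_0(K_n)$) and a $\tau$-anti-invariant factor cutting out a further non-canonical component $X_1 \subset X(K_n)$. On $X_1$ we exhibit an ideal point whose associated discrete valuation, via the Culler--Shalen construction, produces an essential surface of slope $-4n$; equivalently, the Newton polygon of the full $A$-polynomial of $K_n$, once the $X_1$ factor is included, gains a side of slope $-4n$. Combined with (i)--(iii), this gives the two claims of the theorem.

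The main obstacle is the computation in (iv): uniformly in $n$, one must show that the Riley polynomial of $K_n$ really does factor across the whole family (so that $X(K_n)$ genuinely strictly contains $X_0(K_n)\cup X_{\mathrm{ab}}$) and then extract the new Newton polygon side of slope $-4n$ from the non-canonical factor. Parts (i)--(iii) reduce to Farey-complex combinatorics and standard $A$-polynomial manipulations, but locating a non-canonical ideal point detecting the asymmetric slope requires a careful analysis of the $\tau$-twisted representations.
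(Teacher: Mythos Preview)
Your overall architecture matches the paper's---enumerate boundary slopes via Hatcher--Thurston, identify the non-symmetric one via the $4$-plat flip, then invoke the main theorem---but steps (iii) and (iv) take a harder route than the paper, and (iii) has a gap as written.

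In (iii), computing the full $A$-polynomial of $K_n$ and reading off Newton-polygon sides of slope $0$ and $-8n+2$ only tells you these slopes are detected somewhere on $X(K_n)$; by CCGLS the Newton-polygon sides correspond to ideal points of the whole character variety, not specifically of $X_0(K_n)$. To conclude detection on $X_0$ you would first have to isolate the factor of the $A$-polynomial coming from $X_0$, which is precisely the uniform-in-$n$ factorization you flag as the main obstacle in (iv). The paper avoids this entirely: it uses the general fact that a canonical component of a one-cusped hyperbolic manifold always detects at least two boundary slopes (each $I_\gamma$ is non-constant on $X_0$, so the image under the eigenvalue map has a genuinely two-dimensional Newton polygon). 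Since the main theorem rules out $-4n$ on $X_0$, the only possibility is that $X_0$ detects $\{0,-8n+2\}$. No explicit $A$-polynomial is computed.

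In (iv), you do not need to pin $-4n$ to a specific non-canonical component. The theorem only asserts that $X(K_n)$ as a whole detects all three slopes, and for two-bridge knots this is an immediate consequence of Ohtsuki's bijection between ideal points of the irreducible $\PSL_2(\C)$ character variety and tuples indexed by the continued-fraction expansions of $p/q$. Your proposed route---factor the Riley polynomial uniformly in $n$ and extract a side of slope $-4n$ from the anti-invariant factor---proves more than is required, and the paper itself records only a computer check of that stronger statement for $n\le 10$ in a closing remark, not a proof.

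One smaller point: for the flexible-cusp hypothesis in (ii) the paper argues via Reid's arithmeticity theorem (the figure-eight is the only arithmetic knot), Reid--Walsh on hidden symmetries of two-bridge knots, and the Neumann--Reid criterion relating hidden symmetries to rigid cusps. Your assertion that the quotient is ``a two-bridge link orbifold with flexible cusp'' should be backed by an argument along these lines.
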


\subsection{Organization}

In Section~\ref{section:orbifolds} we recall the definition and construction of an orbifold and the orbifold fundamental group.  We collect facts about 3-orbifolds and  suborbifolds.  In Section~\ref{actionontreesection} we prove Theorem~\ref{orbifoldTreeAction} and then prove Lemma~\ref{suborbliftinglemma}, which shows that we can lift a detected $2$-suborbifold in $M/G$ to an essential surface in $M$.  We review the construction of the $\SL_2(\C)$ and $\PSL_2(\C)$ character varieties in Section~\ref{section:charactervarieties}.  We also prove Theorem~\ref{thm:symmetiesandcanonicalcomponent} which says that a canonical component is fixed by the induced action of a symmetry.  In Section~\ref{section:inducedmaps}  we  discuss induced maps on the character variety.  We also discuss $\SL_2(\C)$ and $\PSL_2(\C)$ Culler Shalen theory for manifolds and orbifolds in Section~\ref{section:PSLOrbs}.   In Section~\ref{mainsection} we use this and  Theorem~\ref{orbifoldTreeAction} to prove  Theorem~\ref{thm:maintheorem}. Section~\ref{examplesection}  is devoted to  the double twist knots.

\section{Orbifolds }\label{section:orbifolds}

 We refer the reader to \cite{MR1778789} for a comprehensive treatment of orbifolds.    Associated to the orbifold $Q$, we let $X_Q$ denote the {\em underlying space}, and $\Sigma(Q)$  \ be the {\em singular locus} of $Q$. The singular locus consists of the points  $x\in X_Q$ with non-trivial {\em local group} $G_x$.

An \textit{orbifold covering map} $p:P\to Q$ is a continuous map of underlying spaces from $X_P$ to $X_Q$ with the following property.   Each point $x\in X_Q$ has a neighborhood $U$ which necessarily is diffeomorphic to $\tilde{U}/D$ for some affine patch $\tilde{U}$ and finite group $D$ of diffeomorphisms of $\tilde{U}$.  We require that   each component of $p^{-1}(U)$ is isomorphic to $\tilde{U}/D'$ for some $D'<D$.
Every orbifold $Q$ has a universal covering $p:\tilde{Q}\to Q$ such that for every orbifold covering map $f:X\to Q$ there is an orbifold covering map $p':\tilde{Q}\to P$ such that $f\circ p':\tilde{Q}\to Q$ is also an orbifold covering map.
The \textit{orbifold fundamental group}, $\pi_1^{orb}(Q)$, is the group of deck transformations of the orbifold universal cover $\tilde{Q}$.
  An orbifold is  \textit{(very) good} if it is (finitely) covered by a manifold.  Otherwise, it is  \textit{bad}.

 If $M$ is a manifold and a group $G$ acts on $M$ properly discontinuously, then $M/G$ has an induced orbifold structure.  
 In the context of  Theorem~\ref{thm:maintheorem}  we will consider  orbifolds which are the quotient of compact, orientable, irreducible 3-manifolds by a finite symmetry group. All such orbifolds are themselves compact, orientable, and orbifold irreducible. Therefore, we will restrict our attention to orbifolds that are compact, orientable, and orbifold irreducible.
For our application to  Theorem~\ref{thm:maintheorem} we only need to consider good orbifolds, but the proof of Theorem~\ref{orbifoldTreeAction} does not require the orbifold to be good.

The assumption that the orbifold $Q$ is orientable gives a useful description of the singular set.
\begin{thm}\cite[Theorem 2.5]{MR1778789}\label{thm:singularset}
Let $Q$ be a orientable $3$-orbifold.  Then the underlying space $X_Q$ is an orientable 3-manifold, and the singular $\Sigma(Q)$  set consists of a trivalent graph where  the three edges at any vertex have orders $(2,2,k)$, $(2,3,3)$, $(2,3,4)$, or $(2,3,5)$.
\end{thm}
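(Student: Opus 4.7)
The plan is to work locally and reduce everything to the classification of finite subgroups of $\mathrm{SO}(3)$. At each point $x \in X_Q$ the orbifold structure gives a neighborhood modeled on $\R^3/G_x$ for a finite group $G_x$ of diffeomorphisms fixing the origin; after averaging a Riemannian metric over $G_x$ we may assume $G_x$ acts linearly, so orientability of $Q$ forces $G_x$ to be a finite subgroup of $\mathrm{SO}(3)$. The classical classification then gives only five families: the trivial group, the cyclic groups $C_n$, the dihedral groups $D_n$ (for $n \geq 2$), the tetrahedral group $T \cong A_4$, the octahedral group $O \cong S_4$, and the icosahedral group $I \cong A_5$. The entire statement thus reduces to a case-by-case analysis of $\R^3/G$ and of its singular locus.

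For the claim that $X_Q$ is a manifold, I would verify in each case that $\R^3/G \cong \R^3$ as a topological space. The standard way is to restrict the action to $S^2$ and observe that $S^2/G$ is a closed $2$-orbifold whose underlying space has positive orbifold Euler characteristic, hence a $2$-sphere; one can also see this directly via a spherical fundamental domain — a geodesic triangle for $T, O, I$ and a digon or sector for $C_n, D_n$. Coning at the origin then upgrades the homeomorphism $S^2/G \cong S^2$ to $\R^3/G \cong \R^3$.

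The remaining work is to analyze the image in $\R^3/G$ of the fixed-point set of $G \setminus \{1\}$. For $C_n$ the singular set is the rotation axis, contributing a smooth edge of order $n$ with no vertex. For $D_n$ the $n$-fold axis is flipped by any half-turn and so descends to a single ray labeled $n$, while the $n$ perpendicular $2$-fold axes are cyclically permuted by $C_n$ and collapse to a single line through the origin, contributing two edges labeled $2$; the trivalent vertex has type $(2,2,n)$. For $T$ the three mutually perpendicular $2$-fold axes are reversed by one another and contribute one ray labeled $2$, while each $3$-fold axis has no element of $A_4$ reversing it (no half-turn passes through a tetrahedron vertex), so both of its ends survive as distinct edges labeled $3$, giving the vertex type $(2,3,3)$. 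For $O$ and $I$ every rotation axis is reversed by a perpendicular half-turn in $G$, so each axis contributes a single edge, yielding the trivalent vertex types $(2,3,4)$ and $(2,3,5)$ respectively.

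The principal obstacle is precisely this last bookkeeping: deciding whether the two ends of a given rotation axis are identified in the quotient. The verification is short but error-prone, since it hinges on the presence inside $G$ of a perpendicular involution, and the tetrahedral case shows this genuinely can fail. Once the local pictures are settled, gluing them across $Q$ is immediate: the local models assemble into an orientable $3$-manifold $X_Q$ with an embedded trivalent graph $\Sigma(Q)$ whose vertices carry exactly one of the four permitted triples.
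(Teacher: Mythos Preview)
The paper does not prove this statement; it is quoted verbatim as Theorem~2.5 of the cited reference \cite{MR1778789} and used as a black box throughout. So there is nothing to compare your argument against.

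That said, your sketch is the standard proof and is correct. The reduction to finite subgroups of $\mathrm{SO}(3)$ via an invariant metric, the identification $S^2/G \cong S^2$ (hence $\R^3/G \cong \R^3$), and the case-by-case orbit analysis of axis endpoints are exactly how this is done in the literature. Your treatment of the tetrahedral case, where the $3$-fold axes are \emph{not} reversed and so contribute two distinct edges, is the one place people commonly slip, and you have it right.

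Two small points you may want to tighten. First, you establish that $X_Q$ is locally $\R^3$, hence a manifold, but you do not explicitly argue orientability of $X_Q$; this needs the observation that the orbifold transition maps, being orientation-preserving on the chart covers, descend to orientation-preserving homeomorphisms of the quotients. Second, in the dihedral case your phrase ``collapse to a single line through the origin'' is slightly loose when $n$ is even, since the $n$ secondary axes then fall into two $D_n$-conjugacy classes; the conclusion (two order-$2$ rays) is still correct, but the mechanism is that the $2n$ equatorial endpoints form two $D_n$-orbits rather than that the axes themselves are all identified.
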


The 2-dimensional orbifolds which
are 2-spheres with 3 cone points and cone angles $(\pi, \pi/2, \pi/2)$, $(\pi, 2\pi /3, \pi/3)$ and
$(2\pi/3, 2\pi /3, 2\pi /3)$ are the Euclidean  turnovers $S^2(2,4,4)$, $S^2(2,3,6)$, and $S^2(3,3,3)$, respectively.
The 2-dimensional orbifold which is 2-sphere with 4 cone points all of cone angle $\pi$ is the pillowcase, $S^2(2, 2, 2, 2)$.
If $X$ is an orientable, non-compact finite volume hyperbolic 3-orbifold, then a cusp
of $X$ has the form $Q \times [0, \infty)$, where $Q$ is a Euclidean orbifold.  In fact (see \cite{MR1291531}) $Q$ is either a torus, a pillowcase or a turnover. 
The cusp is said to be
{\em rigid} if $Q$ is a Euclidean turnover, otherwise it is called {\em flexible}.

\subsection{Suborbifolds}

We begin with the manifold setting.  Let $M$ denote an orientable compact, irreducible, hyperbolic 3-manifold.  
Let $F$ be a (not necessarily connected) surface in $M$.
A {\em bicollaring} of  $F$ is a homeomorphism $h$ of $F\times  [ -1,1 ]$ onto a neighborhood of $F$ in $M$ such that $h(x,0)=x$ for every $x\in F$ and $h(F\times [ -1,1 ])\cap \partial M=h(\partial F\times [ -1,1 ])$. A surface $F$ is \textit{bicollared} if $F$ admits a bicollaring.  
A surface $F$ in $M$ is {\em  boundary parallel}  if there exist a homotopy of $F$ that takes $F$ into the boundary of $M$.

A surface $F$ is  \textit{essential} if it has the following properties:
$F$ is bicollared;
every component of $F$ is $\pi_1$-injective in $M$;
no component of $F$ is a $2$-sphere;
no component of $F$ is boundary parallel; and
$F$ is nonempty.

In the orbifold setting we have the following.
A  $2$-suborbifold $F$ in a $3$-orbifold $Q$ is \textit{bicollared} if $X_F$ is a bicollared as an embedded surface in the manifold $X_Q$.
 An \textit{orbifold disc} (or \textit{ball}) is the quotient of a $2$-disc (or $3$-ball) by a finite group.
 A {\em spherical} 2-orbifold is a quotient of a 2-sphere by a finite group. 
A $2$-suborbifold $F$ in a $3$-orbifold $Q$ is called \textit{orbifold incompressible} if for any component $P$ of $F$, $P$ has the following properties, where $\chi$ is the orbifold Euler characteristic (see \cite{MR1778789}):
\begin{itemize}
\item[$(a)$]  $\chi(P)>0$ and    $P$ doesn't bound an orbifold ball in $Q$. \\
 and
\item[$(b)$]  $ \chi(P)\leq 0$ and  any $1$-suborbifold in $P$ that bounds a orbifold disc in $Q-P$ also bounds an orbifold disc in $P$.
\end{itemize}
 We will call incompressible $2$-suborbifold \textit{essential} if it 
  is bicollared and has no boundary parallel or spherical components.
 We call a   $3$-orbifold  \textit{orbifold irreducible} if it contains no bad $2$-suborbifold or essential spherical $2$-suborbifold.

\section{ Dual $2$-suborbifolds to Actions on Trees}\label{actionontreesection}

The action of a group $\Gamma$ on a tree $T$ is said to be {\em trivial} if there is a vertex of $T$ fixed by the entire group $\Gamma$, otherwise the action is called non-trivial.  The group $\Gamma$ acts {\em without inversions} if $\Gamma$ does not reverse the orientation of any invariant edge. 
We will now assume that  $Q$ is a compact, orientable, and orbifold-irreducible $3$-orbifold  and  $\pi_1^{orb}(Q)$ acts non-trivially and without inversions on a tree $T$.
Our proof of Theorem~\ref{orbifoldTreeAction}  is modeled the work in  \cite{MR942518},  \cite{MR881270} and \cite{MR1886685} for surfaces in 3-manifolds, and uses   ideas from \cite{MR1065604}.

We now  outline  the proof.  First, we   make the action simplicial. In Lemma~\ref{treeproof1}  we obtain  a triangulation of the orbifold $Q$ that lifts to a triangulation of the universal orbifold cover $\tilde{Q}$.  In Lemma~\ref{treeproof2} we use this triangulation to construct from $\tilde{Q}$ a $\pi_1^{orb}(Q)$-equivariant, simplicial map $\tilde{f}$ from $\tilde{Q}$ into $T$.   For  $E$ the set of midpoints of edges in $T$, we look at the sets $\tilde{F}=\tilde{f}^{-1}(E)\subset \tilde{Q}$ and $F=p(\tilde{F}) \subset Q$.  We show that they are bicollared, and after altering $\tilde{f}$ by homotopy  show that we can make $F$ an essential $2$-suborbifold.  We will also show that we can take the suborbifold to be transverse to the singular set, which   will allow us to lift the suborbifold to  an essential surface  in a covering space.


Let $\tilde{Q}$ be the universal covering space of $Q$ with covering map $p:\tilde{Q}\rightarrow Q$.

\begin{lemma}\label{treeproof1}
There is a  triangulation $\Delta$ of $X_Q$ such that the singular set of $Q$ is contained in the $1$-skeleton of $\Delta$.  This triangulation lifts to a triangulation $\tilde{\Delta}$ of $\tilde{Q}$.

\end{lemma}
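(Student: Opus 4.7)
The plan is to build $\Delta$ in two stages: first triangulate $X_Q$ compatibly with $\Sigma(Q)$, then refine so that the triangulation lifts through the orbifold covering $p:\tilde{Q}\to Q$.

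By Theorem~\ref{thm:singularset}, $X_Q$ is a compact orientable $3$-manifold and $\Sigma(Q)$ is a trivalent graph properly embedded in $X_Q$. Every compact $3$-manifold admits an (essentially unique) PL structure by Moise's theorem, and any tame embedded graph in a PL $3$-manifold can be realized as a subcomplex of some triangulation. Concretely, I would first triangulate the graph $\Sigma(Q)$ itself by placing $0$-simplices at its trivalent vertices and $1$-simplices along its edges, and then extend this simplicial structure to all of $X_Q$ using a standard PL extension argument. This produces a triangulation $\Delta_0$ of $X_Q$ with $\Sigma(Q)\subset \Delta_0^{(1)}$.

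Next, I would iterate barycentric subdivision to obtain a triangulation $\Delta$ refining $\Delta_0$ with the additional property that each closed simplex $\sigma$ of $\Delta$ is contained in the underlying space of some orbifold chart $U=\tilde{U}/G_x$ that is evenly covered by $p$ (in the orbifold sense). Such a refinement exists because the orbifold charts form an open cover of the compact space $X_Q$ and hence have a positive Lebesgue number. Inside any such chart, the preimage $p^{-1}(U)$ is a disjoint union of components, each of the form $\tilde{U}/H$ for some $H\le G_x$, and since $\sigma$ meets $\Sigma(Q)$ only in its $1$-skeleton, $p$ restricts on each component of $p^{-1}(\sigma)$ to a homeomorphism onto $\sigma$. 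Pulling each simplex back through each component of its chart's preimage gives the candidate lifted simplicial decomposition $\tilde{\Delta}$ on $X_{\tilde{Q}}$, and since $\Sigma(\tilde{Q})\subset p^{-1}(\Sigma(Q))$, the singular set of $\tilde Q$ lies in $\tilde{\Delta}^{(1)}$.

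The main obstacle is the global compatibility check: verifying that the locally-defined lifts assemble into an honest triangulation of $X_{\tilde{Q}}$ rather than merely a chart-by-chart description. The key issue arises along simplices of $\Delta$ that meet $\Sigma(Q)$, where several lifts may share a common face and one must show that the local-group identifications in adjacent charts agree on overlaps. Using the classification of local groups provided by Theorem~\ref{thm:singularset} (dihedral, tetrahedral, octahedral, or icosahedral types), this reduces to a finite local calculation in each type: lifts of adjacent simplices differ by an element of the local group, and these local group elements are consistent with the $\pi_1^{\mathrm{orb}}(Q)$-action on $\tilde{Q}$. Once this compatibility is checked, $\tilde{\Delta}$ is a triangulation of $\tilde{Q}$ with the desired properties.
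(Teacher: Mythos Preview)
Your construction of $\Delta_0$ with $\Sigma(Q)\subset\Delta_0^{(1)}$ is valid and takes a different route from the paper: rather than starting from an arbitrary triangulation and pushing $\Sigma(Q)$ into the $1$-skeleton by transversality moves and repeated barycentric subdivision, you invoke Moise's theorem together with the standard PL fact that a tame graph can be realised as a subcomplex. Both arguments work; yours is shorter at the cost of citing heavier machinery, while the paper's is more self-contained.

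The lifting step, however, contains a genuine error. Your claim that ``$p$ restricts on each component of $p^{-1}(\sigma)$ to a homeomorphism onto $\sigma$'' is false whenever the closed simplex $\sigma$ meets $\Sigma(Q)$. If $\sigma$ is a $3$-simplex with an edge $e$ lying in $\Sigma(Q)$ with local group $\Z/k$, then in the chart $\tilde U\to \tilde U/(\Z/k)$ the preimage of $\sigma$ consists of $k$ copies of $\sigma$ glued along the single lifted edge $\tilde e$; this set is connected and $p$ is $k$-to-one on it. Your later remark that ``several lifts may share a common face'' is precisely this phenomenon and contradicts the earlier sentence, so the argument as written is internally inconsistent. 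The correct reason the triangulation lifts is that the condition $\Sigma(Q)\subset\Delta^{(1)}$ forces every local group $G_x$ to act \emph{simplicially} on its chart $\tilde U$: the fixed set of each nontrivial element is a subcomplex, so $G_x$ permutes the simplices of the induced triangulation of $\tilde U$. Equivalently, open cells of dimension $\geq 2$ miss $\Sigma(Q)$ and lift as in an honest covering, while over the $1$-skeleton the branching simply increases the number of top-dimensional simplices incident to each lifted edge or vertex. No Lebesgue-number refinement or chart-overlap calculation is needed. The paper itself records only the one-line conclusion (``as a result, this triangulation will lift''), so your instinct to supply detail was sound---it just needs to be this detail rather than the homeomorphism claim.
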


\begin{proof}

 Let $\Delta$ be any triangulation of $X_Q$, the underlying space of $Q$.  We will   show that we can perform homotopy and barycentric subdivision   to obtain a triangulation $\Delta'$ of $X_Q$ with the property that the singular set of $Q$ is contained in the $1$-skeleton of $\Delta'$.  As a result,  this triangulation $\Delta'$ will lift to a triangulation of $\tilde{Q}$. 
 
By Theorem~\ref{thm:singularset} due to \cite{MR1778789} the singular set of $Q$ is a finite trivalent graph.  Therefore,  we can perform  a homotopy on $\Delta$ to make the singular set transverse to every $2$-cell.  We   then perform barycentric subdivision on the triangulation   until the interior of every $3$-cell   contains at most $1$ vertex of the singular set of $Q$ and every $2$-cell intersects the singular set at most once (while maintaining the transverse property). 
 Therefore,  any singular set that intersects the interior of a $3$-cell either intersects as a line segment  with endpoints on different faces, or as three line segments meeting at a vertex (again with endpoints on different faces).  We now can perform barycentric subdivision    so that   the singular set is  contained in the new $1$ and $2$ cells of the triangulation.  After homotopy, it is contained in the $1$-skeleton of the barycentric subdivision.  
\end{proof}


Given a triangulation $\Delta$ as in Lemma~\ref{treeproof1}, let  $\tilde{\Delta}^i$ represent the $i$-skeleton of $\tilde{\Delta}$.   Let $S^i$ be a complete system of orbit representatives for the action of $\pi_1^{orb}(Q)$ on $\tilde{\Delta}^i$. 
 For each  $s\in S^0$, define the subgroup stabilizing $s$ as  
\[ \Gamma_s=\{\gamma\in\pi_1^{orb}(Q) \mid  \gamma \cdot s = s \}.\] 
Then $\Gamma_s$ is a subgroup of the local group $G_{p(s)}$, and is a finite group.   If $p(s)$ is not in the singular locus of $Q$, then $\Gamma_s$ is trivial.  
 Let $E$ denote the set of midpoints of edges of $T$.

We begin with a map $h_0:S^0\rightarrow T^0$ with the property that $h_0(s)$ is stabilized by $\Gamma_s$.  This local group is finite and so there is at least one vertex in $T$ stabilized by the whole group. Therefore the condition that $h_0(s)$ is stabilized by $\Gamma_s$ can be easily achieved. 
For example, when  $ Q$ is hyperbolic  then the  space $\tilde{Q}=\H$ is contractible,  so we can embed $T$ in $\tilde{Q}$ and take $h:\tilde{Q}\rightarrow T$ to be a deformation retraction. Since $T$ is necessarily an infinite tree, in practice, we can  alter $h$ by homotopy so that $h_0$ sends $S^0$ to $T^0$.

\begin{lemma}\label{treeproof2}
Given a map $h_0:S^0\rightarrow T^0$ such that for each point $s\in S^0,$  $h_0(s)$ is stabilized by $\Gamma_s$,   there is a $\pi_1^{orb}(Q)$-equivariant map $\tilde{f}:\tilde{Q}\rightarrow T$ that restricts to $h_0$ and is transverse to $E$.

\end{lemma}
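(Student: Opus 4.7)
The plan is a standard equivariant obstruction-theoretic extension of $h_0$ over the skeleta of $\tilde\Delta$, using at each stage that $T$ is a tree so its subtrees are contractible. I would begin by performing additional barycentric subdivisions of the triangulation from Lemma~\ref{treeproof1} (lifted to $\tilde\Delta$) if necessary, so that whenever an element of $\pi_1^{orb}(Q)$ stabilizes a cell of $\tilde\Delta$ setwise it fixes that cell pointwise. This is possible because each cell stabilizer is a subgroup of a finite local group. With this arrangement the extension over $\tilde\Delta^0$ is forced by equivariance: set $\tilde f(\gamma\cdot s)=\gamma\cdot h_0(s)$ for $s\in S^0$ and $\gamma\in\pi_1^{orb}(Q)$. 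This is well defined because if $\gamma_1\cdot s=\gamma_2\cdot s$, then $\gamma_2^{-1}\gamma_1\in\Gamma_s$, which stabilizes $h_0(s)$ by hypothesis.

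Next, for each edge $e\in S^1$ with endpoints $v_0,v_1$, I would map $e$ homeomorphically onto the unique geodesic segment in $T$ from $\tilde f(v_0)$ to $\tilde f(v_1)$ (collapsed to a point if the endpoint images coincide), and translate equivariantly over the orbit of $e$. This is consistent because $\Gamma_e\subseteq\Gamma_{v_0}\cap\Gamma_{v_1}$ fixes both endpoint images; the fixed-point set of any finite group acting on a tree is a convex subtree, so it contains the joining geodesic. For a cell $\sigma$ of dimension $2$ or $3$, the map on $\partial\sigma$ already lands in the fixed-point subtree of $\Gamma_\sigma$, which is contractible, so $\tilde f$ extends continuously over $\sigma$ into that subtree; I would then translate equivariantly over the orbit of $\sigma$ to obtain the global extension.

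To achieve transversality to the midpoint set $E\subset T$, I would arrange each cell extension in the previous paragraph to be piecewise linear with respect to sufficiently fine subdivisions of the cells of $\tilde\Delta$ and of $T$, and then perform a small $\pi_1^{orb}(Q)$-equivariant perturbation to displace any subdivision vertex whose image happens to lie in $E$. Since $E$ is discrete in $T$ and disjoint from $T^0$, this is a generic condition, and the resulting $\tilde f$ meets each edge midpoint transversely, with $\tilde f^{-1}(E)$ intersecting each open $3$-cell in an embedded $2$-manifold. The main obstacle is the preliminary subdivision step: if some element of $\pi_1^{orb}(Q)$ stabilized a $1$-cell $e$ while swapping its endpoints, the geodesic between the endpoint images in $T$ could have odd combinatorial length, and any equivariant extension would force an inversion of a middle edge of $T$, contradicting the without-inversions hypothesis. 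Two barycentric subdivisions of $\tilde\Delta$ eliminate this possibility by forcing every finite cell stabilizer to fix its cell pointwise, after which the rest of the argument is formal.
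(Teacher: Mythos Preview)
Your proof is correct and follows essentially the same skeleton-by-skeleton equivariant extension strategy as the paper, using contractibility of $T$ (and of its subtrees) to fill in cells and then arranging transversality at the end. The one genuine variation is your preliminary barycentric subdivision forcing every cell stabilizer to fix its cell pointwise; the paper skips this and instead argues the endpoint-swapping case for $1$-cells directly from the without-inversions hypothesis, while for $2$- and $3$-cells it uses that interior points lie over non-singular points and hence have trivial stabilizer---your device makes the well-definedness checks uniform and arguably cleaner, but the underlying argument is the same.
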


The proof will proceed as follows. 
We will  define a continuous and  $\pi_1^{orb}$-equivariant map $\tilde{f_i}$ from the $i$-skeleton of $\tilde{Q}$ to $T$ and then extend it to the $(i+1)$-skeleton until we have a map $\tilde{f}=\tilde{f}_3$ defined $f$ on $\tilde{Q}$. The final equivariant  map $\tilde{f}$ is not unique as many  choices are involved in the construction of this map.

\begin{proof}
We begin by constructing $\tilde{f}_0$. The set  $S^0\subset \tilde{\Delta}^0\subset \tilde{Q}$ and every orbit for the action of $\pi_1^{orb}$ on $\tilde{\Delta}^0$ intersects $S^0$ in precisely one point. 
Because the action of any finite group on a tree is trivial, $\Gamma_s$ acts trivially on $T$ and   fixes at  least one  vertex of $T$.

Given $s'\in \tilde{\Delta}^0-S^0$  then $s'=\gamma\cdot s$ for some $\gamma \in \pi_1^{orb}$  and we extend $h_0$ by defining  $\tilde{f}_0(s')=\gamma\cdot h^0(s)$. 
Given such an $s'$, if $s'=\gamma \cdot s = \gamma' \cdot s$ then $\gamma^{-1}   \gamma' \cdot s = s$ and so $ \gamma^{-1}   \gamma' \in \Gamma_s <G_{p(s)}$.  By construction, $h_0(s)$ is fixed by $\gamma^{-1}  \gamma' $.  From this it follows that $\gamma\cdot h_0(s)=\gamma' \cdot h_0(s)$ and indeed $\tilde{f}_0$ is well-defined. 
The map $\tilde{f}_0$ is unique because the $\pi_1^{orb}$ equivariant condition necessitates that $\tilde{f}_0(\gamma \cdot s)= \gamma \cdot h_0(s)$. 
We conclude that $h_0$ can be uniquely extended to a $\pi_1$-equivariant map $\tilde{f}_0$ from $\tilde{\Delta}^0$ to $T^0$.

Next we extend the  map $\tilde{f}_0$   to  $\tilde{f}_1: \tilde{\Delta}^1 \rightarrow T$.  We start by  choosing $S^1$.   For each  $1$-simplex $\sigma\in S^1$ let $h_{\sigma}:\sigma\to T$ be a continuous map that agrees with $\tilde{f}_0$ on $\partial\sigma$ and maps into the unique line segment in $T$ connecting the two points of $\tilde{f}_0(\partial\sigma)$. If both endpoints of $\sigma$ are mapped to the same vertex of $T$ by $\tilde{f}_0$, then $h_{\sigma}$ also maps every point in $\sigma$ to that vertex. Let $\tilde{f}_1$ be the unique $\pi_1^{orb}$-equivariant map from $S^1$ into $T$ given by 
\[ \tilde{f}_1(\gamma \cdot s) = \gamma \cdot h_{\sigma}(s)\]
for all $\gamma\in\pi^{orb}_1(Q)$, $\sigma\in S^1$, and $s\in\sigma$.
Note that this results in a simplicial map.

Again we need to check that this function is well-defined.  It agrees with $\tilde{f}_0$ on the boundary of every simplex, so it is enough to consider $s' \in \tilde{\Delta}^1-S^1$.   Assume that $s' = \gamma \cdot s = \gamma'' \cdot s''$ for some $s\in \sigma,$ a $1$-simplex in $\tilde{\Delta}^1$ and $s'' \in \sigma'',$ a $1$-simplex in $\tilde{\Delta}^1$. Then $\sigma = \sigma''$ because $S^1$ is a complete system of orbit representatives, and we have  $ \gamma \cdot s = \gamma '' \cdot s''$ so that $s = \gamma^{-1} \gamma'' \cdot s''$.  It follows that $\gamma^{-1} \gamma''$ sends $\sigma$ to itself. 
First, consider the case when $s\neq s''$.  Then the action of $\gamma^{-1} \gamma''$ on $\tilde{\Delta}^1$ interchanges the endpoints of $\sigma$, so the action of $\gamma^{-1} \gamma''$ must do so as well under $\tilde{f}_0$ since it is a $\pi_1^{orb}$-equivariant map.  Because $\pi_1^{orb}(Q)$ acts on $T$ without inversions,  both endpoints of $\sigma$ must be mapped to the same point, $x$,  in $T$, and so all of $\sigma$ is mapped to $x$  and necessarily $\gamma^{-1} \gamma''$ fixes $x$.  Therefore,  considering $s'=\gamma \cdot s$ we have 
\[ \tilde{f}_1(s') = \tilde{f}_1( \gamma \cdot s) = \gamma \cdot h_{\sigma}(s) = \gamma \cdot x.\] 
Similarly, with $s'= \gamma'' \cdot s''$ we have $\tilde{f}_1(s') = \gamma'' \cdot x$, which equals $\gamma \cdot x$ since $\gamma^{-1} \gamma ''$ fixes $x$. 
It now suffices to consider the case when $s=s''$, so that $s' = \gamma \cdot s = \gamma'' \cdot s$ and $\gamma^{-1} \gamma '' $ fixes $s$.   Since the action is without inversions, it cannot be the case that $\gamma^{-1} \gamma ''$ fixes $s$ only and interchanges the endpoints of the $1$-simplex $\sigma$,  so $\gamma^{-1} \gamma ''$ fixes $\sigma$ point-wise.  Therefore, $h_{\sigma}$ is the identity.  It follows that $\gamma \cdot h_{\sigma}(s)= \gamma'' \cdot h_{\sigma}(s)$, and so $\tilde{f}_1(s')$ is well-defined.

Now we extend this map to  $\tilde{f}_2:\tilde{\Delta}^2\to T$.    For any simplex $\sigma\in S^2$ we have a map $\tilde{f}_2|_{\partial\sigma}:\partial\sigma\to T$ by restricting to $\tilde{f}_1$.  Because $T$ is contractible, this map can be continuously extended to a map $h_{\sigma}:\sigma\to T$. 
  Now let the map $\tilde{f}_2:\tilde{\Delta}^2\to T$ be defined as 
  \[ \tilde{f}_2(\gamma \cdot s)= \gamma \cdot  h_{\sigma}(s)\text{ for all $\gamma\in\pi_1^{orb}(Q)$, $\sigma\in S^2$, and $s\in\sigma$.}\]
We have already shown that this is well-defined on the boundaries of every simplex.  Because every point in the interior of a simplex is not a lift of a singular point (those are all in the $1$-skeleton by construction) and $\pi_1^{orb}(Q)$ acts freely on all points of $\tilde{Q}$ that are not lifts of singular points, $\tilde{f}_2$ is well-defined.  By the simplicial approximation theorem \cite{MR1325242} $\tilde{f}_2$ can  be made simplicial.

The construction of $\tilde{f}=\tilde{f}_3$ is identical to that of $\tilde{f}_2$.    Note that $\tilde{f}$ is $\pi_1$-equivariant by construction.
We can perform a homotopy to ensure the transverse condition.
 \end{proof}


The next step is to use the map $\tilde{f}$ to construct a bicollared $2$-suborbifold in $Q$.  We will assume that we have a triangulation of $\tilde{Q}$ that is $\pi_1^{orb}(Q)$ invariant so that $\tilde{f}$ is simplicial.    
Let $\tilde{f}$  be as in Lemma~\ref{treeproof2}, and let $\tilde{F}=\tilde{f}^{-1}(E)\subset \tilde{Q}$, and $F=p(\tilde{F})\subset Q$.

\begin{lemma}\label{lemma:bicollared}
Consider an action of $\pi_1^{orb}(Q)$ on a tree $T$ that is non-trivial and without inversions.
The set $\tilde{F}$ is a two sided and bicollared surface in $\tilde{Q}$, and the set $F$ is a two sided and bicollared $2$-suborbifold in $Q$. 
\end{lemma}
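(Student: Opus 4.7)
My plan is to analyze $\tilde{F}$ using the local simplicial structure provided by Lemma~\ref{treeproof1} and Lemma~\ref{treeproof2}, and then descend to $F=p(\tilde{F})$ using $\pi_1^{orb}(Q)$-invariance together with the hypothesis that the action is without inversions.

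First I would establish the structure of $\tilde{F}$ in $\tilde{Q}$ locally. Since $\tilde{f}$ is simplicial and $T$ is a $1$-complex, each simplex $\sigma$ of $\tilde{\Delta}$ is sent either to a vertex of $T$ or to a single edge. In the first case $\tilde{F}\cap\sigma=\emptyset$. In the second case, writing $\tilde{f}(\sigma)=e$ with midpoint $m\in E$, the vertices of $\sigma$ are partitioned according to which endpoint of $e$ they map to, and $\tilde{F}\cap\sigma=\tilde{f}^{-1}(m)\cap\sigma$ is the standard PL dual hyperplane separating these two subsets. Gluing these local pieces along shared faces yields a properly embedded PL surface $\tilde{F}$ in $\tilde{Q}$. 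Transversality of $\tilde{f}$ to $E$ then produces a bicollar: a small open interval neighborhood of $m$ in $e$ pulls back via $\tilde{f}$ to a product neighborhood $\tilde{F}_m\times(-\varepsilon,\varepsilon)$ of the corresponding component $\tilde{F}_m=\tilde{f}^{-1}(m)$, with the two factors indexed by the two components of $e\setminus\{m\}$. Two-sidedness of $\tilde{F}$ in $\tilde{Q}$ follows immediately from this labeling of the two sides by the two halves of $e$.

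Next I would show $\tilde{F}$ is $\pi_1^{orb}(Q)$-invariant, so that $F=p(\tilde{F})$ makes sense. Equivariance of $\tilde{f}$ reduces this to showing $\gamma\cdot E=E$ for every $\gamma\in\pi_1^{orb}(Q)$. Any such $\gamma$ maps edges of $T$ simplicially to edges, and because the action is without inversions $\gamma$ cannot swap the two endpoints of an edge it stabilizes. Hence each edge is either fixed pointwise or sent to a distinct edge, and in either case its midpoint goes to a midpoint. Thus $\gamma\cdot\tilde{F}=\tilde{F}$, and in a covering chart $\tilde{U}/D'$ of $Q$ the intersection $\tilde{F}\cap\tilde{U}$ is a bicollared surface invariant under the finite group $D'$, presenting $F$ locally as a $2$-suborbifold of $Q$. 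I would also note that because the singular set of $Q$ lifts into the $1$-skeleton of $\tilde{\Delta}$ while components of $\tilde{F}$ meet only the interiors of $2$- and $3$-simplices, $\tilde{F}$ meets the singular preimage transversely (a fact needed later for Lemma~\ref{suborbliftinglemma}).

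The step I expect to be the most delicate is showing that two-sidedness and the bicollar descend to $Q$: a priori the covering action could permute the two sides of a component of $\tilde{F}$, destroying two-sidedness downstairs. However, if $\gamma\in\pi_1^{orb}(Q)$ stabilizes a component $\tilde{F}_m=\tilde{f}^{-1}(m)$, then $\gamma$ fixes $m$ and therefore either fixes the corresponding edge $e$ pointwise or inverts it; the latter is excluded by hypothesis. So $\gamma$ preserves each component of $e\setminus\{m\}$, and hence preserves each of the two sides of the bicollar $\tilde{F}_m\times(-\varepsilon,\varepsilon)$. Consequently the product structure descends, yielding a bicollar of $F$ in $Q$ and a well-defined two-sided structure on $F$, which completes the proof.
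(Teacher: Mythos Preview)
Your argument is correct and follows essentially the same approach as the paper's proof. One small correction: the claim that $\tilde{F}$ ``meets only the interiors of $2$- and $3$-simplices'' is not quite right---when a $1$-simplex of $\tilde{\Delta}$ maps onto an edge of $T$, the midpoint pulls back to a point in its interior---so the correct statement (which is what the paper uses) is that $\tilde{F}$ meets each $k$-simplex in a properly embedded $(k-1)$-cell and is therefore \emph{transverse} to the $1$-skeleton, hence transverse to $p^{-1}(\Sigma(Q))$. Your descent argument for two-sidedness, showing that any $\gamma$ stabilizing a component $\tilde{F}_m$ must fix $m$ and hence (by no-inversions) preserve each side of the bicollar, is equivalent to the paper's route, which instead passes to the quotient graph $T/\pi_1^{orb}(Q)$ and observes that images of midpoints remain two-sided there.
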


\begin{proof}

Let $s\in T-T^0$ contained in edge $e$, and consider $P=\tilde{f}^{-1}(s)\subset \tilde{Q}$.  Let $\sigma$ be an $i$-simplex of $\tilde{Q}$ and consider $P \cap \sigma$.   If $\tilde{f}$ does not map $\sigma$  onto $e$ then $P\cap \sigma = \emptyset$.  Otherwise, if $\tilde{f}$ does map $\sigma$ onto $e$, no vertex of $\sigma$ is mapped to $s$ since $s$ is not a vertex and the action is simplicial. Therefore, the set $P \cap \sigma$ is an $(i-1)$-cell which is properly embedded in $\sigma$.  We see that $P$ intersects every simplex of $\tilde{Q}$ either trivially or in a properly embedded codimension 1 cell, and so $P$ is a 2-manifold. 
Since for each $s\in T-T^0$ the set $\tilde{f}^{-1}(s)$ is a properly embedded 2-manifold in $\tilde{Q}$, it follows that $\tilde{F}$ is a properly embedded 2-manifold in $\tilde{Q}$. Each point of $E$ is two sided in $T$ and therefore $\tilde{F}=\tilde{f}^{-1}(E)$  is two sided in $\tilde{Q}$ since $\tilde{f}$ is transverse to $E$. The surface $\tilde{F}$  is bicollared in $\tilde{Q}$ because $\tilde{F}$ is bicollared in each cell of $\tilde{\Delta}$ and the action of $T$ is without inversions.

 The set $E$ is $\pi_1^{orb}$ invariant, and $\tilde{f}$ is $\pi_1^{orb}$ equivariant, so $\tilde{F}$ is invariant under the  action of  $\pi_1^{orb}$  on $\tilde{Q}$ and therefore $F$ is a $2$-suborbifold.  Let  $\bar{p}:T\rightarrow T/\pi_1^{orb}(Q)$ be  the natural map.  The map $\tilde{f}$ induces a unique map $f:Q\rightarrow T/\pi_1^{orb}(Q)$ with the property that $f\circ p = \bar{p} \circ \tilde{f}$.  Since $\tilde{f}$ is transverse to $E$ it follows that $f$ is transverse to $\bar{p}(E)$.  Each point of $\bar{p}(E)$ is two sided in $T/\pi_1^{orb}(Q)$, and it follows that $F$ is two sided in $X_Q$.  Since the singular set $\Sigma(Q)$ is contained in the 1-skeleton of $\Delta$ and $\tilde{f}$ acts simplicially, it follows that $\tilde{F}$ is two sided in $Q$.

The singular set  $\Sigma(Q)$ consists of edges and vertices where 3 edges meet.  We have constructed a triangulation so that the singular set is contained in the 1-skeleton.   The set $\tilde{F}$ is transverse to the 1-skeleton of $\tilde{Q}$, and since $p^{-1}(\Sigma(Q))$ is contained in the 1-skeleton of $\tilde{Q}$ we conclude that  $\tilde{F}$ is transverse to $p^{-1}(\Sigma (Q))$.  
The image $F=p(\tilde{F})$ in $Q$ is a $2$-suborbifold.  Because of the transversality,  $F$ only contains a finite number of singular points, and  the bicollaring can be extended to all of $F$.  Thus $F$ is bicollared in $Q$. 
\end{proof} 

Following Culler and Shalen, we say that a bicollared 2-suborbifold $F$ in $Q$ is {\em dual} to an action of $\pi_1^{orb}(Q)$ on $T$ if is arises from this construction for some $\pi_1^{orb}(Q)$ equivariant map that is transverse to $E$. 


If $Q$ is a 3-orbifold and $F$ is a sub-orbifold then there is a natural map $i_*:\pi_1^{orb}(F) \rightarrow \pi_1^{orb}(Q)$ induced by inclusion $i:F\rightarrow Q$.  (And similarly for a component $C_i$ of $Q-F$.) See \cite[Page 167]{MR1065604}  for details.  
Many standard theorems about submanifolds carry through to the orbifold case, for example \cite[Lemma 3.10]{MR1065604}   if $F$ is compact, two sided and incompressible then the induced map is injective.  These $i_*$ maps are well-defined up to conjugation if no base point is specified.

\begin{lemma}\label{lemma:stabilizers}
Let $F$ be a dual 2-suborbifold to an action of $\pi_1^{orb}(Q)$ on a tree $T$.  Then 
\begin{enumerate}
\item\label{lemma:stabilizers:vertex} For each component $C_i$ of $Q-F$ the subgroup $i_*(\pi_1^{orb}(C_i))$ of $\pi_1^{orb}(Q)$ is contained in the stabilizer of some vertex of $T$.
\item\label{lemma:stabilizers:edge} For each component $F_j$ of $F$, the subgroup $i^*(\pi-1^{orb}(F_j))$ of $\pi_1^{orb}(Q)$ is contained in the stabilizer of some edge of $T$.
\end{enumerate}
\end{lemma}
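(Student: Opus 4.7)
The plan is to use the equivariant map $\tilde{f}:\tilde{Q}\to T$ from Lemma~\ref{treeproof2} to translate stabilizers of lifts in $\tilde{Q}$ into stabilizers of vertices or edges of $T$. First I would record two structural observations: each connected component of $T-E$ is the open star of a unique vertex $v$, consisting of $v$ together with all open half-edges meeting at $v$; and since $\tilde{F}=\tilde{f}^{-1}(E)$, each component of $\tilde{Q}-\tilde{F}$ is connected and therefore is mapped by $\tilde{f}$ into a single component of $T-E$.

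For part (\ref{lemma:stabilizers:vertex}), I would pick a connected component $\tilde{C}$ of $p^{-1}(C_i)$ in $\tilde{Q}-\tilde{F}$. Standard orbifold covering-space theory (as in \cite[Page 167]{MR1065604}) identifies the stabilizer $\mathrm{Stab}(\tilde{C})\leq \pi_1^{orb}(Q)$ with a conjugate of $i_*(\pi_1^{orb}(C_i))$. By equivariance of $\tilde{f}$, for any $\gamma\in \mathrm{Stab}(\tilde{C})$ we have $\gamma\cdot\tilde{f}(\tilde{C})=\tilde{f}(\gamma\cdot\tilde{C})=\tilde{f}(\tilde{C})$, so $\gamma$ preserves the unique component of $T-E$ containing $\tilde{f}(\tilde{C})$. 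Since each such component contains a single vertex $v$, $\gamma$ must fix $v$. Hence $\mathrm{Stab}(\tilde{C})$, and so (up to conjugation) $i_*(\pi_1^{orb}(C_i))$, lies in $\mathrm{Stab}(v)$.

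For part (\ref{lemma:stabilizers:edge}), I would fix a connected component $\tilde{F}_j$ of $p^{-1}(F_j)\subseteq\tilde{F}$, whose stabilizer is conjugate to $i_*(\pi_1^{orb}(F_j))$. Since $\tilde{f}$ is simplicial and $E$ is a discrete set of midpoints, the connected image $\tilde{f}(\tilde{F}_j)$ is a single point $s\in E$, the midpoint of some edge $e$. For $\gamma\in \mathrm{Stab}(\tilde{F}_j)$, equivariance gives $\gamma\cdot s=s$, so $\gamma$ fixes the midpoint of $e$ and thus preserves $e$ setwise. Because $\pi_1^{orb}(Q)$ acts on $T$ without inversions, $\gamma$ cannot swap the endpoints of $e$, so $\gamma$ fixes $e$ pointwise, placing $\mathrm{Stab}(\tilde{F}_j)$ inside $\mathrm{Stab}(e)$.

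The one point requiring care is the orbifold version of the correspondence between components of $p^{-1}(C_i)$ (respectively $p^{-1}(F_j)$) and conjugacy classes of the subgroups $i_*(\pi_1^{orb}(C_i))$ (respectively $i_*(\pi_1^{orb}(F_j))$), especially since $C_i$ and $F_j$ may meet the singular locus $\Sigma(Q)$. However, the transverse construction in Lemma~\ref{lemma:bicollared} already ensures that $F$ meets $\Sigma(Q)$ in a controlled way, so the argument goes through as in the manifold case after choosing basepoints away from $\Sigma(Q)$. This is the only step I expect to require genuine attention; the tree-theoretic core of the argument is formal once $\tilde{f}$ is in hand.
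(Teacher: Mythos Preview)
Your proposal is correct and follows essentially the same approach as the paper: both use the equivariant map $\tilde{f}$ to send a chosen lift $\tilde{C}$ (resp.\ $\tilde{F}_j$) into a single component of $T-E$ (resp.\ a single midpoint), then invoke equivariance and the no-inversions hypothesis to conclude that the stabilizer fixes the unique vertex (resp.\ the full edge). Your version is slightly more explicit about the covering-space identification of $\mathrm{Stab}(\tilde{C})$ with a conjugate of $i_*(\pi_1^{orb}(C_i))$, which the paper leaves implicit, but there is no substantive difference.
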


\begin{proof}
Let $\tilde{f}:\tilde{Q} \rightarrow T$ be a $\pi_1^{orb}(Q)$-equivariant map transverse to the set $E$ of midpoints of edges of $T$ so that $F=f^{-1}(E)$.   Let $\Gamma_i$ be $i_*(\pi_1^{orb}(C_i) ) < \pi_1^{orb}(Q)$.   

By construction of $F$,  $\tilde{f}$ maps any connected lift of $C_i$ into  a component of $T-E$.
  The subgroup  $\Gamma_i$ stabilizes a component $\tilde{C}$ of $\tilde{Q}-\tilde{F}$. 
Since $\tilde{f}$ is $\pi_1^{orb}(Q)$ equivariant, $\Gamma_i$ stabilizes $\tilde{f}( \tilde{C} )$.  Since this is contained in a component of $T-E$, $\tilde{f}$  stabilizes the lone vertex in this component.

For the second item,  we argue as above and conclude that $i_*(\pi_1^{orb}(F_j))$ stabilizes a component of $T-T^0$, where $T^0$ is the vertex set.  Since the action is simplicial, and without inversions this group must stabilize a whole edge. 
\end{proof}

We want this construction to give an incompressible 2-suborbifold in $Q$, so the next step is to make $F$ orbifold incompressible.   This will complete the proof of Theorem~\ref{orbifoldTreeAction}.

 \orbifoldmaintheorem*

\begin{proof}
As before, we have a $\pi_1^{orb}(Q)$ equivariant map $\tilde{f}:\tilde{Q} \rightarrow T$ that is transverse to $E$ where $F=f^{-1}(E)$ and $\tilde{f}^{-1}(E)=p^{-1}(F)$ in $\tilde{Q}$.  If $F$ were empty  the $Q-F$ consists of all of $Q$ and by Lemma~\ref {lemma:stabilizers} (\ref{lemma:stabilizers:edge}) we conclude that $\pi_1^{orb}(Q)$ fixes a vertex of $T$, which by definition means that this action is trivial. Therefore, $F$ is non-empty.

From the construction thus far, and Lemma~\ref{lemma:bicollared} in particular, we may assume that $F$  is a bicollared 2-suborbifold.    We may assume that $F$ is not essential, and so either $F$ is not incompressible or $F$ has boundary parallel or spherical components.  We will first consider the case when $F$ is not incompressible. We will determine another $\pi_1^{orb}(Q)$-equivariant map $\tilde{f}':\tilde{Q}\rightarrow T$ which is also transverse to $E$ such that $(\tilde{f}')^{-1}(E) = p^{-1}(F')$ for a 2-suborbifold $F'$ dual to the action.

If $F$ is not incompressible, then there is a component $F_0$ of $F$  that contains a $1$-suborbifold that bounds an orbifold disc $D$ in $Q-F_0$ which does not bound an orbifold disc in $F_0$.  
If $D$ intersects more than one component of $F$ then we can chose an innermost intersection of $D$ with a component of $F$ to get a new compressing orbifold disc $D'$ for the component $F_0'$.  Thus we will assume that $D\cap F=\partial D\subset F_0$.

We may assume that $\partial D$ does not intersect $\Sigma(Q)$ by pushing  $\partial D$ off the singular points.
 Let $A$ be an annular neighborhood of $\partial D$ in $F_0$ (we choose $A$ such that it contains no singular points).  Let $D_1$ and $D_2$ be two parallel copies of $D$, whose boundaries are the two components of $\partial A$.  Note that the union of $D_1$ and $D_2$ and $A$ is a $2$-sphere in the underly space $X_Q$.  Then because $Q$ is orbifold irreducible the union of $D_1$ and $D_2$ and $A$ is a spherical suborbifold that bounds an orbifold ball in $Q$.

\begin{figure}
\begin{center}
\includegraphics[width=3in]{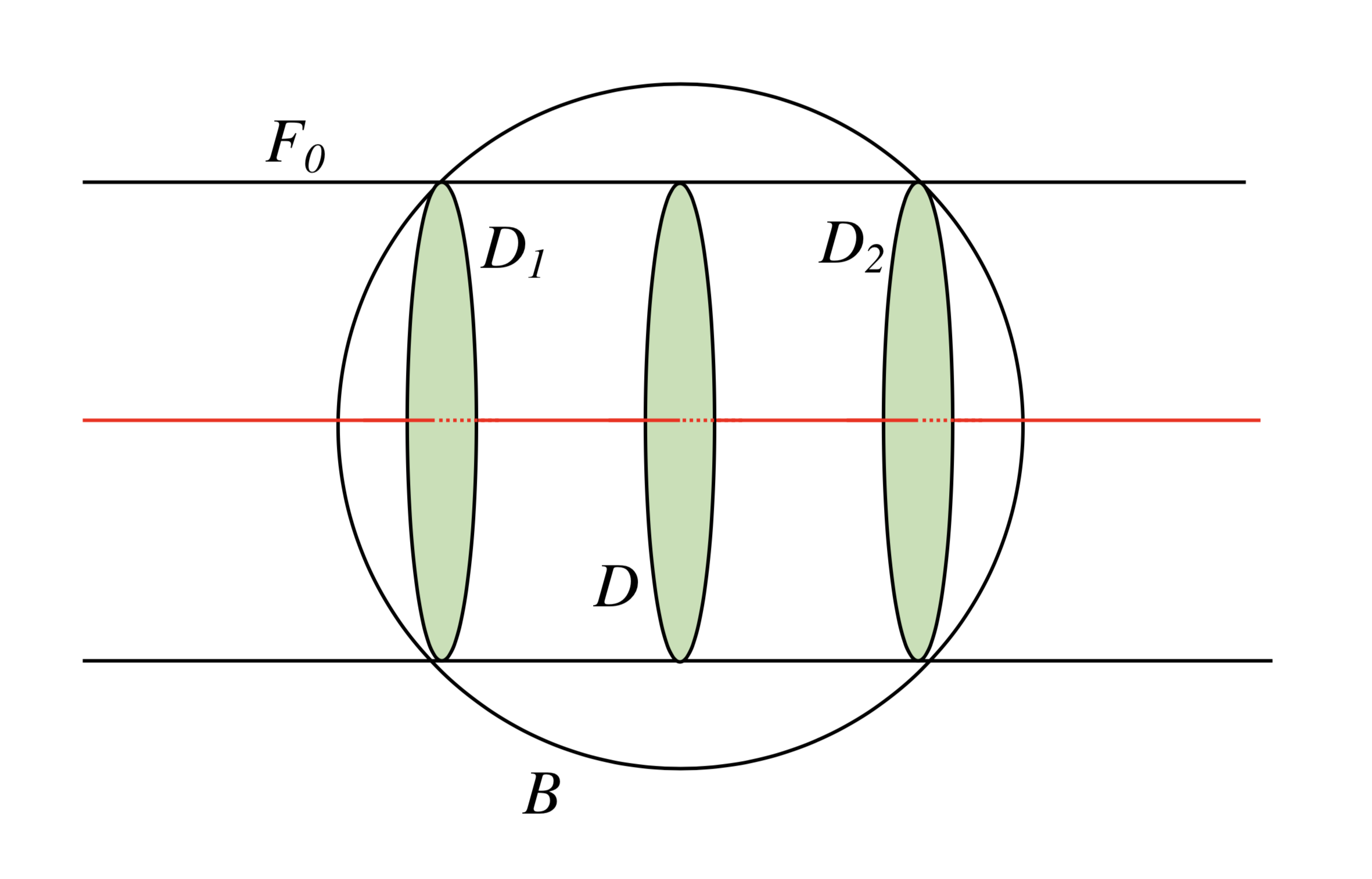} 
\caption{Compressing orbifold disk}
 \label{compressingSurgery}
\end{center}
\end{figure}

Let $B$ be a nice neighborhood of $D$ in $Q$ such that $B$ is an orbifold ball and meets $F$ along the boundary of the annulus $A$, so  $B\cap F_0 = \partial A$.   Because the singular locus is a trivalent graph in $Q$, we can perform a  homotopy so that $D$ does not contain a vertex of this graph and therefore choose  $B$ to intersect only one `edge' of the singular locus.  Therefore, $B$ is locally the quotient of a ball by a finite cyclic group.  

Then $B$ is the union of a solid torus $X^+$ and an orbifold ball $X^-$, where $X^+\cap X^- = A$.  We may take $D_1$ and $D_2$ to be properly embedded in $X^+$.  Now let $\tilde{B}$ be a component of $p^{-1}(B)$, and let $\tilde{A}$, $\tilde{D_1}$, $\tilde{D_2}$, $\tilde{X}^+$, and $\tilde{X}^-$ denote the inverse images of $A$, $D_1$, $D_2$, $X^+$, and $X^-$ in $\tilde{B}$.

Because $\tilde{f}$ is transverse to $E$ and $\tilde{A}=\tilde{B}\cap\tilde{F}^{-1}(E)$ and since $T$ is a tree, $\tilde{f}$ maps $\tilde{X}^+$ and $\tilde{X}^-$ to the closure of two different components of $T-E$.  Lets call these components $Y^+$ and $Y^-$ respectfully.

We are now going to construct a new $\pi_1$-equivariant map $\tilde{f}'$ by removing the compressing orbifold disk.  Let $\tilde{f}'$ be equal to $\tilde{f}$ for all of $\tilde{Q}-p^{-1}(B)$.  First define $\tilde{f}'$ to be constant on both $\tilde{D_1}$ and $\tilde{D_2}$ and equal to the unique point of $E$ that is in the intersection of $Y^+$ and $Y^-$.  Extend this map to the rest of $p^{-1}(D_1\cup D_2)$ by having $\tilde{f}'(\gamma \cdot x)=\gamma\cdot \tilde{f}'(x)$ for all $x\in p^{-1}(D_1\cup D_2)$ and $\gamma\in\pi_1(Q)$.  To see that this is well-defined notice that if $\gamma \cdot x =\gamma' \cdot x'$, then $x=\gamma^{-1}\gamma' \cdot x'$, meaning $\gamma^{-1}\gamma'$ stabilizes $\tilde{D_1}$ (or $\tilde{D_2}$) in $\tilde{Q}$, and  $\gamma \cdot \tilde{f}'(x)=\gamma' \cdot \tilde{f}'(x')$.  If $\gamma \cdot \tilde{f}'(x') \neq \gamma' \cdot \tilde{f}'(x')$, then $\gamma^{-1}\gamma'$ does not stabilize $Y^+ \cap Y^-$.  However, $\gamma^{-1}\gamma'$ stabilizes $\tilde{D_1}$ ($\tilde{D_2}$) and thus $\partial\tilde{D_1}$  ($\partial\tilde{D_2}$). Thus $\gamma^{-1}\gamma'$ does stabilize $Y^+ \cap Y^-$. Since it stabilizes $\partial\tilde{D_1}$  ($\partial\tilde{D_2}$) and $\tilde{f}$ is a $\pi_1$-equivariant map that sends $\partial\tilde{D_1}$  ($\partial\tilde{D_2}$) onto $Y^+ \cap Y^-$, we  conclude that $\tilde{f}'$ is so far well-defined.

Now $\tilde{f}'$ needs to be extended over the three orbifold balls that $\tilde{D_1}$ and $\tilde{D_2}$ divides $\tilde{B}$ into,  Call these $B_1$,$B_2$ and $B_3$.  To extend the map over $B_i$ start with any triangulation of $p(B_i)$ that lifts to a triangulation of $B_i$ (which is fine enough such that no $3$-cell has all of its vertices on the boundary of $p(B_i)$ and every $1$-cell and $2$-cell that has all of its vertices on the boundary are completely contained in the boundary).  Now follow the construction of a $\pi_1^{orb}$-equivariant map we used before sending all $0$-cells that are not in $\partial B_i$ to the vertex of $Y^+$ or $Y^-$ (note that for all $B_i$ the map $\tilde{f}$ sends $\partial B_i$ to the closure of either $Y^+$ or $Y^-$).  It should be noted that no points in the interior of $B_i$ are mapped to a point of $E$ by this construction.  This gives us a $\pi_1$-equivariant map $\tilde{f}':\tilde{Q}\to T$ that agrees with $\tilde{f}$ outside of $p^{-1}(B)$, and for which the compressing disk has been removed from $p(\tilde{f}'^{-1}(E))$.

Repeat the above steps until there are no more compressing orbifold disks for $F$.    Each time the process is performed the component that was cut either remains connected or is split into two new components of $F$.  When $F_0$ remains connected the genus of $F_0$ in the underlying space $X_Q$ is reduced.  This process cannot increase the genus of a component.  If a component is split into two, the sum of the genera of the two components is at most the genus of the original component.  Therefore, it is only possible   it is only possible to perform this process a finite number of times.  

When we perform this process and split a component $F_0$  into two components $F_0'$ and $F_1'$ by the compressing orbifold disc $D$, we do so because  the curve $\partial D$ in $F_0$ does not bound an orbifold disc in $F_0$.  As such,    the new components must have a positive genus, a non-empty boundary, or contain at least two singular points (at least three singular points if the disk $D$ has a singular point). Otherwise, one of the components of $F_0-\partial D$ is an open orbifold disk in $Q$ whose closure is a closed orbifold disk in $F$ bounded by $\partial D$.  Suppose that $F$ has a combined genus $g$, boundary components $b$ and singular points $s$ across all its components.  Then from the genus and boundary components the number of compressing disks it is possible to remove is bounded above by $2g-1+b$, each of which potentially adds a new singular point to $F$.  The maximum number of singular points $F$ can have is bounded above by $4g+2b$ when not yet counting those potentially added when creating a new component without boundary that has genus $0$.  When compressing along a disk that splits a component $F_0$ into two components $F_0'$ and $F_1'$ with at least one, say $F_0'$, having genus $0$ and no boundary, $F_1'$ will have less singular points than $F_0$ and $F_0'$ will have at most one more singular point than $F_0$.  However, both $F_0'$ and $F_1'$ will have less singular points than $F_0$ unless $F_1'$ has positive genus or a non-empty boundary.  The maximum number of compressing disks possible to remove without separating $F_0$ into two genus $0$ components with empty boundary is bounded above by $6g+3b+s$.  The maximum number of compressing disks removable from $F$ is bound above by $12g+3b+2s$.

We conclude that after a finite number of steps $F$ will no longer have any compressing orbifold disks and we can move on to removing boundary parallel and spherical suborbifold components from $F$. As a result,  we now handle the case when $F$ has boundary parallel or spherical suborbifold components. 

Suppose $F_0$ is an innermost boundary parallel component of $F$.  Then there exists a deformation retract $\rho$ of $Q$ onto a component of $Q-F_0$ that is constant and injective on neighborhoods of every other component of $F$.  Then there is a $\pi_1(Q)$-equivariant map $\tilde{\rho}:\tilde{Q}\to \tilde{Q}$ such that $p^{-1}(F_0)\cap\tilde{\rho}(\tilde{Q})=\emptyset$. Let $\tilde{f}'=\tilde{f}(\tilde{\rho})$.  Then $F_0\not\subset p(\tilde{f}'^{-1}(E))\subset F$.

Suppose $F_0$ is some spherical $2$-orbifold component of $F$.  Then $F_0$ is contained in the interior of some $3$-orbifold ball $B$ such that $\tilde{f}$ maps each component of $p^{-1}(\partial B)$ to a single point of $T$.  Define $\tilde{f}'$ to agree with $\tilde{f}$ for all $\tilde{Q}-p^{-1}(B)$ and let $\tilde{f}'$ map each component $\tilde{B}$ of $p^{-1}(B)$ to the point $\tilde{f}'(\partial\tilde{B})$.

The $2$-suborbifold $F$ in $Q$ defined by $\tilde{f}$ is incompressible, non-boundary parallel, and contains no essential orbifold $2$-spheres.  It remains to prove that $F$ is non-empty. From the construction of $F$ from the map $\tilde{f}$, $F$ can only be empty if $\tilde{f}$ maps all of  $\tilde{Q}$ into a component $V$ of $T-E$.  However, because $\pi_1^{orb}(Q)$ acts non-trivially on $T$ no component of $T-E$ is stabilized by $\pi_1^{orb}(Q)$.  Thus there exists some $\gamma\in\pi_1^{orb}(Q)$ such that $\gamma \cdot V=V'\neq V$.  Since $\tilde{f}$ is $\pi_1^{orb}$-equivariant, the image of $\tilde{f}$ must also contain a point in $V'$.  Because the image of $\tilde{f}$ is connected it contains a point in $E$.  Therefore, $F$ is a non-empty essential $2$-suborbifold. 
\end{proof}

\subsection{Lifting essential $2$-suborbifolds  }\label{section:lifting}

We will now show that when $Q$ is good the suborbifolds obtained from Theorem~\ref{orbifoldTreeAction}  lift to essential surfaces in the covering manifold.   
\begin{lemma}\label{suborbliftinglemma}
Let $F$ be an essential $2$-suborbifold in a good, orientable, orbifold-irreducible $3$-orbifold $Q$ such that $F$ contains only a finite number of singular points.  Let $p:M\to Q$ be an orbifold covering map by an irreducible manifold $M$.  Then $p^{-1}(F)$ is a symmetric essential surface in $M$. 
\end{lemma}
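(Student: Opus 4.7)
My plan is to verify each condition for $p^{-1}(F) \subset M$ to be an essential surface, one at a time, exploiting that $M$ is a manifold (so all local groups are trivial) and that $p$ restricts to honest coverings on components. First I would argue that $p^{-1}(F)$ is a smoothly embedded two-sided 2-manifold. Away from the singular locus $\Sigma(Q)$ the map $p$ is a local homeomorphism, so the preimage is automatically a surface there. At each of the finitely many points $x \in F \cap \Sigma(Q)$, an orbifold chart $U \cong \tilde{U}/D$ about $x$ pulls back to a disjoint union of charts $\tilde{U}/D'$ with $D' \leq D$; because $M$ is a manifold each such $D'$ must be trivial, so every component of $p^{-1}(U)$ is a copy of $\tilde{U}$ on which $p$ is the quotient map $\tilde{U} \to \tilde{U}/D$. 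The $D$-invariant lift $\tilde{F} \subset \tilde{U}$ of $F \cap U$ is a smoothly embedded bicollared disc, so its copies in $p^{-1}(U)$ give a smooth bicollared local picture of $p^{-1}(F)$. Patching with the bicollaring of $F$ elsewhere yields a bicollared 2-manifold in $M$.

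Next I would check $\pi_1$-injectivity. Since $F$ is orbifold incompressible and bicollared, the orbifold loop theorem (cf.~\cite[Lemma 3.10]{MR1065604}) gives that the inclusion $i_\ast : \pi_1^{orb}(F_0) \hookrightarrow \pi_1^{orb}(Q)$ is injective for each component $F_0$ of $F$. If $S$ is a component of $p^{-1}(F)$ covering $F_0$ under $p|_S$, then $\pi_1(S)$ is identified, via $i_\ast$, with the subgroup $p_\ast \pi_1(M) \cap i_\ast \pi_1^{orb}(F_0)$ of $\pi_1^{orb}(Q)$, and the inclusion $\pi_1(S) \to \pi_1(M) \cong p_\ast \pi_1(M)$ is just $i_\ast$ restricted to this subgroup, hence injective.

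Finally I would rule out sphere and boundary-parallel components and record the symmetry. If a component $S$ of $p^{-1}(F)$ were a 2-sphere, then $p(S)$ is the quotient of $S^2$ by the stabilizer of $S$ in the deck group, hence a spherical 2-suborbifold of $F$, contradicting essentiality. If $S$ were boundary-parallel in $M$, cobounding a product region $S \times [0,1]$ with a component $T \subset \partial M$, then the deck-group translates of this region assemble under $p$ into a product region between $p(S)$ and $p(T) \subset \partial Q$, exhibiting $p(S)$ as boundary-parallel in $Q$ and again contradicting essentiality of $F$. Non-emptiness is inherited from $F$, and $p^{-1}(F)$ is manifestly invariant under the deck group of $p$; in the intended application the deck group is the full orientation-preserving isometry group of $M$, yielding the symmetric claim as stated. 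The main obstacle is the local analysis at singular points: I must verify that the bicollaring of $F$ lifts cleanly and compatibly with the finite-group quotient in each orbifold chart, and it is precisely for this step that the hypothesis $\#(F \cap \Sigma(Q)) < \infty$ is needed.
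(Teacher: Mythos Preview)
Your argument is correct and follows the same overall architecture as the paper's proof: establish the bicollaring via local orbifold charts at the finitely many singular points of $F$, rule out spherical components by projecting to $Q$, verify $\pi_1$-injectivity, rule out boundary-parallel components by pushing the parallelism down to $Q$, and record the deck-group invariance.

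The one substantive difference is in the incompressibility step. The paper simply invokes an external result (\cite[Proposition~2.8]{MR1926138}) asserting that preimages of orbifold-incompressible $2$-suborbifolds under orbifold covers are incompressible. You instead give the direct covering-space argument: use \cite[Lemma~3.10]{MR1065604} to get $i_\ast:\pi_1^{orb}(F_0)\hookrightarrow\pi_1^{orb}(Q)$, and then identify $\pi_1(S)$ with the intersection $p_\ast\pi_1(M)\cap i_\ast\pi_1^{orb}(F_0)$ inside $\pi_1^{orb}(Q)$. This is a perfectly good replacement and makes the lemma more self-contained; the paper's citation buys brevity. Your handling of the sphere case is also slightly cleaner than the paper's: you contradict essentiality of $F$ directly (no spherical components), whereas the paper detours through orbifold-irreducibility of $Q$. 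For the boundary-parallel step the two arguments are at the same level of rigor; the paper pushes a homotopy down via $p$, while you assemble product regions equivariantly --- both are sketches of the same idea.
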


\begin{proof}
We will begin by proving that $F$ lifts to a bicollared surface in $M$. Since $Q$ is covered by a manifold $M$, the universal cover of $Q$ is also a manifold.  Away from the singular points on $F$, $F$ is bicollared surface.  So,  outside a neighborhood of each singular point on $F$, $F$ lifts to a bicollared surface in $M$.  Neighborhoods of the singular points can be made sufficiently small so that the orbifold structure in the neighborhood is that of a $2$-sphere modulo an action by a finite cyclic group.  

 Suppose some component $S$ of $p^{-1}(F)$ is a $2$-sphere.  Then $p(S)$ would be an essential spherical $2$-suborbifold in $Q$ because it is a $2$-sphere modulo the action of a finite group. This contradicts the fact that  $Q$ is   orbifold-irreducible.  Thus $p^{-1}(F)$ contains no $2$-sphere components.

 It follows   from \cite[Proposition $2.8$]{MR1926138} that every component of $p^{-1}(F_0)$ is incompressible for every component $F_0$ of $F$.  Thus, every component of $p^{-1}(F)$ is $\pi_1$-injective.


 Suppose a component $S$ of $p^{-1}(F)$ is boundary parallel.  Then there exists a homotopy $h$ of $S$ that takes $S$ into the boundary of $M$.  Then by the composition of continuous functions $p(h)$ is a homotopy that takes $S$ into the boundary of $Q$, contradicting the fact that  $F$ has no boundary parallel components.  Thus we conclude that $p^{-1}(F)$ contains no boundary parallel components, which completes the proof.

The surface is symmetric by construction since it is the pullback of a surface in $Q=M/G$. 
 \end{proof}

\section{Character Varieties }\label{section:charactervarieties}

\subsection{$\SL_2(\C)$ Character Varieties}\label{section:SL}

 We will use the standard terminology in the field, where a variety refers to an algebraic set which is not necessarily irreducible or smooth.
The {\em $\SL_2(\C)$ representation variety}  of the group $\Gamma$ is the  set 
\[ R(\Gamma)=\{\rho:\Gamma\to \SL_2(\mathbb{C})\}.\]
  Roughly speaking, these representations correspond to geometric structures on $M$, and conjugate representations correspond to isometric hyperbolic structures on $M$. As such it is natural to work with the character variety, which is $R(\Gamma)$ up to trace equivalence.  Formally, the {\em $\SL_2(\C)$ character variety} of $\Gamma$ is 
\[ X(\Gamma)  = \{ \chi_{\rho}: \rho\in R(\Gamma)\} \]
where the character function $\chi_{\rho}:\Gamma \rightarrow \C$ is defined as $\chi_{\rho}(\gamma) = \tr (\rho(\gamma))$ for all $\gamma \in \Gamma$. The natural surjection $t:R(\Gamma) \rightarrow X(\Gamma)$ which sends a representation to its trace is a regular map. 
Both $R(\Gamma)$ and $X(\Gamma)$ are complex algebraic sets defined over $\Q$. 
Isomorphic group yield isomorphic varieties, so we will often write $R(M)$ and $X(M)$ to denote $R(\pi_1(M))$ and $X(\pi_1(M))$ up to isomorphism.
A representation is called {\em reducible} if it is conjugate to an upper triangular representation, and {\em irreducible} otherwise. Two irreducible representations into $\SL_2(\mathbb{C})$ have the same trace function if and only if they are conjugate representations (see \cite{MR683804}).
A point $x\in X(\Gamma)$ equals $t(\rho)$ for some representation $\rho$ and we will often write it as $\chi_{\rho}$ with the understanding that $\rho$ is not uniquely determined.
 We will make use of the regular functions
\[ I_{\gamma}:X(\Gamma) \rightarrow \C\]
defined by $I_{\gamma}(\chi_{\rho})= \chi_{\rho}(\gamma)=\tr(\rho(\gamma))$ for a fixed $\gamma\in \Gamma$.
The $I_{\gamma}$ functions  extend to rational functions on a smooth projective completion of $X(M)$.

 In general, the  algebraic sets $R(M)$ and $X(M)$ are  not   irreducible  (as algebraic sets) and  have multiple components.   For example, the set of characters of reducible representations is easily seen to be an algebraic set in its own right.  
 We call a component a {\em canonical component} and write $R_0(M)$ (or $X_0(M)$)  if it contains a (character of a) discrete and faithful representation. 
We write $X(M)=X_{red}(M)\cup X_{irr}(M)$ where $X_{red}(M)$ contains the characters of irreducible representations, and $X_{irr}$ is the (affine) Zariski closure of $X(M)-X_{red}(M)$.  That is, $X_{irr}(M)$ is the Zariski closure of the union of components which each contain the character of irreducible representation.

We will call the character variety defined as above a {\em natural model} to distinguish it from a smooth projective completion.
A point in the Zariski closure of $X(M)$ is an {\em ideal point} if it does not correspond to a trace function. That is, the set of ideal points consists of those points in the projective closure of $X(M)$ that are not in $X(M)$.  
 
 We will call a  $\chi_{\rho} \in X(\Gamma)$ an {\em algebraic non-integral}  (or ANI) point if  $\rho(\Gamma)\subset \SL_2(\bar{\Q})$,  and there is a $\gamma \in \Gamma $ such that $\tr(\rho(\gamma)) $ is not contained in $\mathcal{O}_K$, where $\mathcal{O}_K$ is the ring of integers of a number field $K$. 
   Culler and Shalen  \cite{MR683804,MR881270}  showed the following   for an ideal   or  ANI point $x$  and Shanuel and Zhang \cite[Corollary 3]{MR1835066} generalized this to  other valuations.  Either 
\begin{enumerate}
\item There is a unique primitive element $\gamma$ such that $I_{\gamma}(x)\in \C$, \\
or 
\item $I_{\gamma}(x)\in \C$ for all $\gamma \in \pi_1(\partial M)$. 
\end{enumerate}
Strongly detected slopes correspond to the $\gamma$  to case $(1)$ and weakly detected slopes to case $(2)$.     In $(2)$, $\chi_{\rho}$ detects a closed essential surface.

\subsection{$\PSL_2(\C)$ Character Varieties}\label{section:PSL}

There are several different constructions for the $\PSL_2(\C)$-representation
and character varieties of $\Gamma$; we refer the reader to   \cite[Section 3]{MR1670053},
\cite[Section 2.1]{MR1739217},  and \cite{MR1248117}.

Let $\bar{R}(\Gamma)$ be the set of representations of $\Gamma$ into $\PSL_2(\C)$. This also has the structure of a complex algebraic set. The natural quotient map  \[ \Phi:\SL_2(\C) \rightarrow \PSL_2(\C)\] induces a regular map $\Phi_*:R(\Gamma) \rightarrow \bar{R}(\Gamma)$.  Each fiber is either empty or an orbit of the free $\Hom(\Gamma, \Z/2\Z)$ action. This is the sign change action on $R(\Gamma)$ defined by $(\epsilon \cdot \rho)(\gamma) = \epsilon(\gamma) \rho(\gamma)$ for $\epsilon \in \Hom(\Gamma, \Z/2\Z)$. A $\PSL_2(\C)$ representation $\bar{\rho}$ lifts to an $\SL_2(\C)$  representation $\rho$ exactly when the second Stiefel-Whitney class $\omega_2(\overline{\rho}) \in H^2(\Gamma; \Z/2\Z)$ vanishes.  The isomorphism class of $\omega_2(\bar{\rho})$ depends only on the component of $\bar{\rho}\in \bar{R}(\Gamma)$ \  and $\Phi_*$ defines a regular cover from $R(\Gamma)$ to its image. 

As in the $\SL_2(\C)$ case, the action of $\PSL_2(\C)$ on $\bar{R}(\Gamma)$ has a quotient $\bar{X}(\Gamma)$ with quotient map $\bar{t}:\bar{R}(\Gamma)\rightarrow \bar{X}(\Gamma)$ which is constant on conjugacy classes of representations.  The set $\bar{X}(\Gamma)$ is an affine algebraic set and is determined by the stipulation that its coordinate ring is isomorphic to the ring of invariants of the natural action of $\PSL_2(\C)$ on $\C[\bar{R}(\Gamma)]$.  For each $\gamma \in \Gamma$  the map $\bar{t}:\bar{X}(\Gamma) \rightarrow \C$ given by $\bar{t}(\bar{\rho}) = (\tr(\bar{\rho}(\gamma)))^2$ is regular,  and we write $\bar{t}(\bar{\rho})=\chi_{\bar{\rho}}$.  

We call $\bar{X}(\Gamma)$ the {\em $\PSL_2(\C)$ character variety of $\Gamma$}, and if $\Gamma$ is the fundamental group of a 3-manifold $M$ or orbifold $Q$ we use $\bar{X}(M)$ or $\bar{X}(Q)$ to denote $\bar{X}(\pi_1(M))$ and $\bar{X}(\pi_1^{orb}(Q))$ up to isomorphism. We call an irreducible component a {\em canonical component} if it contains $\bar{t}(\bar{\rho})$ where $\bar{\rho}$ is a discrete and faithful representation and write $\bar{X}_0$.

The function $\Phi$ induces a regular map $\Phi_{\#}:X(\Gamma) \rightarrow \bar{X}(\Gamma)$ with fibers that are either empty or the orbits of the $\Hom(\Gamma, \Z/2\Z)$ action  as $\epsilon \cdot \chi_{\rho}=\chi_{\epsilon\cdot \rho}$.  Indeed (see \cite{MR1670053} Lemma 3.1) two characters $\chi_{\bar{\rho}} $ and $\chi_{\bar{\rho'}}$ are equal exactly when the trace of $\rho(\gamma)$ equals the trace of $\rho'(\gamma)$ up to sign for all $\gamma \in \Gamma$.

Similar to the $I_{\gamma}$ functions defined above, for any $\gamma\in \Gamma$ we define $f_{\gamma}:\bar{X}(\Gamma) \rightarrow \C$   by 
\[ f_{\gamma}(\chi_{\bar{\rho}}) = \tr (\Phi^{-1}(\bar{\rho}(\gamma)))^2-4\] where the squaring is necessary because the sign is not well-defined.

We  call a point $\bar{\rho}\in \bar{R}(\Gamma)$ an {\em algebraic non-integral}  (or ANI) point if $\rho(\Gamma)\subset \PSL_2(\bar{\Q})$  and there is a $\gamma \in \Gamma $ such that $\tr(\Phi^{-1}(\bar{\rho}(\gamma)))^2$ is not contained in   $ \mathcal{O}_K$, where $\mathcal{O}_K$ is the ring of integers of a number field $K$.

\subsection{Induced Maps}\label{section:inducedmaps}

 A homomorphism  $\phi:A\rightarrow B$ between two groups $A$ and $B$ induces a map
\[  \begin{aligned}\phi^*: R(B) &  \rightarrow R(A) \\  \rho   & \mapsto \rho \circ \phi. \end{aligned} \] 
This descends to the $\SL_2(\C)$ character variety as
\[  \begin{aligned}\hat{\phi}: X(B) &  \rightarrow X(A) \\  \chi_{\rho}   & \mapsto \chi_{\phi^*(\rho)} \end{aligned} \] 
where $\chi_{\phi^*(\rho)} =\chi_{ \rho \circ \phi}.$
This is defined for all (affine) points on $R(B)$ or $X(B)$.  We obtain analogous maps in the $\PSL_2(\C)$ setting.  Re-using notation, we have the induced map
\[  \begin{aligned}\phi^*: \bar{R}(B) &  \rightarrow \bar{R}(A) \\  \bar{\rho}   & \mapsto \bar{\rho} \circ \phi. \end{aligned} 
\]
and for the $\PSL_2(\C)$ character variety we have
\[
 \begin{aligned}\hat{\phi}: \bar{X}(B) &  \rightarrow \bar{X}(A) \\  \chi_{\bar{\rho}}   & \mapsto \chi_{\bar{\rho}\circ \phi} \end{aligned} \] 
where $\chi_{\bar{\rho}}=\bar{t}(\bar{\rho})$, and $\chi_{\bar{\rho}\circ \phi}=\bar{t}(\bar{\rho}\circ \phi)$.

If $\hat{\phi}$ is dominant (its image is Zariski dense) then  $\hat{\phi}$ induces a $\C$-algebra homomorphism (see \cite[page 25]{MR0463157} ) between function fields $\C(X(A))\rightarrow \C(X(B))$.  For a regular function $f\in \C(X(A))$ its image is $f\circ \phi$ where it's understood that this is defined on an open subset of $X(A)$. If the mapping $\hat{\phi}$ is birational then it induces an isomorphism between  $\C(X(A))$ and $\C(X(B))$.
Considering the case when $X(A)$ and $X(B)$ (or $\bar{X}(A))$ and $\bar{X}(B)$) are curves,  it follows that $\hat{\phi}(X)$ is either Zariski dense in  $X(A)$  (or $\bar{X}(A)$) or it is a point.

Let $Q=M/G$ and consider the branched, or orbifold covering $p:M\rightarrow Q$ induced by the natural quotient.  This $p$ is an orbi-map (as defined in \cite{MR1065604} on pages 161-162) and therefore by \cite[Proposition 2.4]{MR1065604}   the induced homomorphism $p_*:\pi_1(M) \rightarrow \pi_1^{orb}(Q)$ is injective.   If $\tilde{\alpha}$ is a path in the underlying space of the universal cover of $M$, then $p_*([\tilde{\alpha}])= [\tilde{\alpha}]  $ where this second $[\tilde{\alpha}]$ is in the universal cover of $Q$.

\subsection{Symmetries and the Canonical Component}\label{section:symmandcanonical}

A symmetry $\sigma$ of a knot $K$ is an orientation preserving homeomorphism of $S^3$ that send $K$ to itself.  As a consequence of Mostow rigidity the full group of symmetries of $K$ is finite and acts on $M=S^3-K$ by isometries (see \cite{thurston} Corollary 5.7.4). The symmetry group of a knot in $S^3$ is either cyclic or dihedral \cite{MR1646740}.    By the Gordon-Luecke theorem \cite{MR965210} any homeomorphism of knot complements must take meridian to meridian, and therefore take an (un-oriented) longitude to itself as well.   In general, if $M$ is a one cusped hyperbolic 3-manifold  we define a {\em symmetry} of $M$ to be any orientation-preserving isometry of $M$.  We will only consider symmetries that preserve the framing of the the cusp.

If $\sigma:M\rightarrow M$ is a symmetry of $M$ then $\sigma$ induces an automorphism of  $\sigma_*$ of $\pi_1(M)$. This induces an automorphism  $\sigma^*$ of the representation variety $R(M)$ by $\sigma^*(\rho)=\rho\circ \sigma_*$, and an automorphism  $\hat{\sigma}$ of the $\SL_2(\C)$ character variety $X(M)$ by $\hat{\sigma}(\chi_{\rho})=\chi_{\sigma^*(\rho)}$.  
An analogous statement can be made in the $\PSL_2(\C)$ setting.
These are all regular maps.

In this section, we prove the following theorem.
\begin{thm}\label{thm:symmetiesandcanonicalcomponent}
Let $\sigma$ be a  symmetry of the one cusped hyperbolic manifold $M$ with induced symmetry $\hat{\sigma}$ on  $\bar{X}(M)$.  A canonical component $\bar{X}_0(M)$  of the $\PSL_2(\C)$ character variety  is a subset of the fixed point set of $\hat{\sigma}$.  For $g\in \pi_1(M)$, and a point $\chi_{\rho}$ on a canonical component of the $\SL_2(\C)$ character variety, $X_0(M)$, we have   $\hat{\sigma}(\chi_{\rho}(g)) = \pm \chi_{\rho}(g)$.
 If for all $g\in \pi_1(M)$,  $\sigma_*(g)$ and $g$ have the same image in the surjection $\pi_1(M) \rightarrow \Z/2\Z$ then $X_0(M)$ is  a subset of the fixed point set of $\hat{\sigma}$. 
\end{thm}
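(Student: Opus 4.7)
The plan is to handle the $\PSL_2(\C)$ statement first via Mostow rigidity together with Thurston's hyperbolic Dehn surgery theorem, producing enough fixed points of $\hat\sigma$ on the irreducible curve $\bar X_0(M)$, and then deduce the $\SL_2(\C)$ statements by analyzing the $\Hom(\pi_1(M),\Z/2\Z)$ ambiguity that arises in lifting a $\PSL_2(\C)$ representation to $\SL_2(\C)$.

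For the $\PSL$ assertion, the starting observation is that if $\bar\rho_0$ is a discrete faithful representation, then $\bar\rho_0 \circ \sigma_*$ is also discrete and faithful (since $\sigma$ is an orientation preserving isometry). Mostow rigidity then forces $\bar\rho_0$ and $\bar\rho_0 \circ \sigma_*$ to be $\PSL_2(\C)$-conjugate, so $\hat\sigma(\chi_{\bar\rho_0}) = \chi_{\bar\rho_0}$. Since $\hat\sigma$ is a regular morphism and $\bar X_0(M)$ is irreducible, $\hat\sigma(\bar X_0(M))$ is an irreducible subvariety of $\bar X(M)$ containing $\chi_{\bar\rho_0}$ and hence lies inside $\bar X_0(M)$. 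To upgrade a single fixed point to pointwise fixing, I would apply Thurston's hyperbolic Dehn surgery theorem: for all but finitely many slopes $p/q$ the filled manifold $M(p/q)$ is hyperbolic with discrete faithful character $\chi_{p/q} \in \bar X_0(M)$. Because $\sigma$ preserves the cusp framing it extends to an isometry of each such $M(p/q)$, and the same Mostow argument gives $\hat\sigma(\chi_{p/q}) = \chi_{p/q}$. This produces infinitely many fixed points of $\hat\sigma|_{\bar X_0(M)}$ on an irreducible complex algebraic curve, and any regular endomorphism of such a curve with infinitely many fixed points must be the identity.

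For the trace-up-to-sign statement, given $\chi_\rho \in X_0(M)$ set $\bar\rho = \Phi\circ\rho$. The $\PSL$ result gives $\bar A \in \PSL_2(\C)$ with $\bar\rho \circ \sigma_* = \bar A \bar\rho \bar A^{-1}$; choosing any $\SL_2(\C)$ lift $A$ of $\bar A$, the representations $A\rho A^{-1}$ and $\rho \circ \sigma_*$ cover the same $\PSL_2$ representation and hence differ by a homomorphism $\epsilon : \pi_1(M) \to \{\pm 1\}$, giving $\rho(\sigma_*(g)) = \epsilon(g) A \rho(g) A^{-1}$. Taking traces yields $\hat\sigma(\chi_\rho)(g) = \epsilon(g)\chi_\rho(g) = \pm \chi_\rho(g)$.

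The final assertion reduces to showing $\epsilon \equiv 1$ under the parity hypothesis on $\sigma_*$, and I expect this to be the main obstacle. First I would note that $\epsilon$ takes discrete values and depends algebraically on $\chi_\rho$ as the choices of $A$ and $\rho$ vary continuously, so it is locally constant and hence constant on the connected curve $X_0(M)$; thus $\epsilon$ is a single element of $\Hom(\pi_1(M), \Z/2\Z)$ depending only on $\sigma$ and the canonical component. The hypothesis that $\sigma_*(g)$ and $g$ share images in every surjection $\pi_1(M) \to \Z/2\Z$ translates into $\sigma_*$ acting trivially on $\Hom(\pi_1(M), \Z/2\Z)$, which I would use to show that changing $\rho$ to another $\SL_2$ lift $\eta\cdot\rho$ leaves $\epsilon$ unchanged, so $\epsilon$ may unambiguously be evaluated at the discrete faithful character. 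At $\chi_{\rho_0}$, the isometry $\sigma$ lifts to an element $A \in \SL_2(\C)$ realising an isometric lift $\tilde\sigma$ of $\sigma$ to $\H^3$, and comparing $\rho_0(\sigma_*(m))$ with $A\rho_0(m)A^{-1}$ on a peripheral generator $m$ (using that both are parabolic with equal traces because $\sigma_*(m)$ is conjugate to $m^{\pm 1}$) forces $\epsilon(m) = 1$. Combined with the constancy of $\epsilon$ and the fact that $\Hom(\pi_1(M), \Z/2\Z)$ for a one-cusped hyperbolic manifold is detected by peripheral data under the hypothesis, this gives $\epsilon \equiv 1$ and therefore $\hat\sigma$ fixes $X_0(M)$ pointwise.
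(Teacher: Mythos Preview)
Your approach to the $\PSL_2(\C)$ assertion is essentially the paper's: both arguments use Thurston's hyperbolic Dehn surgery theorem to produce infinitely many Dehn-filling characters on $\bar X_0(M)$, use that $\sigma$ extends over each filling, and then invoke Mostow rigidity on the closed fillings to conclude these characters are fixed by $\hat\sigma$; Zariski density (equivalently, infinitely many fixed points on an irreducible curve) finishes it. The paper routes the Mostow step through the translation-length/trace formula $\pm\tfrac12\mathrm{tr}(\gamma)=\cosh(\ell_0(\gamma)/2)$, while you go directly to ``the two discrete faithful representations are $\PSL_2(\C)$-conjugate''; these are equivalent.

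For the ``trace up to sign'' assertion your argument is also in the same spirit, but note a small technicality: passing from $\chi_{\bar\rho\circ\sigma_*}=\chi_{\bar\rho}$ to an actual conjugacy $\bar\rho\circ\sigma_*=\bar A\bar\rho\bar A^{-1}$ requires $\bar\rho$ to be irreducible. The paper sidesteps this by using directly (via Boyer--Zhang) that equality of $\PSL_2$ characters means the $\SL_2$ traces agree up to sign for every $g$; no conjugator is needed. Your version is fine after a density argument, but the paper's route is cleaner.

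There is a genuine gap in your treatment of the final assertion. Your argument computes $\epsilon(m)=1$ on a meridian and then asserts that ``$\Hom(\pi_1(M),\Z/2\Z)$ for a one-cusped hyperbolic manifold is detected by peripheral data under the hypothesis.'' That claim is not justified: it amounts to asking that $H_1(\partial M;\Z/2\Z)\to H_1(M;\Z/2\Z)$ be surjective, which holds for knot complements in $S^3$ but can fail for a general one-cusped hyperbolic $M$ (torsion in $H_1(M)$ need not come from the boundary), and nothing in the parity hypothesis on $\sigma_*$ forces it. The paper does not argue via the peripheral subgroup at all here; it instead appeals to the description of the fibres of $\Phi_\#:X(M)\to\bar X(M)$ as $\Hom(\pi_1(M),\Z/2\Z)$-orbits from Section~\ref{section:PSL}, concluding that once $\hat\sigma$ fixes the $\PSL_2$ character, the sign in $\chi_\rho(\sigma_*(g))=\pm\chi_\rho(g)$ is governed by whether $g$ and $\sigma_*(g)$ lie in the same coset under the relevant map to $\Z/2\Z$, which is exactly the stated hypothesis. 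So for the last clause you should replace the peripheral argument by this fibre/coset description.
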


\begin{proof}
Since $M$ has only one cusp, $\sigma$ induces a symmetry on $\partial M$ which retains the framing of $\partial M$.    Let $M(p/q) = M\cup_{\partial M} T$ be $p/q$ Dehn filling of $M$.  It follows that $\sigma $ extends to a symmetry of $M(p/q)$.

By Thurston's hyperbolic Dehn surgery theorem, all but finitely many Dehn surgeries on $M$ are hyperbolic. For such a $M(p/q)$,   by Mostow rigidity there is a unique complete hyperbolic structure on $M(p/q)$ therefore   representations of $\pi_1(M(p/q))$ correspond to a holonomy representation to $\PSL_2(\C)$.  Any two such representations differ only by conjugation and perhaps complex conjugation. Complex conjugation corresponds to an orientation reversal. 

By the Seifert-Van Kampen theorem, $\pi_1(M(p/q))$ is isomorphic a quotient of $\pi_1(M)$, so there is a surjection $p:\pi_1(M) \rightarrow \pi_1(M(p/q))$.   Let $\rho:\pi_1(M)\rightarrow \mathrm{(P)SL}_2(\C)$ be a representation.  Then we obtain a representation $\rho \circ p: \pi_1(M)\rightarrow \mathrm{(P)SL}_2(\C)$.  It follows that $R(M(p/q))$ naturally embeds in $R(M)$, and similarly $X(M(p/q))\subset X(M)$ and $\bar{X}(M(p/q))\subset \bar{X}(M)$. 
Thurston's theorem shows that there are faithful representations of $\pi_1(M(p/q))$ for all but finitely many $p/q$  in any Riemannian neighborhood of any discrete and faithful representation of $\pi_1(M)$.  Therefore,   there are infinitely many of these points on any $X_0(M)$ or $\bar{X}_0(M)$ and they form a Zariski dense set. 
We conclude that  it is enough to show that the Dehn surgery points on $X_0(M)$ (or $\bar{X}_0(M)$) are fixed by $\hat{\sigma}$.

Assume that $M(p/q)$ is hyperbolic and consider a loop $\gamma\in M(p/q)$.  As $\sigma$ extends to $M(p/q)$, the length of a geodesic representative for $\gamma$, and the length of a geodesic representative for $\sigma(\gamma)$ must be equal.  
Since $M(p/q)$ is a closed hyperbolic 3-manifold all loops correspond to loxodromic elements in the holonomy representation where $\gamma$ corresponds to $\pm A \in \PSL_2(\C)$.  Let $\ell_0(\gamma)$ denote the  (complex) translation length of  a geodesic representative for $\gamma$. Then (see \cite{MR1937957} p 372) 
\[
\pm \tr \gamma/2 = \cosh(\ell_0(\gamma)/2). 
\]
If a holonomy representation takes  $\sigma(\gamma)$ to a matrix $\pm B$ we see that $ \pm \tr A = \tr B.$  
Therefore, for all $\rho:\pi_1(M)\rightarrow \PSL_2(\C)$ that correspond to holomony representations of hyperbolic Dehn fillings (since $\sigma$ is orientation preserving) and all $[\gamma] \in \pi_1(M(p/q))$ we have $\chi_{\rho}([\gamma]) = \chi_{\rho}(\sigma_*([\gamma])$. In other words, $\hat{\sigma}(\chi_{\rho})=\chi_{\rho}$.  Therefore, $\hat{\sigma}$ is fixed in $\bar{X}_0(M)$.

It follows that $\hat{\sigma}:X_0(M) \rightarrow X_0(M)$ satisfies  $\hat{\sigma}(\chi_{\rho}(g)) = \chi_{\rho}(\sigma_*(g)) = \pm \chi_{\rho}(g)$ for all $g\in \pi_1(M)$. By the discussion in Section~\ref{section:PSL} we conclude that the signs will be equal if $g$ and $\sigma(g)$ are in the same coset.  By definition, this occurs when $g$ and $\sigma_*(g)$ have the same image in the map to $\Z/2\Z$.   
\end{proof}

If $\sigma$ is a symmetry that fixes unoriented free homotopy classes of loops, then the induced action $\hat{\sigma}$ on $\bar{X}(M)$ is trivial;  the induced action on the representation variety sends an element of $\mathrm{(P)SL}_2(\C)$ to a conjugate or the inverse of a conjugate.  Indeed there are examples \cite{MR3078072} where the induced action $\hat{\sigma}$ is trivial even when $\sigma$ does not fix unoriented free homotopy classes of loops.

When $M$ is a knot complement in $S^3$ then the condition that for all $g\in \pi_1(M)$,  $\sigma_*(g)$ and $g$ have the same image in the surjection $\pi_1(M) \rightarrow \Z/2\Z$ holds, so we have the following immediate corollary.
\begin{cor}\label{cor:symmetiesandcanonicalcomponent}
 Let $K$ be a hyperbolic knot in $S^3$ with exterior $M$.  A canonical component   of the $\PSL_2(\C)$ character variety, $\bar{X}_0(M)$ is a subset of the fixed point set of $\hat{\sigma}$. A  canonical component of the $\SL_2(\C)$ character variety, $X_0(M)$, is  a subset of the fixed point set of $\hat{\sigma}$. 
\end{cor}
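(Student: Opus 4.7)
The plan is to deduce this corollary directly from Theorem~\ref{thm:symmetiesandcanonicalcomponent}. The $\PSL_2(\C)$ statement is immediate: Theorem~\ref{thm:symmetiesandcanonicalcomponent} asserts unconditionally that $\bar{X}_0(M)$ lies in the fixed point set of $\hat{\sigma}$, so nothing further is required.

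For the $\SL_2(\C)$ statement, I need to verify the extra hypothesis of Theorem~\ref{thm:symmetiesandcanonicalcomponent}: that for every $g \in \pi_1(M)$, the elements $\sigma_*(g)$ and $g$ have the same image under the surjection $\pi_1(M) \to \Z/2\Z$. The key point is that for a knot complement $M = S^3 \setminus K$ we have $H_1(M;\Z) \cong \Z$ (generated by the meridian), so the abelianization map $\pi_1(M) \to \Z$ is the unique surjection onto a cyclic group, and any surjection $\pi_1(M) \to \Z/2\Z$ must factor through it as reduction mod $2$.

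Next I would analyze the action of $\sigma_*$ on $H_1(M;\Z) \cong \Z$. Since $\sigma$ is an orientation-preserving homeomorphism of $M$ (equivalently, an orientation-preserving homeomorphism of $S^3$ sending $K$ to itself), the Gordon--Luecke theorem cited earlier in the paper ensures that $\sigma$ carries a meridian to a meridian, so the induced map on $H_1(M;\Z) \cong \Z$ is multiplication by $\pm 1$. In either case, for any $g \in \pi_1(M)$ with abelianization $n(g) \in \Z$, one has $n(\sigma_*(g)) = \pm n(g) \equiv n(g) \pmod{2}$. Therefore $\sigma_*(g)$ and $g$ agree modulo $2$, which is exactly the hypothesis needed to invoke the $\SL_2(\C)$ conclusion of Theorem~\ref{thm:symmetiesandcanonicalcomponent}.

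There is no real obstacle here; the corollary is essentially a bookkeeping remark once Theorem~\ref{thm:symmetiesandcanonicalcomponent} is in hand. The only subtlety worth flagging in the write-up is to justify why one may reduce mod $2$ through $H_1$ rather than worrying about arbitrary signs in the $\SL_2(\C)$ lifts; this is handled by the observation above that multiplication by $-1$ on $\Z$ is the identity after reduction mod $2$.
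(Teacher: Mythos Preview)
Your proposal is correct and matches the paper's approach: the paper simply asserts that for a knot complement in $S^3$ the mod~$2$ condition of Theorem~\ref{thm:symmetiesandcanonicalcomponent} holds and declares the corollary immediate. Your write-up supplies the missing justification (abelianization $\Z$, $\sigma_*$ acting as $\pm 1$, hence identity mod~$2$); note that invoking Gordon--Luecke is not strictly necessary since any automorphism of $H_1(M;\Z)\cong\Z$ is $\pm 1$, though it does no harm.
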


\section{ $\mathrm{(P)SL}_2(\C)$ Culler-Shalen Theory for Orbifolds }\label{section:PSLOrbs}

We have proven Theorem~\ref{orbifoldTreeAction}, which says that  given an action of $\pi_1^{orb}(Q)$ on a tree that is non-trivial and without inversions, we can detect an essential 2-suborbifold that is dual to the action. Now we show how to use valuations from $\SL_2(\C)$ and $\PSL_2(\C)$ character varieties to obtain such actions. Specifically, we show that the valuation theory aspect of Culler and Shalen's work holds for orbifolds in both the $\SL_2(\C)$ and $\PSL_2(\C)$ settings.

Let $F$ be a field with discrete valuation $v:F^*\rightarrow \Z$.  The valuation ring is 
\[ \Ox_v=\{0\} \cup \{ a\in F^*: v(a)\geq 0\}. \]We construct a simplicial tree $T=T_{F,v}$ on which $\SL_2(F)$ and $\PSL_2(F)$ act simplicially and without inversions. 

The tree $T$ is defined as follows.  The vertices of $T$ are the homothety classes of lattices in $F^2$ and for lattices $\Lambda_1$ and $\Lambda_2$ the distance between the vertices $[\Lambda_1]$ and $[\Lambda_2]$ is $v(\det A)$ where $A:F^2\rightarrow F^2$ is a linear transformation taking $\Lambda_1$ to $\Lambda_2$.  The group $\SL_2(F)$ acts on $T$ by linear automorphisms.  
By Theorem~\ref{orbifoldTreeAction},   an action that is non-trivial and without inversions (non-canonically) identifies essential $2$-suborbifolds dual to it. 
Note that the action of $-I\in \SL_2(F)$ fixes every vertex of $T$ as it takes any lattice $\Lambda$ to itself.  Let $\Phi:\SL_2(F) \rightarrow \PSL_2(F)$ be the natural map.  
It follows that there is a well-defined action of $\PSL_2(F)$ on $T$ as follows.  Let $\bar{A} \in \PSL_2(F)$; we define $\bar{A}\cdot T= A\cdot T$ where $A\in \Phi^{-1}(A)$.  (Given $\psi:\Gamma \rightarrow \PSL_2(F)$, we are not assuming that $\psi(\Gamma)\subset \PSL_2(F)$ lifts to $\SL_2(F)$; we are simply using the pull-back.)

The action of an element $A \in \SL_2(F)$ on $T$ fixes a vertex if and only if $A$ is conjugate in $\mathrm{GL}_2(F)$ to an element in $\SL_2(\Ox_v)$.  This is equivalent to   $\tr(A) \in  \Ox_v$.  Therefore if there is an $A \in \SL_2(F)$ such that $v(A)<0$ this $A$ cannot stabilize any vertex and the action is non-trivial.

Let ${\rm GL}_2(F)^+$ be the kernel of the map ${\rm{GL}}_2(F)\rightarrow \Z \rightarrow \Z/2\Z$. 
By \cite[Section II 1.2-1.3]{MR0476875}, the group ${\rm GL}_2(F)$ acts with inversions, but  ${\rm GL}_2(F)^+$ acts on $T$ without inversions.  Since, $\SL_2(F)\subset {\rm GL}_2(F)^+,$   both $\SL_2(F)$ and $\PSL_2(F)$ act without inversions.

With Theorem~\ref{orbifoldTreeAction} we have the following, as in \cite[Lemma 1]{MR1835066}.  
\begin{prop}\label{prop:valuationgivesessential}
Let $v$ be a discrete valuation on a field $F$ and let $T=T_{F,v}$ be the associated  tree.  If a  finite volume 3-orbifold $Q$  has a representation $\psi:\pi_1^{orb}(Q)\rightarrow \mathrm{(P)SL}_2(F)$ with $v(\tr(\psi(\gamma)))<0$ for some $\gamma \in \pi_1^{orb}(Q)$ then this induces a non-trivial action of $\pi_1^{orb}(Q)$ on $T$ and dual to this action is an essential $2$-suborbifold of $Q$.
\end{prop}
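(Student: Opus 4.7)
The plan is to reduce to Theorem~\ref{orbifoldTreeAction} by extracting from the representation $\psi$ a non-trivial action of $\pi_1^{orb}(Q)$ on the Bass-Serre tree $T = T_{F,v}$ without inversions. First I would compose $\psi$ with the natural action of $\mathrm{(P)SL}_2(F)$ on $T$ recalled earlier in this section; this immediately gives a simplicial action of $\pi_1^{orb}(Q)$ on $T$ by linear automorphisms of lattices.

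For non-triviality, the excerpt records the standard fact that $A \in \SL_2(F)$ fixes a vertex of $T$ if and only if $A$ is conjugate in $\mathrm{GL}_2(F)$ to an element of $\SL_2(\Ox_v)$, which is equivalent to $\tr(A) \in \Ox_v$. The hypothesis $v(\tr(\psi(\gamma))) < 0$ therefore forces $\psi(\gamma)$ to fix no vertex of $T$, so in particular no single vertex is fixed by all of $\pi_1^{orb}(Q)$, and the action is non-trivial. In the $\PSL_2(F)$ setting the trace is computed via the pull-back through $\Phi$, and since $v(\tr(A)) = v(\tr(-A))$ the condition is insensitive to the sign ambiguity, so the same argument applies. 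That both $\SL_2(F)$ and $\PSL_2(F)$ act on $T$ without inversions is also recorded in the excerpt, since both lie in $\mathrm{GL}_2(F)^+$.

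Having produced a non-trivial action of $\pi_1^{orb}(Q)$ on $T$ without inversions, I would invoke Theorem~\ref{orbifoldTreeAction} to obtain an essential $2$-suborbifold in $Q$ dual to this action. The only conceptual content is the non-triviality check; the existence of the dual suborbifold is precisely the output of Theorem~\ref{orbifoldTreeAction}, so no further obstacle remains. The step most deserving of care is the $\PSL_2$ case, where one must confirm that the valuation-based stabilizer criterion descends through $\Phi$ — but this is immediate from the sign-invariance of $v \circ \tr$ and the fact that $-I$ fixes $T$ pointwise.
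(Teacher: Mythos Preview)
Your proposal is correct and matches the paper's approach exactly: the paper does not give a separate proof of this proposition, but simply states that it follows from Theorem~\ref{orbifoldTreeAction} together with the preceding discussion in Section~\ref{section:PSLOrbs}, which lays out precisely the ingredients you identify (the trace criterion for vertex stabilizers giving non-triviality, the containment in $\mathrm{GL}_2(F)^+$ giving no inversions, and the handling of the $\PSL_2$ case via $-I$ acting trivially on $T$).
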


Similarly, the following is essentially \cite[Corollary 3]{MR1835066} using  \cite[Lemma 2]{MR1835066} and follows from Lemma~\ref{lemma:stabilizers}.
\begin{prop}\label{prop:generaltype12}
Let $Q$ be finite volume   3-orbifold. 
Let $v$ be a discrete valuation on a field $F$,  let $T=T_{F,v}$ be the associated  tree and let $\psi:\pi_1^{orb}(Q)\rightarrow \mathrm{(P)SL}_2(F)$ such that there is a $\gamma \in \pi_1^{orb}(Q)$ with $v(\tr(\psi(\gamma)))<0$. Then
\begin{enumerate}
\item There is a unique primitive element $\gamma \in \pi_1^{orb}(\partial Q)$ such that $v(\tr(\psi(\gamma))) \geq 0$.  Then $Q$ contains an essential $2$-suborbifold dual to the action with peripheral element $\gamma$.  
\item For all $\gamma \in \pi_1^{orb}(\partial Q)$, we have $v(\tr(\psi(\gamma)))\geq 0$.  Then $Q$ contains a closed essential $2$-suborbifold dual to the action.
\end{enumerate}
\end{prop}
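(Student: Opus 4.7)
My plan is to produce the dual $2$-suborbifold via Proposition~\ref{prop:valuationgivesessential} and then analyze the restriction of the tree action to the peripheral orbifold subgroup $P=\pi_1^{orb}(\partial Q)$. As noted in the paragraph preceding Proposition~\ref{prop:valuationgivesessential}, an element $A\in\mathrm{(P)SL}_2(F)$ fixes a vertex of $T=T_{F,v}$ if and only if $v(\tr(A))\geq 0$, so cases~(1) and~(2) correspond exactly to whether some element of $\psi(P)$ acts hyperbolically on $T$.

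Case (1): some $\gamma_0\in P$ has $v(\tr(\psi(\gamma_0)))<0$, so $\psi(\gamma_0)$ is hyperbolic on $T$ with a unique invariant axis $\mathcal A$. The standard structure theory of abelian subgroups of tree automorphisms gives that every hyperbolic element of $\psi(P)$ has axis $\mathcal A$, and translation length along $\mathcal A$ defines a homomorphism $\ell\colon P\to\Z$ with infinite cyclic image. Its kernel consists precisely of those $\gamma\in P$ for which $\psi(\gamma)$ fixes $\mathcal A$ pointwise, equivalently $v(\tr(\psi(\gamma)))\geq 0$. Since $P$ has rank two for a flexible cusp, $\ker\ell$ has rank one and is generated up to inverse by a unique primitive peripheral element $\gamma$. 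By Lemma~\ref{lemma:stabilizers}(\ref{lemma:stabilizers:edge}), the peripheral subgroup of each boundary curve of the dual suborbifold lies in an edge stabilizer of $T$, hence in $\ker\ell$, so its slope equals the slope of $\gamma$.

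Case (2): every element of $\psi(P)$ is elliptic. Two commuting elliptic tree automorphisms share a fixed vertex (disjoint fixed subtrees would contain a unique closest-point pair that commutativity forces each element to fix), so passing to a finite-index abelian subgroup $P_0\cong\Z^2$ and applying Helly's property for subtrees yields a subtree $F_0\subset T$ fixed pointwise by $\psi(P_0)$; since $P/P_0$ is finite and preserves $F_0$, it fixes a vertex $v_0\in F_0$, which is therefore fixed by all of $\psi(P)$. I would then revisit the construction of $\tilde f\colon\tilde Q\to T$ in the proof of Theorem~\ref{orbifoldTreeAction} and choose $h_0\colon S^0\to T^0$ to send every $0$-simplex in a lift of a cusp collar of $\partial Q$ to $v_0$; this is compatible with the requirement $\Gamma_s\cdot h_0(s)=h_0(s)$ since $\Gamma_s\subset P$ on the collar. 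Extending as in Lemma~\ref{treeproof2} keeps the image of the collar inside the open star of $v_0$, which contains no midpoint of any edge, so the dual suborbifold misses $\partial Q$ and is closed.

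The main obstacle I expect is this last step, the closed-suborbifold construction: classically one homotopes an already-produced essential surface off of $\partial M$, but any such homotopy in the orbifold setting must be equivariant and compatible with the local groups $\Gamma_s$ along $\partial\tilde Q$. I plan to sidestep the difficulty by weaving the condition into the initial data $h_0$ from the outset, which is permissible because Lemma~\ref{treeproof2} only requires $\Gamma_s$-invariance of $h_0(s)$, a property automatically satisfied by any $\psi(P)$-fixed vertex when $\Gamma_s\subset P$.
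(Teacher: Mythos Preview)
Your proposal is correct and matches the route the paper indicates: the paper gives no detailed argument but simply says the proposition ``is essentially \cite[Corollary~3]{MR1835066} using \cite[Lemma~2]{MR1835066} and follows from Lemma~\ref{lemma:stabilizers},'' and your sketch unpacks exactly that in the orbifold setting---the axis/translation-length analysis of the peripheral group is Schanuel--Zhang's Lemma~2, and the link between edge stabilizers and the boundary slope of the dual suborbifold is Lemma~\ref{lemma:stabilizers}.

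Two small points worth tightening. In Case~(1), when $\partial Q$ is a pillowcase rather than a torus the group $P$ is only virtually abelian (it is $\Z^2\rtimes\Z/2$), and the order-two elements outside $\Z^2$ may reverse the orientation of the axis, so $\ell$ need not be a homomorphism on all of $P$; run the translation-length argument on the index-two $\Z^2$ subgroup instead (every hyperbolic element already lies there, since torsion elements are elliptic), and the resulting primitive $\gamma$ is well defined up to inversion. In Case~(2), after you arrange the initial dual suborbifold to miss the collar, you should note that the subsequent compressions and component-deletions in the proof of Theorem~\ref{orbifoldTreeAction} all modify $\tilde f$ only inside $p^{-1}(B)$ for balls $B$ in the interior of $Q$, so the suborbifold remains disjoint from $\partial Q$ throughout the essentialization.
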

Note that $(1)$ corresponds to having a unique primitive element $\gamma$ with $\psi(\gamma) \in \Ox_F$, and $(2)$ corresponds to all $\psi(\gamma) \in \Ox_F$. Now we discuss valuations from ANI points and points at infinity in a bit more detail.

\subsection{ANI-Point}

See \cite[page 51]{MR1835066}  for a discussion of this in the $\SL_2(\C)$ case for manifolds.

Let $\Gamma$ be a finitely generated group,  and assume that there is a representation $\rho:\Gamma \rightarrow \SL_2(F)$, or $\bar{\rho}:\Gamma  \rightarrow \PSL_2(F)$ where $F$ is a number field.  An {\em algebraic non-integral or ANI point} is  a  $\chi_{\rho}$ or $\chi_{\bar{\rho}}$ where there is $\gamma \in \Gamma$ such that $\tr(\rho(\gamma))\not \in \Ox_F$, or $\pm \tr(\bar{\rho}(\gamma)) \not \in \Ox_F$. This   is equivalent to the definition given earlier in Sections~\ref{section:SL} and ~\ref{section:PSL}. 
Therefore, there is some prime ideal $\pi$ of $\Ox_F$ such that the $\pi$-adic valuation $v$ has $v(  \tr(\rho(\gamma)))<0$, or similarly $v(\pm \tr (\bar{\rho}(\gamma)))<0$.  Since $v(-1)=1$   this is well-defined.  
It immediately follows that this action is non-trivial and without inversions.

 In this case, we can rephrase Proposition~\ref{prop:generaltype12} as the following.  Given an ANI point $x$ in $X(Q)$ (or $\bar{X}(Q)$) then either
\begin{enumerate}
\item There is a unique primitive element $\gamma\in \pi_1^{orb}(\partial Q)$ such that $I_{\gamma}(x)$ (or $f_{\gamma}(x)$) is in $\C$.  Then $Q$ contains an essential 2-suborbifold dual to the action with peripheral element $\gamma$. \\
or 
\item For all $\gamma \in \pi_1^{orb}(\partial Q)$ we have $I_{\gamma}(x)$  (or $f_{\gamma}(x)$) in $\C$. Then $Q$ contains a closed  essential 2-suborbifold dual to the action. 
\end{enumerate}
This is analogous to the statement for manifolds.

\subsection{Ideal Point}
 
Let $\Gamma$ be a finitely generated group.    Let $X$ be a curve component of $X(\Gamma)$ and identify $X$ with a smooth projective model.  Then $I_{\gamma}$ is an element of the function field of $X$, and we can consider $I_{\gamma}$ to be a meromorphic function.  For a fixed $x\in X$ there is a valuation $v_x$ on the function field $\C(X)$ which assigns to each rational function its order at $x$.  This valuation extends to a valuation $w$ on $K$, the function field of the subvariety $R$  of $R(M)$ that maps to $X$ under the trace map.  Using $w$,  $\SL_2(K)$ acts on $T=T_w$ by restricting the action of $\GL_2(K)$.

 Using the  tautological representation $\mathcal{P}:\Gamma \rightarrow \SL_2(K)$,  the action of $\SL_2(K)$ on $T$ can be pulled back via $\mathcal{P}$ to an action of $\Gamma$ on $T$.  This action is without inversions, and is non-trivial if $x$ is an ideal point.  
This works as follows.  The tautological representation  is $\mathcal{P}:\Gamma \rightarrow \SL_2(\C[X])$ where $\C[X]$ is the coordinate ring of $X$, is defined by 
\[
\mathcal{P}(\gamma) = \mat{a_{\gamma}}{b_{\gamma}}{c_{\gamma}}{d_{\gamma}}
\]
where the coordinates are functions of the representations.  For example, $a_{\gamma}$ is the element of the coordinate ring $\C[X]$ that for an $x \in X$ produces $a_{\gamma}(x)\in \C$. 
This gives an action of $\Gamma$ on $T_w$ by 
\[ \gamma \cdot [\Lambda] = [\mathcal{P}(\gamma) \cdot \Lambda ] \]
specifically, if $\{e,f\}$ is a basis for $\Lambda$ we have $ \gamma \cdot [\Lambda] $ given by 
\[ \mat{a_{\gamma}}{b_{\gamma}}{c_{\gamma}}{d_{\gamma}} \left(  \begin{array}{c} e \\ f \end{array} \right) = \left(  \begin{array}{c}   a_{\gamma} e +b_{\gamma}f \\ c_{\gamma}e + d_{\gamma} f\end{array} \right)
\]
which determines a basis for a lattice.  (The result is the homothety class of this lattice.) Note that $a_{\gamma}, b_{\gamma}, c_{\gamma}, d_{\gamma}, e,$ and $f$ are all in $K$.

Boyer and Zhang \cite[Section 4]{MR1670053} extended this theory to $\PSL_2(\C)$ character varieties. Using a central $\Z/2\Z$ extension, they define a compatible tautological representation $\bar{\mathcal{P}}: \Gamma \rightarrow \PSL_2(\C)$.  If there is a curve component $X$ in $X(\Gamma)$ that under the natural map, maps to $\bar{X}$ in $\bar{X}(\Gamma)$, then  we see that for an order of vanishing valuation (at an ideal point), $I_{\gamma}$ blows up at an ideal point $x\in X$ if and only if $f_{\gamma}$ blows up at the image of $x$ in $\bar{X}$.  

The construction is as follows.  
 They show (Section 3) that for a finitely generated group $\Gamma$ and representation $\bar{\rho}:\Gamma \rightarrow \PSL_2(\C)$  there is a central extension $\phi:\hat{\Gamma}\rightarrow \Gamma$ of $\Gamma$ by $\Z/2\Z$ and a representation $\hat{\rho}:\hat{\Gamma}\rightarrow \SL_2(\C)$ such that the commutative diagram 
\[
\begin{tikzcd}
1 \ar[r] & \Z/2\Z \ar[d,equal] \ar[r] & \hat{\Gamma} \ar[d, "\hat{\rho}"]  \ar[r,"\phi"]& \Gamma \ar[d,"\bar{\rho}"] \ar[r] & 1 \\
1 \ar[r]  & \Z/2\Z \ar[r]  & \SL_2(\C) \ar[r, "\Phi"]  & \PSL_2(\C) \ar[r]  & 1 
\end{tikzcd}
\]
is exact. We call $(\hat{\Gamma}, \phi, \hat{\rho})$ the central $\Z/2\Z$ extension of $\Gamma$ lifting $\bar{\rho}$.  As mentioned in \cite{MR1670053} the central extensions of $\Gamma$ by $\Z/2\Z$ are classified by $H^2(\Gamma, \Z/2\Z)$  and $\bar{\rho}$ determines an element $w_2(\bar{\rho}) \in H^2(\Gamma, \Z/2\Z)$.  This is zero if and only if $\hat{\Gamma} \cong \Gamma \times \Z/2\Z$.  (That is, if $\hat{\rho}$ lifts to an element of $R(\Gamma)$.)  When $H^2(\Gamma, \Z/2\Z)$ is trivial, $\hat{\Gamma}\cong \Gamma \times \Z/2\Z$ for every $\bar{\rho}$ and each $\bar{\rho}$ lifts to $R(\Gamma)$.

Let $\Phi:\SL_2(\C)\rightarrow \PSL_2(\C)$ be the natural quotient. Let $\bar{X} \subset \bar{X}(\Gamma)$ be a curve. Then there is a subvariety $\bar{R}\subset \bar{t}^{-1}(\bar{X})$ which is 4-dimensional. Let $\bar{\rho}$ be a smooth point in $\bar{R}$ and $(\hat{\Gamma}, \phi, \hat{\rho})$ the central extension. Then there is a variety $S\subset R(\hat{\Gamma})$ containing $\hat{\rho}$ and a dominating regular map $\phi_*:S\rightarrow \bar{R}$. The function field $\C(S)$ is a finitely generated extension of $\C(\bar{X})$ by identifying $f\in \C(\bar{X})$ with $f\circ \bar{t} \circ \phi_* \in \C(S)$.

There is a tautological representation $\hat{\mathcal{P}}:\hat{\Gamma}\rightarrow \SL_2( \C(S))$ as discussed above.   The tautological representation $\bar{\mathcal{P}}:\Gamma \rightarrow \PSL_2(\C(S))$ is given by 
\[ 
\bar{\mathcal{P}}(\gamma)   = \Phi( \hat{\mathcal{P}}(\phi^{-1}(\gamma))).
\]
So that 
\[
\bar{\mathcal{P}}(\gamma)(\hat{{\rho}}) = \bar{\rho}(\gamma)
\]
for every $\gamma\in \Gamma$ and $\bar{\rho}\in \bar{R}$ and $\hat{\rho}\in S$ such that $\phi_*( \hat{\rho})=\bar{\rho}$.

We can rephrase Proposition~\ref{prop:generaltype12} as the following.  Given an ideal point $x$ in $X(Q)$ (or $\bar{X}(Q)$) then either
\begin{enumerate}
\item There is a unique primitive element $\gamma\in \pi_1^{orb}(\partial Q)$ such that $I_{\gamma}(x)$ (or $f_{\gamma}(x)$) is in $\C$.  Then $Q$ contains an essential 2-suborbifold dual to the action with peripheral element $\gamma$. \\
or 
\item For all $\gamma \in \pi_1^{orb}(\partial Q)$ we have $I_{\gamma}(x)$  (or $f_{\gamma}(x)$) in $\C$. Then $Q$ contains a closed  essential 2-suborbifold dual to the action. 
\end{enumerate}
This is analogous to \cite[Proposition 4.7]{MR1670053}

\section{The proof of Theorem~\ref{thm:maintheorem}} \label{mainsection}

If $F$ is detected by an action of $\pi_1^{orb}(Q)$ on a tree $T$ we now show that there is an  action of  $\pi_1(M)$ on $T$ which detects a symmetric essential surface in $M$.  
 Let $p_Q:\tilde{Q}\rightarrow Q$, $p_M:\tilde{Q}\rightarrow M$, and $p:M\rightarrow Q$ be the covering maps so that  $p\circ p_M=p_Q$. 
The action of $\pi_1^{orb}(Q)$ on $T$ gives a map $\tilde{f}:\tilde{Q}\rightarrow T$, which induces the maps $f_Q= \tilde{f} \circ p_Q^{-1} $ and $f_M= \tilde{f} \circ p_M^{-1}$ where $f_Q:Q\rightarrow T$ and $f_M:M\rightarrow T$.  
\[
\begin{tikzcd}
\tilde{Q} \ar[dr,"\tilde{f}"]  \ar[d,"p_M"]   \arrow[dd, bend right=60, swap,  "p_Q"]   & {} \\
M \ar[r,"f_M"]  \ar[d,"p"]  & T   \\
Q \ar[ur, swap,"f_Q"] & {}
\end{tikzcd}
\]
The map $f_Q$ is  well-defined because of the $\pi_1^{orb}(Q)$ equivariance of $\tilde{f}$.  The same is true of $f_M$ because $\pi_1(M)$ injects into $\pi_1^{orb}(Q)$.

\begin{lemma}\label{lemma:liftedsurfaceisdetected} 

If $F_Q$ is detected by an action of $\pi_1^{orb}(Q)$ on a tree $T$ then $p^{-1}(F_Q)$ is a symmetric essential surface in $M$ detected by the induced action of $\pi_1(M)$ on $T$.  
\end{lemma}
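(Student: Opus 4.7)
The plan is to reuse the very same $\pi_1^{orb}(Q)$-equivariant map $\tilde f:\tilde Q\to T$ that realizes $F_Q$ as a dual $2$-suborbifold (Lemma~\ref{treeproof2}) to realize $p^{-1}(F_Q)$ as a dual surface for the induced $\pi_1(M)$-action on $T$. Since $M$ is a manifold orbifold cover of the good orbifold $Q$, we identify $\tilde Q$ with the universal cover of $M$ via $p_M$. Restricting along the inclusion $\pi_1(M)\hookrightarrow\pi_1^{orb}(Q)$ gives an induced $\pi_1(M)$-action on $T$ which is still without inversions, and $\tilde f$ remains $\pi_1(M)$-equivariant and transverse to the midpoint set $E$.

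First I would check the set-theoretic identity $p^{-1}(F_Q)=p_M(\tilde f^{-1}(E))$. Setting $\tilde F:=\tilde f^{-1}(E)\subset \tilde Q$ and $F_M:=p_M(\tilde F)\subset M$, the $\pi_1^{orb}(Q)$-invariance of $\tilde F$ makes $F_M$ invariant under the deck group $G\cong\pi_1^{orb}(Q)/\pi_1(M)$ of $p:M\to Q$. Since $p(F_M)=p_Q(\tilde F)=F_Q$, this $G$-invariance forces $F_M=p^{-1}(F_Q)$, and the symmetry of $F_M$ is exactly this $G$-invariance. By the very definition of ``dual'' applied to the $\pi_1(M)$-equivariant map $\tilde f$, the surface $F_M$ is dual to the induced $\pi_1(M)$-action on $T$.

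Next I would confirm that this induced action is non-trivial, so that the detection is genuine. Because $p:M\to Q$ is a regular finite cover with deck group $G$, the subgroup $\pi_1(M)$ is normal and of finite index in $\pi_1^{orb}(Q)$. If $\pi_1(M)$ had a global fixed vertex on $T$, then its fixed-point set $\mathrm{Fix}_{\pi_1(M)}(T)$ would be a non-empty subtree, $G$-invariant by normality of $\pi_1(M)$; the finite group $G$ must then fix a vertex in this subtree, yielding a global fixed vertex for $\pi_1^{orb}(Q)$ and contradicting non-triviality of the original action.

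Finally I would invoke Lemma~\ref{suborbliftinglemma}: with $Q$ good, orientable, and orbifold-irreducible, with $M$ irreducible, and with $F_Q$ essential and containing only finitely many singular points (it is transverse to $\Sigma(Q)$ by the construction of Section~\ref{actionontreesection}, and $\Sigma(Q)$ is a compact trivalent graph), the lift $p^{-1}(F_Q)$ is a symmetric essential surface in $M$. The main delicate point in the plan is the non-triviality verification; the remaining ingredients are a routine unwinding of definitions together with an appeal to the lifting lemma.
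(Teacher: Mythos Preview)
Your proposal is correct and follows essentially the same route as the paper: reuse the $\pi_1^{orb}(Q)$-equivariant map $\tilde f$, observe it is automatically $\pi_1(M)$-equivariant, identify $F_M=p_M(\tilde f^{-1}(E))=p^{-1}(F_Q)$, and invoke Lemma~\ref{suborbliftinglemma} for essentiality and symmetry. You are in fact more thorough than the paper on two points it leaves implicit: the equality $F_M=p^{-1}(F_Q)$ via $G$-invariance of $\tilde F$, and the non-triviality of the induced $\pi_1(M)$-action (your normal-subgroup/fixed-subtree argument is a nice touch the paper omits).
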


\begin{proof}

By Section~\ref{actionontreesection}, the essential 2-suborbifold $F_Q$ of $Q$ is associated to a $\pi_1^{orb}(Q)$ equivariant map $\tilde{f}:\tilde{Q}\rightarrow T$ which is transverse of $E$. The map $\tilde{f}$ induces the maps $f_Q$ and $f_M$ as above.  

By Theorem~\ref{orbifoldTreeAction},    $\tilde{F}=\tilde{f}^{-1}(E)$ is an essential surface in $\tilde{Q}$ and $F_Q=p_Q(\tilde{F})=f_Q^{-1}(E).$
 Now, $\tilde{f}$ is $\pi_1^{orb}(Q)$ equivariant, and since $\pi_1(M)$ injects into $\pi_1^{orb}(Q)$ it is also $\pi_1(M)$ equivariant.     By Lemma~\ref{suborbliftinglemma},  $F_M=p_M(\tilde{F})=f_M^{-1}(E)$ is a symmetric  essential surface in $M$  because $F_M=p^{-1}(F_Q)$.   
Because $F_M=f_M^{-1}(E)$ it is the surface detected by the induced action of $\pi_1(M)$ on $T$. 
\end{proof}

\begin{lemma}\label{lemma:SL=PSL}
Assume that $\bar{X}(M)$ lifts to $X(M)$, and let  $x\in X(M)$ or   an ideal point of $X(M)$. Let $\bar{x}$ be the image of $x$ under the covering map $X(M)\rightarrow \bar{X}(M)$.  Assume that  associated to $x$ there is 
a discrete valuation $v$  on a field $F$ where $T$ is the associated $\SL_2$-tree, and there is a representation $\phi:\pi_1(M) \rightarrow \SL_2(F)$ with $v(\tr(\phi(\gamma)))<0$ for some $\gamma \in \pi_1(M)$.  Then associated to $\bar{x}$ there is an action on  $T$ and if $x$ detects a surface $F$ then $\bar{X}$ also detects $F$.

  Similarly, assume that  associated to $\bar{x}$ there is 
a discrete valuation $v$  on a field $F$ where $T$ is the associated  tree, and there is a representation $\phi:\pi_1(M) \rightarrow \PSL_2(F)$ with $v(\tr(\phi(\gamma)))<0$ for some $\gamma \in \pi_1(M)$. 
  Then   $x$ has an action on $T$  and if $\bar{x}$ detects $F$ then $x$ also detects $F$.

\end{lemma}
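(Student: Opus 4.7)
The plan hinges on the observation made in Section~\ref{section:PSLOrbs} that $-I \in \SL_2(F)$ fixes every vertex of the Bass--Serre tree $T$, so the $\SL_2(F)$-action on $T$ factors through the quotient $\Phi:\SL_2(F)\to \PSL_2(F)$. Consequently, if $\phi:\pi_1(M)\to\SL_2(F)$ and $\bar{\phi}=\Phi\circ\phi$, then $\phi$ and $\bar{\phi}$ induce the \emph{same} action of $\pi_1(M)$ on $T$. Since the dual-surface construction of Section~\ref{actionontreesection} depends only on the tree action (and the choice of an equivariant map to $T$), the surfaces detected by $\phi$ and by $\bar{\phi}$ must agree. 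Both assertions of the lemma thus reduce to producing the appropriate representation and checking that the relevant trace hypothesis transfers across $\Phi$.

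For the first direction, I would set $\bar{\phi}=\Phi\circ\phi$, which represents the character $\bar{x}$. The identity $f_\gamma(\bar{x}) = (\tr\phi(\gamma))^2 - 4$ together with $v(\tr\phi(\gamma))<0$ and $v(4)\geq 0$ forces $v(f_\gamma(\bar{x})) = 2v(\tr\phi(\gamma)) < 0$, so by Proposition~\ref{prop:valuationgivesessential}, $\bar{\phi}$ yields a non-trivial action on $T$; by the observation above this action agrees with the one from $\phi$, and hence the same surface is detected. For the converse, the hypothesis that $\bar{X}(M)$ lifts to $X(M)$ is exactly what lets me lift $\bar{\phi}$ to some $\phi:\pi_1(M)\to\SL_2(F)$ with $\Phi\circ\phi=\bar{\phi}$, producing a point $x\in X(M)$ over $\bar{x}$. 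Since $(\tr\phi(\gamma))^2 = (\tr\Phi^{-1}(\bar{\phi}(\gamma)))^2$, the same algebraic step gives $v(\tr\phi(\gamma))<0$, and the two induced tree actions coincide as before.

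The genuinely delicate case is the ideal point, where the valuation is an order of vanishing on a function field and the representations in play are the $\SL_2$ and $\PSL_2$ tautological representations $\mathcal{P}$ and $\bar{\mathcal{P}}$ of Section~\ref{section:PSLOrbs}. Here the hypothesis that $\bar{X}(M)$ lifts to $X(M)$ is what forces the Boyer--Zhang central $\Z/2\Z$ extension to collapse on the component carrying $\bar{x}$, so that after identifying a curve $X_0\subset X(M)$ mapping onto $\bar{X}_0\subset \bar{X}(M)$ one has $\bar{\mathcal{P}} = \Phi\circ \mathcal{P}$ on the appropriate function field. The main obstacle I expect is the bookkeeping around this identification: one must check that the order-of-vanishing valuations on $\C(X_0)$ and $\C(\bar{X}_0)$ at $x$ and $\bar{x}$ differ only by the ramification index of the cover $X_0\to \bar{X}_0$, so that strict negativity on either side is equivalent to strict negativity on the other. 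Once these identifications are in place, the tree actions of $\mathcal{P}$ and $\bar{\mathcal{P}}$ literally coincide and the identification of detected surfaces follows exactly as in the affine case.
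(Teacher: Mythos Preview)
Your proposal is correct and follows the same approach as the paper: the key observation that $-I$ acts trivially on the Bass--Serre tree, so the $\SL_2$ and $\PSL_2$ actions of $\pi_1(M)$ coincide, is exactly what the paper invokes. The paper's own proof is considerably more terse---it simply asserts that for affine points the representations differ only by signs (hence give identical valuations), that for ideal points the order-of-vanishing valuations are the same, and then cites the triviality of $-I$ on $T$; your version fills in the algebraic verification $v(f_\gamma)=2v(\tr\phi(\gamma))<0$ and flags the ramification-index subtlety at ideal points, both of which the paper leaves implicit.
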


\begin{proof}
The points $x$ and $\bar{x}$ are either both affine points or both ideal points. 
If they are affine points, the  representations associated to  $x$ and $\bar{x}$ are different only by signs, so any valuation associated to one point induces an identical valuation on the other.  If they are ideal points, the order of vanishing valuations are exactly the same. Since $-I$ acts on the tree by fixing every vertex, the lemma follows. 
\end{proof}

Now we are ready to prove Theorem~\ref{thm:maintheorem}, which we  restate:
\maintheorem*

\begin{proof} 
 
By Lemma~\ref{lemma:SL=PSL} it is enough to consider the $\PSL_2(\C)$ character variety.
By Section~\ref{actionontreesection} there is a $\pi_1(M)$ equivariant map $\tilde{f}:\tilde{Q} \rightarrow T$ that is transverse to $E$ such that the surfaces $\tilde{F}=\tilde{f}^{-1}(E)$ and $F_M= p_M(\tilde{F})=f_M^{-1}(E)$ are two sided and bicollared and $F_M$ is essential.  

Recall that $p:M\rightarrow Q$ is the covering map which induces $p_*:\pi_1(M) \rightarrow \pi_1^{orb}(Q)$.  The map  $\hat{p}:\bar{X}(Q) \rightarrow \bar{X}(M)$ is given by the following:   if $\chi_{\bar{\rho}} \in \bar{X}(Q)$ then $\hat{p}(\chi_{\bar{\rho}}) = \chi_{\bar{\rho}'}$ where for $\gamma\in \pi_1(M)$ we have 
\[ \chi_{\bar{\rho}'}(\gamma) = \chi_{\bar{\rho}\circ p_*}(\gamma) = \chi_{\bar{\rho}}(p_*(\gamma)). \]
It follows that $\hat{p}$ is defined for all affine points of $\bar{X}(Q)$, and
if $\chi_{\bar{\rho}}$ is an affine point, then so is its image, $\chi_{\bar{\rho}'}$.

Long and Reid \cite[Theorem 3.4]{MR1739217} proved that given a covering $p:M\to Q$ as above where $Q$ has a flexible cusp, then 
the induced map  $\hat{p}$   is a birational equivalence.
  Let $X$ be a smooth projective completion of $\bar{X}(Q)$.  Then $X$ is also a smooth projective completion of $\bar{X}(M)$, and $\hat{p}$ induces an automorphism of $X$. 
If $\bar{x}$ is an ideal point of $\bar{X}(Q)$  then there is a $\gamma \in \pi_1^{orb}(Q)$ such that $f_{\gamma}(\bar{x})$ blows up.  Assume that $\hat{p}(\bar{x})=\bar{x}'$.    Then the function $f_{p_*(\gamma)}(\bar{x}')$ is the image of $f_{\gamma}(\bar{x})$ under the induced mapping.  The map $p_*$ is an injection, and since the cover is finite, $p_*(\pi_1(M))$ is a finite index subgroup of $\pi_1^{orb}(Q)$.  But since $f_{\gamma}(\bar{x})$ blows up, so does $f_{\gamma^n}(\bar{x})$ for all  $n$.  In particular, $\gamma$ cannot be torsion.  Moreover, there are affine points $\chi_{\bar{\rho}_i}$ converging to $\bar{x}$ where $f_{\gamma}(\chi_{\bar{\rho}_i})$ is unbounded.  The images, under the map induced by $p$, of infinitely many $\chi_{\bar{\rho}_i}$ are in any neighborhood of the image of $\bar{x}$.   Taking $n$ large enough so that $  \gamma^n\in p_*(\pi_1(M))$ we let $\delta\in \pi_1(M)$ such that $p_*(\delta)=\gamma^n$. 
The corresponding evaluation functions are given by  
\[  f_{\delta}(\chi_{\bar{\rho}_i\circ p_*}) = \tr(\bar{\rho}_i(p_*(\delta)))^2-4 \] 
and we see that these evaluation functions are unbounded. We conclude that if $\bar{x}$ is an ideal point of $\bar{X}(Q)$ then its image is an ideal point of $\bar{X}(M)$. 
We have shown that $\hat{p}$ is defined on all of $\bar{X}(Q)$, and affine points map to affine points and (in the natural extension of $\hat{p}$ to smooth projective models) ideal points map to ideal points.

First, we consider the case when $\phi$ is associated to an ideal point $\bar{x}$ and the corresponding valuation is the order of vanishing valuation. Below we will consider representations as extended to  $\bar{X}$, a smooth projective completion of $\bar{X}_0(M)$. Let $\Gamma=\pi_1(M)$ and let $\Gamma'=\pi_1^{orb}(Q)$.  We refer the reader to  \cite[Section 4]{MR1670053} for details about the following construction.    The ideal point $\bar{x}$ detects a surface from the action of $\pi_1(M)$ on the tree $T$.  Let $\bar{R}\subset \bar{t}^{-1}(\bar{X})$ so that $\bar{R}\subset \bar{R}(\Gamma)$.  
For $\bar{\rho} \in \bar{R}$ such that $\bar{t}(\rho)=\bar{x}$, let  $(\hat{\Gamma}, \phi, \hat{\rho})$ be the $\Z/2\Z$ central extension of $\Gamma$ lifting $\bar{\rho}$.  Then there is a $S\subset R(\hat{\Gamma})$ containing $\hat{\rho}$ and a dominating regular map $\phi_*:S\rightarrow \bar{R}$. The function field $\C(S)$ is a finitely generated extension of the function field $\C(\bar{X})$. An analogous construction follows for $\Gamma'$ where we will embellish the relevant spaces with a $'$ symbol. Here the relevant central extension will be associated to the representation $p_*(\bar{\rho}):\Gamma'\rightarrow \PSL_2(\C)$.

The action corresponding to  $\bar{x}$  is induced by the tautological representation $\bar{\mathcal{P}}:\pi_1(M)\rightarrow \PSL_2(\C(S))$.  
Let $\bar{x}'\in \bar{X}(Q)$ such that $\hat{p}(\bar{x}')=\bar{x}$.  Such an $\bar{x}'$ exists because the map extends to an automorphism $\bar{X}\rightarrow \bar{X}$ since $\bar{X}$ is a smooth projective completion of both $\bar{X}(Q)$ and $\bar{X}(M)$.   The function field of $\bar{x}'$ and the function field of $\bar{x}$ are therefore isomorphic and are contained in $\C(S)$.  

Both  $\bar{x}$ and $\bar{x}'$ with the order of vanishing valuation define actions on the tree $T$  for $\SL_2(\C(S))$.  The fact that $\bar{x}$ has a negative valuation means that in the unique extension of the valuation of $\C(S)$ it also values negatively.  As stated above, since $\bar{x}$ is an ideal point, so is $\bar{x}'$, and so it has a negative valuation that extends to $\C(S)$ as well.   By Theorem~\ref{orbifoldTreeAction} there is a surface $F_Q$ in $Q$ dual to the action associated to $\bar{x}'$.  
By Lemma~\ref{lemma:liftedsurfaceisdetected} the surface, $F_Q$    lifts to a symmetric surface $F_M$ in $M$ which is detected by the induced action of $\Gamma=\pi_1(M)$ on $T$.  It remains to show that this induced action is an action at $\bar{x}$.  That is, the induced action from Lemma~\ref{lemma:liftedsurfaceisdetected} is the restriction $\mathcal{P}'\hspace{-0.1cm}\mid_{\Gamma}$ of the tautological representation $\mathcal{P}':\Gamma'\rightarrow \PSL_2(\C(S))$. It is left to show that this is the tautological representation $\mathcal{P}:\Gamma\rightarrow \PSL_2(\C(S))$ associated to $\bar{x}$. If $\bar{x}$ is associated to the representation $\bar{\rho}:\Gamma\rightarrow \PSL_2(\C)$, then the tautological representation satisfies $\mathcal{P}(\gamma)(\hat{\rho})=\bar{\rho}(\gamma)$ for all $\gamma\in \Gamma$. 
The restriction of $\mathcal{P}'$ to $\Gamma$ satisfies $\mathcal{P}'(p^*(\gamma))(\hat{\rho}')=\bar{\rho}'(p^*(\gamma))$, so it is enough to see that 
\[ 
\bar{\rho}'(p^*(\gamma)) = \bar{\rho}(\gamma)
\]
which is the case since $\hat{p}(\bar{x}')=\bar{x}$ so that $p_*(\bar{\rho}')=\bar{\rho}$ and therefore $p_*(\bar{\rho}')(\gamma) = \bar{\rho}(p^*(\gamma))$ as needed.

Finally, we consider the case when $\phi$ is associated to an affine point $\chi_{\bar{\rho}} \in \bar{X}(M)$.  Then $\bar{\rho}:\pi_1(M) \rightarrow \PSL_2(F)$ for some  field $F\subset \C$ and there is a valuation $v$ on $F$ such that $ v(\pm \tr(\bar{\rho}(\gamma)))<0$. Assume that $F$ is the smallest field containing the image. 
By the above discussion there is an affine point $\chi_{\bar{\rho}'}\in \bar{X}(Q)$ such that $\hat{p}(\chi_{\bar{\rho}'})=\chi_{\bar{\rho}}$.  Therefore, $\bar{\rho}':\pi_1^{orb}(Q) \rightarrow \PSL_2(F')$ for some $F'\subset \C$.  Again, assume that $F'$ is the smallest such field. Recall that $\hat{p}(\chi_{\bar{\rho}'}) = \chi_{\bar{\rho}' \circ p_*}$.  If $\gamma \in \pi_1(M)$ then 
\[ \chi_{\bar{\rho}}(\gamma) =  \chi_{\bar{\rho}' \circ p_*}(\gamma) = \chi_{\bar{\rho}'}(p_*(\gamma))\in F'.\]   We conclude that $F\subset F'$.

Let $\gamma\in \pi_1(M)$ be such that $v(\pm \tr(\bar{\rho}(\gamma)))<0$. Since $\bar{\rho}'(p_*(\gamma)))=\bar{\rho}(\gamma)$ we have   $v(\pm \tr(\bar{\rho}'(p_*(\gamma))))<0$ as well. 
The valuation $v$ on $F$ extends to a valuation $v'$ on $F'$.  Let  $T=T_{v',F'}$.
By Theorem~\ref{orbifoldTreeAction} there is a surface $F_Q$ in $Q$ dual to the action associated to $\chi_{\bar{\rho}'}$.  
By Lemma~\ref{lemma:liftedsurfaceisdetected} the surface, $F_Q$  dual to the action from $\chi_{\bar{\rho}'}$ lifts to a symmetric surface $F_M$ in $M$ which is detected by the induced action of $\pi_1(M)$ on $T$. 
The proof that this induced action of $\pi_1(M)$ on $T$  is the same as the action from the valuation corresponding to $\chi_{\bar{\rho}}$ follows similar to the above.
\end{proof}

The condition that the quotient orbifold has a flexible cusp should be thought of as a generic condition.  The only known knot complements in $S^3$ which cover orbifolds with rigid cusps are the dodecahedral knots \cite{MR1184399} and the figure-eight knot.

The following corollary  follows immediately because ideal points on a canonical component of the character variety of a one cusped manifold only detect surfaces with a single boundary slope.

\begin{cor}
Let $M$ be a compact, orientable, one cusped, hyperbolic 3-manifold.  If $G$ is a finite group of orientation preserving symmetries of $M$ such that the orbifold $M/G$ has a flexible cusp, then for every slope detected at an ideal point of $\bar{X}_0(M)$ there exists an essential surface with that slope that is fixed set wise by every element of $G$. 
\end{cor}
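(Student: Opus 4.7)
The plan is to deduce the corollary from Theorem~\ref{thm:maintheorem} by exploiting the fact that, at an ideal point of the canonical component of a one-cusped manifold, all essential surfaces dual to the associated tree action share a single boundary slope (when any boundary slope is detected at all).

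To begin, let $r$ be a slope detected at an ideal point $x$ of $\bar{X}_0(M)$. As described in Section~\ref{section:PSLOrbs}, the order of vanishing at $x$ extends to a discrete valuation $v$ on an extension field $F$ of the function field of $\bar{X}_0(M)$, and the tautological representation provides $\phi:\pi_1(M)\to \PSL_2(F)$ with $v(\tr(\phi(\gamma)))<0$ for some $\gamma\in\pi_1(M)$. Since $x$ lies on a canonical component, Theorem~\ref{thm:maintheorem} (in its $\PSL_2(\C)$ form) directly produces a symmetric essential surface $F'$ in $M$ dual to this action; by construction $F'$ is fixed set-wise by every element of $G$.

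It remains to verify that $F'$ has boundary slope $r$. Because $M$ has a single cusp, Proposition~\ref{prop:generaltype12} leaves only two possibilities for the action associated to $x$. In case (2) every peripheral trace has non-negative valuation and only closed surfaces are detected, so no boundary slope appears---contradicting the hypothesis that $r$ is detected. Hence we must be in case (1), with a unique primitive element $\gamma_0\in\pi_1(\partial M)$ whose peripheral trace has non-negative valuation. Every essential surface dual to this action has boundary slope equal to that of $\gamma_0$, which is precisely $r$. Applying this to $F'$ completes the argument.

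The main obstacle is to ensure that the symmetric surface $F'$ is dual to the \emph{same} action as the one that originally detected $r$, so that the peripheral uniqueness in case (1) controls the slope of $F'$. This compatibility is built into the proof of Theorem~\ref{thm:maintheorem}: $F'$ arises as the lift to $M$ of a $2$-suborbifold in $M/G$ dual to the induced action of $\pi_1^{orb}(M/G)$ on the same tree $T$, and (via the Long--Reid birational equivalence between $\bar{X}_0(M)$ and $\bar{X}(M/G)$ used in the proof of Theorem~\ref{thm:maintheorem}) this pulled-back $\pi_1(M)$-action agrees with the one initially produced by the tautological representation at $x$.
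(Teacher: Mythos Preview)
Your proposal is correct and follows essentially the same approach as the paper. The paper's proof is a single sentence---it simply observes that the corollary follows immediately from Theorem~\ref{thm:maintheorem} together with the fact that ideal points on a canonical component of a one-cusped manifold detect surfaces with only a single boundary slope---whereas you have spelled out this latter fact explicitly via the case~(1)/(2) dichotomy of Proposition~\ref{prop:generaltype12} and taken care to note that the symmetric surface produced in the proof of Theorem~\ref{thm:maintheorem} is dual to the same tree action as the one originally detecting $r$.
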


We now prove Corollary \ref{maincor}.
\begin{proof}[Proof of Corollary \ref{maincor}]
By \cite{MR2303551}, the ideal points of $X_0(M)$ detects at least two distinct strict boundary slopes.   Then the ideal points of $\bar{X}_0(M)$ detect at least two distinct boundary slopes of $M$.  Because $M$ is one cusped, each ideal point of $\bar{X}_0(M)$ only essential surfaces of a single slope.  Then $\bar{X}_0(M)$ has at least two ideal points that strongly detect distinct slopes.  Then by Theorem \ref{thm:maintheorem} these two ideal points detect essential surfaces which have distinct boundary slopes and are preserved by every orientation preserving symmetry of $M$.
\end{proof}

\section{Two-Bridge Knots}\label{examplesection}

Two-bridge knots are those knots admitting a projection with only two maxima and two minima as critical points.  To  each rational number $p/q$ with $q$ odd we associate a two-bridge knot; when $q$ is even this procedure produces a two component link.   There are various ways to see this association, we will demonstrate it through continued fractions  (see \cite{MR1959408}).   Continued fractions for $p/q$ can be used to determine surfaces in the knot complement by \cite{MR778125}. There are different conventions for the continued fractions.  We follow the treatment in \cite{MR778125} with exception of signs.  (Instead of the treatment below, they have a $-$ after each $a_i$ instead of a $+$. In the notation below, this  changes the sign on the even $a_i$ terms.)

\begin{figure}[ht!]
\begin{center}
\includegraphics[scale=0.25]{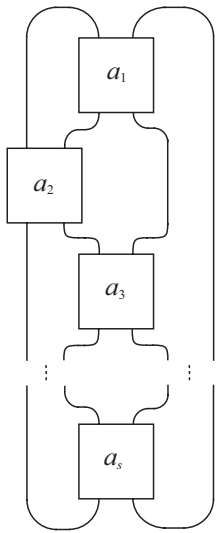} 
\caption{Two-bridge knot  }
\label{figure:twobridge}
\end{center}
\end{figure}
 
For each $p/q$ we consider  continued fractions that satisfy  
    \[ \frac{p}q  = r+ \frac{1}{a_1+ \cfrac{1}{a_2+\cfrac{1}{ \ddots+\cfrac{1}{a_s}}}}  \]
    for any integer $r$.  We write $p/q=r+[a_1,a_2,\dots, a_s]$.   
The knot $ K(p,q)$  is the boundary of the surface obtained by plumbing together $s$ bands in a row, the $i^{\mathrm{th}}$ band having $(-1)^{i-1}a_i$ half-twists, right-handed for a positive integer and left handed for a negative integer. (See Figure 2 of \cite[page 227]{MR778125}.)  Let $\Sigma[a_1,\dots, a_s]$ be the corresponding branched surface. Therefore, every two-bridge knot has a diagram as in Figure~\ref{figure:twobridge} where the numbers in each box indicate the number of half twists. The knot $K(p,q)$ is the same as the knot $K(p',q')$ exactly when $q=q'$ and $p'\equiv p^{\pm1} \pmod q.$

As mentioned, many continued fractions give the same $p/q$.
Hatcher and Thurston   showed that all $\pi_1$-injective surfaces in a two-bridge knot can be found by computing all the continued fraction expansions for $p/q$. 
Every continued fraction satisfying $|a_i|\geq 2$ for every $i$ corresponds to a branched surface in the knot complement from which some incompressible  surface can be seen using plumbing.  Surfaces obtained from different continued fractions are never isotopic. The boundary slopes of the surfaces correspond to the continued fraction in the following way (\cite{MR778125} Proposition 2).
For a given continued fraction $[a_1,a_2,\cdots,a_s]$ (with $|a_i| \geq 2$) the surfaces obtained from the corresponding branched surface all have slope 
\[ 2((n^+-n^-)-(n_0^+-n_0^-)),\]
 where $n^+$ and $n^-$ are the number of positive and negative terms respectfully in 
 \[ [a_1,-a_2,a_3,-a_4,\cdots,\pm a_s],\] and where $n_0^+$ and $n_0^-$ are the number of positive and negative terms respectfully in the unique continued fraction with all even terms after swapping the signs of the even numbered terms (i.e. swap the sign of the 2nd term, 4th term, etc.).   

Each two-bridge knot has a symmetry group of order 4 or order 8. The order 4 symmetry group is isomorphic to the Klein four group.  Every symmetry in this group acts trivially on the free homotopy classes of unoriented loops in $S^3-K(p,q)$ and therefore the induced action on the character variety is trivial. (See Section~\ref{section:symmandcanonical}.)
When the continued fraction $[a_1,a_2, \dots, a_s]$ representing the knot $K(p,q)$ is palindromic there is a symmetry $\sigma$ of the knot that turns the 4-plat upside down, and these knots have order 8 symmetry group. One can see this from the plumbing description of $K(p,q)$ above.   
The following lemma allows us to identify which surfaces in a two-bridge knot with this type of symmetry are preserved by $\sigma$. 


\begin{lemma}\label{lemma:twistsymmetries}
Let $K(p,q)$ be a two-bridge knot with palindromic twist region pattern, and let $\sigma$ be as above.
The action of $\sigma$ takes the branched surface $\Sigma[b_1, b_2 \dots, b_{s-1},  b_s]$  to $(-1)^{s+1} \Sigma[ b_s, b_{s-1}, \dots,b_2,  b_1]$.  

\end{lemma}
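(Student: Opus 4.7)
The plan is to track the action of the upside-down symmetry $\sigma$ directly on the plumbing construction of $\Sigma[b_1,\dots,b_s]$. By definition, this branched surface is built from $s$ plumbed bands where the $i$-th band carries $(-1)^{i-1}b_i$ signed half-twists, so the intrinsic twist sequence is $(b_1, -b_2, b_3, \dots, (-1)^{s-1}b_s)$. Because $\sigma$ is an orientation-preserving isometry of $S^3$ that setwise preserves $K(p,q)$ (using the palindromic hypothesis) and turns the 4-plat upside down, it must carry the plumbing of bands to another plumbing of the same combinatorial type, and the job is to read off the new parameters.

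The first step is geometric: identify how $\sigma$ acts band by band. Turning the plat upside down exchanges the position $i$ with position $s+1-i$. The signed half-twist count of a framed band is an invariant under ambient orientation-preserving homeomorphism of $S^3$, and $\sigma$ is such a homeomorphism, so no band has its twist count negated. Consequently, the band that now sits in position $j$ still carries $(-1)^{(s+1-j)-1}b_{s+1-j} = (-1)^{s-j}b_{s+1-j}$ signed half-twists (the count it had before, in position $s+1-j$).

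The second step is purely algebraic bookkeeping: write the resulting surface as $\Sigma[c_1,\dots,c_s]$ in the Hatcher-Thurston convention, which mandates $(-1)^{j-1}c_j$ signed half-twists in position $j$. Matching the two descriptions gives
\[
(-1)^{j-1}c_j \;=\; (-1)^{s-j}\, b_{s+1-j}, \qquad \text{hence} \qquad c_j \;=\; (-1)^{s+1}\, b_{s+1-j}.
\]
Unpacking this identity is exactly the statement $\sigma\bigl(\Sigma[b_1,\dots,b_s]\bigr) = (-1)^{s+1}\Sigma[b_s,\dots,b_1]$, where the leading sign is interpreted as simultaneously negating every entry. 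As a sanity check, the formula is trivial for $s=1$ and $s=3$ (no overall sign) and yields $\Sigma[-b_2,-b_1]$ for $s=2$, which one can verify directly by reversing and reindexing the twist sequence $(b_1,-b_2)$.

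The only non-combinatorial input is the geometric claim in the first step, and this is the main potential pitfall: one must be sure that $\sigma$ preserves, rather than flips, the handedness of each twist region. I would justify this by invoking the fact that the signed count of half-twists in a framed band is invariant under orientation-preserving ambient isotopy, together with the observation that $\sigma$ belongs to the (orientation-preserving) isometry group of $S^3\setminus K(p,q)$ and extends to an orientation-preserving homeomorphism of $S^3$. With that in hand, the remaining sign computation is the short parity argument displayed above.
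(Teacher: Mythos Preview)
Your proof is correct and follows essentially the same approach as the paper: both arguments observe that $\sigma$ reverses the order of the bands and then invoke the sign convention $(-1)^{i-1}b_i$ for the twist count in position $i$ to extract the global $(-1)^{s+1}$ factor. Your version is more explicit than the paper's, in particular in justifying why the signed half-twist count of each band is preserved (via orientation-preservation of $\sigma$), whereas the paper simply asserts the sign bookkeeping in one sentence.
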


\begin{proof}
The action of $\sigma$ turns the 4-plat upside down.  For a branched surface given by $[b_1,b_2,\dots, b_s]$ this flips the twist regions with the conventions above, $b_i$ corresponds to $(-1)^{k+1}b_i$ twists which corresponds to the sign differences above. 
\end{proof}

Now we prove Corollary~\ref{cor:twobridge}, that every hyperbolic two-bridge knot contains at least two symmetric essential surfaces.

\begin{proof}[Proof of Corollary~\ref{cor:twobridge}]
First, suppose that $K$ is the  figure-eight knot.   Then $K$ has exactly one essential surface, $F_4$, with boundary slope $4$, and $K$ has exactly one essential surface, $F_{-4}$, with boundary slope $-4$. Because an orientation preserving homeomorphism of $K$ takes essential surfaces in $K$ to essential surfaces with the same boundary slope, every symmetry of $K$ takes $F_4$ to itself and $F_{-4}$ to itself.

Now, assume that  $K$ is a hyperbolic two-bridge knot other than the figure-eight knot. By Reid \cite{MR1099096}, $K$ is not  arithmetic.   As a result, $K$  admits no hidden symmetries by Reid and Walsh \cite[Theorem 3.1]{MR2443107}.   By Margulis  \cite{MR1090825}, since $K$ is not arithmetic,  there is a unique minimal element in the commensurability class of $S^3-K$.  The minimal orientable element is called the commenturator quotient.  Neumann and Reid \cite[Prop. 9.1]{MR1184416} show that a non-arithmetic    knot  has hidden symmetries if and only if this  commensurator quotient has a rigid cusp.   Therefore, the commensurator quotient of $S^3-K$ has a flexible cusp.  It follows that any orbifold that covers $S^3-K$ has a flexible cusp since the torus and pillowcase cannot be covered by a turnover. 
We conclude that the orbifold obtained from the quotient of the group of orientation preserving symmetries on $K$ has a flexible cusp.  
The statement now follows from  Corollary \ref{maincor}.
\end{proof}

\section{Double Twist Knots}

\begin{figure}[h]
\begin{center}
\includegraphics[scale=0.2]{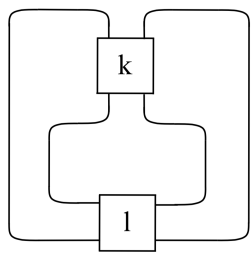} 
\caption{A $J(k,l)$ link  }
\label{doubleTwistKnot}
\end{center}
\end{figure}

A double twist link, often denoted $J(k,l)$, is a link with $k$ and $l$ half twist regions as seen in Figure \ref{doubleTwistKnot}. For $J(k,l)$ to be a knot $kl$ must be even; otherwise it is a two component link.  It was shown in \cite{MR2827003} that $\text{(P)SL}_2(\C)$ character variety of $J(k,l)$ consists of one component (of characters of irreducible representations) if $k\neq l$ and if $k=l$ then it has two components.  We will now concentrate on the case when $k=l$ as these are the only double twist knots which have a component of the character variety that does not consist of characters of abelian representations and is non-canonical.
Let $K_n=J(2n,2n)$ be a symmetric double twist knot.

We will explicitly calculate the slopes that correspond to essential surfaces in $S^3-K_n$ and then  prove that the symmetric slopes {\em are} detected on the canonical component, whereas the non-symmetric slopes {\em are not} detected on the canonical component.

The double twist knot $J(l,m)$ is equivalent to the two-bridge knot $K(p,q)$ with $\frac{p}{q}=\frac{l}{1-lm}$ in $\mathbb{Q}/\mathbb{Z}$ (see \cite{MR2827003}).
  Therefore, $K_n = K(2n, 1-4n^2)$  in two-bridge notation.  We will  calculate the slopes of the essential surfaces in the $K_n$ knots by using the continued fraction expansions of their two-bridge knot notation.  

\subsection{Continued Fraction Lemmas}
If a sequence of numbers $a_i,a_{i+1},\cdots a_{i+j}$ repeats $k$ times  in a continued fraction we will write this sequence as $(a_i,a_{i+1},\cdots, a_{i+j})_k$.  For example \[ [2,-2,2,-2]=[(2,-2)_2]\]  and 
\[ [5,3,2,3,2,7,2,3,2,3,2,3,2,3,2]=[5,(3,2)_2,7,(2,3)_4,2]=[5,(3,2)_2,7,2,(3,2)_4].\]

\begin{lemma}\label{lemmaNegativeContinuedFraction}
The following relationship holds  between negations of continued fractions
\[ [a_1,a_2,\cdots,a_s] = -[-a_1,-a_2,\cdots,-a_s].\]
\end{lemma}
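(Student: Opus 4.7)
The plan is to prove the identity by a short induction on the length $s$ of the continued fraction, using the recursive definition
\[ [a_1, a_2, \dots, a_s] \;=\; \frac{1}{a_1 + [a_2, \dots, a_s]}, \]
with the convention that $[a_s] = 1/a_s$.

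For the base case $s = 1$, I would just compute $[a_1] = 1/a_1$ and $-[-a_1] = -1/(-a_1) = 1/a_1$, which agree.

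For the inductive step, assuming the identity for length $s-1$, I would write
\[ -[-a_1, -a_2, \dots, -a_s] \;=\; -\frac{1}{-a_1 + [-a_2, \dots, -a_s]} \;=\; \frac{1}{a_1 - [-a_2, \dots, -a_s]}, \]
then apply the inductive hypothesis (in the form $-[-a_2, \dots, -a_s] = [a_2, \dots, a_s]$, i.e.\ $[-a_2, \dots, -a_s] = -[a_2, \dots, a_s]$) to rewrite the denominator as $a_1 + [a_2, \dots, a_s]$, which by the recursive definition equals $1/[a_1, a_2, \dots, a_s]^{-1}$, giving $[a_1, a_2, \dots, a_s]$.

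There is essentially no obstacle here; the only thing to be careful about is the sign manipulation in the inductive step and keeping the two forms of the inductive hypothesis straight. The identity is really the statement that the rational function $f(x_1, \dots, x_s) = [x_1, \dots, x_s]$ is odd under simultaneous negation of all variables, which is transparent from its recursive structure.
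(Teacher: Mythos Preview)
Your proposal is correct and takes essentially the same approach as the paper: both push the negation through the recursive structure of the continued fraction, with the paper doing it by writing out the nested fraction and saying ``continuing in this fashion,'' while you formalize the same idea as an explicit induction on $s$.
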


\begin{proof}
Let $[a_1,a_2,\cdots,a_s]=\frac{p}{q}$.  Then 
\[
\frac{-p}{q} = \frac{-1}{a_1+ \frac{1}{a_2+ \cdots +\frac{1}{a_s} } } =\frac{1}{-a_1+ \frac{-1}{a_2+ \cdots +\frac{1}{a_s} } }= \frac{1}{-a_1+ \frac{1}{-a_2+ \frac{-1}{a_3 + \cdots +\frac{1}{a_s}} } }.
\] 
Continuing in this fashion, 
\[ \frac{-p}{q} =  \frac{1}{-a_1+ \frac{1}{-a_2+ \cdots +\frac{1}{-a_s} } }=[-a_1,-a_2,\cdots,-a_s].\]
\end{proof}

\begin{lemma}{\label{repeatfractionlemma1}}
For any positive integer $s$, 
\[ [(-2,2)_s] = \frac{-2s}{2s+1} \ \text{ and } \ [(2,-2)_s] = \frac{2s}{2s+1}.\]
\end{lemma}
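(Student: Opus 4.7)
The plan is to prove the first identity by induction on $s$, and then deduce the second identity from the first by applying Lemma~\ref{lemmaNegativeContinuedFraction}.

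For the base case $s=1$, I would just compute directly: $[-2,2] = \tfrac{1}{-2+\tfrac{1}{2}} = \tfrac{1}{-3/2} = -\tfrac{2}{3}$, which matches $-2s/(2s+1)$ at $s=1$.

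For the inductive step, I would use the standard recursion $[a_1, a_2, \ldots, a_n] = \tfrac{1}{a_1 + [a_2, \ldots, a_n]}$ coming directly from the definition of continued fraction in the paper. Assuming $[(-2,2)_s] = -2s/(2s+1)$, I would first compute
\[
[2,(-2,2)_s] \;=\; \frac{1}{2+[(-2,2)_s]} \;=\; \frac{1}{2 - \tfrac{2s}{2s+1}} \;=\; \frac{2s+1}{2s+2},
\]
and then
\[
[(-2,2)_{s+1}] \;=\; [-2,2,(-2,2)_s] \;=\; \frac{1}{-2 + [2,(-2,2)_s]} \;=\; \frac{1}{-2 + \tfrac{2s+1}{2s+2}} \;=\; \frac{-(2s+2)}{2s+3},
\]
which is exactly $-2(s+1)/(2(s+1)+1)$, completing the induction.

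For the second identity, I would apply Lemma~\ref{lemmaNegativeContinuedFraction} with $a_i = \pm 2$ in alternating pattern to get $[(2,-2)_s] = -[(-2,2)_s] = \tfrac{2s}{2s+1}$. There is no real obstacle here; the only thing to be careful about is making sure the recursive unpacking of the continued fraction is applied consistently with the paper's convention (where $[a_1,\ldots,a_s]$ denotes the tail $\tfrac{1}{a_1 + \cfrac{1}{a_2 + \cdots}}$ rather than the full expression $r + \tfrac{1}{\cdots}$).
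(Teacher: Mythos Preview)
Your proof is correct and follows essentially the same approach as the paper: induction on $s$ for the first identity (with the same base case and the same two-step unwinding of the recursion in the inductive step), followed by an appeal to Lemma~\ref{lemmaNegativeContinuedFraction} for the second identity. The only cosmetic difference is that you split the inductive computation into two displayed steps, whereas the paper compresses it into one line.
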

\begin{proof}
This follows by induction on $s$.  Notice that $[-2,2]=\frac{-2}{3}$. 
Assuming that $[(-2,2)_s]= \frac{-2s}{2s+1}$, we see that 
\[ {[(-2,2)_{s+1}]=\frac{1}{-2+ \frac{1}{2+ \frac{-2s}{2s+1}}} }=\frac{1}{-2+\frac{2s+1}{2s+2}}= \frac{2s+2}{-2s-3}\]
as needed. The second assertion follows from  Lemma \ref{lemmaNegativeContinuedFraction}.
\end{proof}

In what follows, we extend the definition of our continued fraction notation to include real number entries. We treat any real number as if it were an integer with respect to its placement in the continued fraction.

\begin{lemma}{\label{repeatfractionlemma2}}
For any non-negative integer $k$ and real number $x$, 
\[ [(2,-2)_k, 2,x] = \frac{(2k+1)x+2k}{(2k+2)x+2k+1}.\]  
\end{lemma}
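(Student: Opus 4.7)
The plan is to prove this by induction on $k$, exactly parallel to the proof of Lemma~\ref{repeatfractionlemma1}.

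For the base case $k=0$, the claimed formula reduces to $[2,x] = \tfrac{x}{2x+1}$, which is immediate from $[2,x] = 1/(2 + 1/x)$.

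For the inductive step, observe that the prefix $(2,-2)$ can be peeled off:
\[
[(2,-2)_{k+1},2,x] \;=\; \frac{1}{2 + \dfrac{1}{-2 + [(2,-2)_{k},2,x]}}.
\]
Substitute the inductive hypothesis $[(2,-2)_{k},2,x] = \dfrac{(2k+1)x + 2k}{(2k+2)x + 2k+1}$ into the inner expression. The quantity $-2 + [(2,-2)_k,2,x]$ simplifies to $\dfrac{-(2k+3)x - (2k+2)}{(2k+2)x+2k+1}$; inverting gives the contribution to the next level, and then adding $2$ and inverting once more yields (after collecting terms) $\dfrac{(2k+3)x + (2k+2)}{(2k+4)x + (2k+3)}$, which is exactly the claimed formula with $k$ replaced by $k+1$.

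There is no real obstacle here; the only non-trivial step is the arithmetic in the inductive simplification, and the coefficients collapse cleanly because the recursion on the numerators/denominators of the convergents of a continued fraction with partial quotients $\pm 2$ is linear with a two-step pattern that produces the $(2k+1,2k,2k+2,2k+1)$ matrix of coefficients. (Equivalently, one can package the induction by writing the continued fraction as a product of $\mathrm{SL}_2(\Z)$ matrices $\bigl(\begin{smallmatrix}a_i & 1\\ 1 & 0\end{smallmatrix}\bigr)$ and noting that $\bigl(\begin{smallmatrix}2&1\\1&0\end{smallmatrix}\bigr)\bigl(\begin{smallmatrix}-2&1\\1&0\end{smallmatrix}\bigr) = \bigl(\begin{smallmatrix}-3&2\\-2&1\end{smallmatrix}\bigr)$, whose $k$-th power has entries linear in $k$ with the displayed coefficients; this gives a one-shot derivation, but the inductive version is shorter to write out.)
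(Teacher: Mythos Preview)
Your proof is correct and follows essentially the same approach as the paper: both argue by induction on $k$, verify the base case $[2,x]=x/(2x+1)$ directly, and carry out the identical inductive computation by peeling off a leading $(2,-2)$ block. Your parenthetical remark about the $\SL_2(\Z)$ matrix interpretation is a nice alternative viewpoint not in the paper, but the core argument matches.
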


\begin{proof}
Direct computation shows that $[2,x] = \frac{1}{2+\frac{1}{x}}= \frac{x}{2x+1}$.  
We proceed by induction on $k$.   Assume that $[(2,-2)_k, 2,x] = \frac{(2k+1)x+2k}{(2k+2)x+2k+1}$.  An elementary computation shows that  
\[ [(2,-2)_{k+1}, 2,x]=\frac{1}{2+\frac{1}{-2+\frac{(2k+1)x+2k}{(2k+2)x+2k+1}}}= \frac{(2k+3)x+2k+2}{(2k+4)x+2k+3}\]
as needed. 
\end{proof}

\begin{lemma}\label{repeatfractioncompositionlemma}
Given any continued fraction of length $s$ and any integer $i$ such that $1<i<s$, 
\[ [a_1,a_2,\cdots,a_i,a_{i+1},\cdots,a_s] =[a_1,a_2,\cdots,a_{i-1},\frac{1}{[a_i,a_{i+1},\cdots,a_s]} ]. \]
\end{lemma}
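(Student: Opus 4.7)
The plan is to prove this by directly unwrapping the recursive definition of a continued fraction. First I would set $y := [a_i, a_{i+1}, \ldots, a_s]$, which by definition is the value of the tower
\[ y = \cfrac{1}{a_i + \cfrac{1}{a_{i+1} + \cfrac{1}{\ddots + \cfrac{1}{a_s}}}}. \]
Taking reciprocals gives $1/y = a_i + \tfrac{1}{a_{i+1} + \cdots}$, so $1/y$ is precisely the value of the sub-tower that appears inside $[a_1, \ldots, a_s]$ starting at position $i$.

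Next I would evaluate the right-hand side $[a_1, \ldots, a_{i-1}, 1/y]$, using the extension of the notation to real-number entries introduced immediately before the lemma. The innermost level of this tower contributes $a_{i-1} + \tfrac{1}{1/y} = a_{i-1} + y$, and substituting $y = [a_i, \ldots, a_s]$ reassembles exactly the original tower at positions $1, \ldots, i-1, i, \ldots, s$. To make this rigorous I would run a short induction on $i$: the base case $i = 2$ is immediate from the identity $[a_1, b] = 1/(a_1 + 1/b)$ applied to $b = 1/[a_2, \ldots, a_s]$, and the inductive step follows from the recursion $[a_1, a_2, \ldots, a_{i-1}, c] = 1/(a_1 + [a_2, \ldots, a_{i-1}, c])$ applied with the real entry $c = 1/[a_{i+1}, \ldots, a_s]$.

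I do not anticipate any substantive obstacle: the lemma is a purely formal bookkeeping statement saying that one may truncate a continued fraction at any position by replacing the tail with the reciprocal of its value. The only point of care is to respect the convention that real-number entries are inserted into the tower exactly as if they were integers, so that $1/(1/y) = y$ collapses cleanly at the seam.
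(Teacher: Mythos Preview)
Your proposal is correct and follows essentially the same approach as the paper: both arguments simply unwind the recursive definition of the continued fraction, observing that the tail $[a_i,\ldots,a_s]$ is exactly the reciprocal of the sub-tower appearing at position $i$, so replacing that tail by the single real entry $1/[a_i,\ldots,a_s]$ reproduces the original tower. The paper does this by direct substitution without an explicit induction, whereas you add an inductive layer for rigor, but the underlying content is identical.
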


\begin{proof}
Let $[a_1,a_2,\cdots,a_s]$ be a continued fraction.  Then 
\[ [a_1,a_2,\cdots,a_s]=\frac{1}{a_1+\frac{1}{a_2+\cdots+\frac{1}{a_{i-1}+\frac{1}{a_i+\cdots +\frac{1}{a_s}}} }}.\]
Note that \[  \frac{1}{a_i+\cdots +\frac{1}{a_s}}= [a_i,\cdots,a_s].\]  Therefore, \[[a_1,a_2,\cdots,a_s]=\frac{1}{a_1+\frac{1}{a_2+\cdots+\frac{1}{a_{i-1}+[a_i,\cdots,a_s]} }}= [a_1,a_2,\cdots,a_{i-1},\frac{1}{[a_i,a_{i+1},\cdots,a_s]} ].\]
\end{proof}

\subsection{Boundary Slopes of Symmetric Double Twist Knots  }

The main result of this section, which we will use in Section~\ref{examplesections} to prove Theorem \ref{mainresultTheoremdetectedslopesstrict}, is the following.

\begin{prop}\label{slopeOfEssentialSurfaces}
For $|n|>1$ the slopes of essential surfaces in the knot $K_n$ are exactly $0$, $-4n$, and $-8n+2$.  Surfaces of slope $-4n$ are not preserved by the symmetry that turns the 4-plat upside down. 
\end{prop}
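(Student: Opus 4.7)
The plan is to combine Hatcher--Thurston's classification of essential surfaces with the continued fraction identities of Lemmas~\ref{lemmaNegativeContinuedFraction}--\ref{repeatfractioncompositionlemma} to enumerate all expansions $[a_1,\dots,a_s]$ of the 2-bridge invariant $p/q\equiv 2n/(1-4n^2)\pmod{\Z}$ satisfying $|a_i|\ge 2$, apply the boundary-slope formula to each, and then invoke Lemma~\ref{lemma:twistsymmetries} for the symmetry statement. Because $(2n)^2\equiv 1\pmod{4n^2-1}$, the knot $K_n$ has palindromic symmetry, so I must be willing to work with either of the two representatives $-2n/(4n^2-1)$ and its integer translate $(4n^2-2n-1)/(4n^2-1)$ when searching for expansions.

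First I would exhibit three families of continued fraction expansions with $|a_i|\ge 2$. The all-even expansion $[-2n,2n]$ of length $2$ is immediate from $(1-4n^2)/(2n)=-2n+1/(2n)$. Using Lemma~\ref{repeatfractionlemma2} (together with the derived identity $[(-2,2)_{n-1},-2]=-(2n-1)/(2n)$) one verifies that
\[
[-(2n-1),(-2,2)_{n-1},-2]=\frac{-2n}{4n^2-1},
\]
a length-$2n$ expansion. Then, via Lemma~\ref{repeatfractioncompositionlemma} and the formula $[(2,-2)_k,x]=(2k-1-2kx)/(2k-(2k+1)x)$ derived from Lemma~\ref{repeatfractionlemma2}, one computes the palindromic expansion
\[
[(2,-2)_{n-1},3,(-2,2)_{n-1}]=\frac{4n^2-2n-1}{4n^2-1},
\]
of length $4n-3$, which represents the same knot since it differs from $-2n/(4n^2-1)$ by $1$ in $\mathbb{Q}$.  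I would then argue that these three families (together with their $\sigma$-images and trivial mirror equivalents) exhaust the admissible expansions by running the nearest-integer continued-fraction recursion: for each $|a_i|\ge 2$ expansion, the valid values of the next entry are tightly constrained by the remaining fraction, and an induction on the depth of the recursion (tracked, e.g., along paths in the Farey triangulation) shows every branch terminates in one of these three patterns.

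For the slope computations I would use $[-2n,2n]$ as the all-even reference, whose sign-swap is $[-2n,-2n]$, so $n_0^+-n_0^-=-2$. For $[-2n,2n]$ itself the sign-swap matches the reference and the slope is $0$; for $[-(2n-1),(-2,2)_{n-1},-2]$ the sign-swap produces $-(2n-1)$ followed by $2n-1$ copies of $+2$, giving $n^+-n^-=2n-2$ and slope $2((2n-2)-(-2))=4n$; for the palindromic expansion every sign-swapped entry is positive, yielding $n^+-n^-=4n-3$ and slope $2((4n-3)-(-2))=8n-2$. (The overall sign matches the paper's $-4n$ and $-8n+2$ after accounting for the meridian-longitude convention, i.e.\ whether one works with $-2n/(4n^2-1)$ or $+2n/(4n^2-1)$.)  Finally, applying Lemma~\ref{lemma:twistsymmetries}: for $[-2n,2n]$, with $s=2$, one has $\sigma$ sending it to $(-1)^{3}[2n,-2n]=[-2n,2n]$, so the Seifert surface is preserved; the palindromic expansion is, by construction, carried to itself; but $\sigma([-(2n-1),(-2,2)_{n-1},-2])=[2,(-2,2)_{n-1},2n-1]$, which evaluates to the \emph{other} representative $(4n^2-2n-1)/(4n^2-1)$ and so is a distinct branched surface of the same slope. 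Hence the $-4n$ surfaces form a $\sigma$-orbit of size two and are not individually preserved, while the $0$ and $-8n+2$ surfaces are $\sigma$-fixed.

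The hard step will be the \emph{exhaustiveness} part of the enumeration: showing that every $|a_i|\ge 2$ continued fraction expansion of either representative in the $\mathbb{Q}/\Z$ class of $p/q$ belongs, up to the symmetries above, to one of the three listed families. This requires a careful case analysis of the admissible remainders at each step of the continued-fraction recursion, controlled by the explicit formulas for $[(2,-2)_k,\cdot]$ and $[(2,-2)_k,2,\cdot]$ from Lemmas~\ref{repeatfractionlemma1} and \ref{repeatfractionlemma2}; the worry is that some unanticipated branch might produce a fourth slope, and ruling this out is the main technical content.
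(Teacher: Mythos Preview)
Your overall strategy---enumerate the admissible continued fractions for $2n/(4n^2-1)$, compute their slopes via the Hatcher--Thurston formula, and then apply Lemma~\ref{lemma:twistsymmetries} to decide which branched surfaces are $\sigma$-fixed---is exactly the paper's approach, and your verification of the individual expansions and their slopes is correct (the sign discrepancy between your $4n,\,8n-2$ and the paper's $-4n,\,-8n+2$ is purely a convention issue coming from working with $-2n/(4n^2-1)$ rather than $2n/(4n^2-1)$).

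The one substantive difference is the exhaustiveness step, which you rightly flag as the hard part.  The paper does \emph{not} carry out the recursive case analysis you outline.  Instead it observes that the unique all-positive continued fraction not ending in $1$ is $[1,2n-2,1,2n-1]$, which has the special shape $[1,a,1,a-1]$, and then invokes \cite[Lemma~7]{MR2441951} to conclude that there are exactly four admissible expansions (two of which share a slope).  This replaces your proposed Farey/nearest-integer induction by a single citation and removes the risk of a missed branch.  Your direct approach could be made to work, but it is considerably longer and the paper's shortcut is the cleaner route; you should use it.

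For the symmetry statement you also need to say explicitly (as the paper does, citing Hatcher--Thurston's Theorem~1(d)) that distinct admissible continued fractions yield non-isotopic surfaces; otherwise knowing that $\sigma$ swaps the two slope-$(-4n)$ expansions does not by itself preclude the corresponding surfaces from being isotopic.
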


 Proposition~\ref{slopeOfEssentialSurfaces}  will follow directly from Proposition~\ref{proposition:continuedfractions} which establishes that $0$, $-4n$, and $-8n+2$ are the only boundary slopes of $K_n$, and Lemma~\ref{lemma:-4nnotsymmetric} which shows that $-4n$ is not a symmetric slope. 

\begin{prop}\label{proposition:continuedfractions}
The continued fractions for the knot $K_n$ when     $n>1$ are 
\[ [2n,-2n],[2n-1,2,(-2,2)_{n-1}],[(-2,2)_{n-1},-2,-2n+1],[(-2,2)_{n-1},-3,(2,-2)_{n-1}]\]  and these correspond to branched surfaces that give slopes of $0$,$-4n$, $-4n$, and $-8n+2$ respectfully. \label{propcontinuedfrac}
\end{prop}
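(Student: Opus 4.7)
The plan is to verify each of the four listed continued fractions evaluates, modulo $\mathbb{Z}$, to $\frac{-2n}{1-4n^2}$, then apply the Hatcher--Thurston slope formula, then argue these four expansions exhaust the continued fractions with $|a_i|\ge 2$ representing $K_n$. Entry (i) is a direct two-term computation: $[2n,-2n]=\frac{-2n}{1-4n^2}$. For (ii), I would use Lemma~\ref{repeatfractioncompositionlemma} to isolate the tail $[(-2,2)_{n-1}]=\frac{-(2n-2)}{2n-1}$ from Lemma~\ref{repeatfractionlemma1} and fold back to obtain $\frac{2n}{(2n-1)(2n+1)}$. For (iii), apply Lemma~\ref{lemmaNegativeContinuedFraction} to reduce to $-[(2,-2)_{n-1},2,2n-1]$, then Lemma~\ref{repeatfractionlemma2} with $k=n-1$ and $x=2n-1$, yielding $-1+\frac{2n}{4n^2-1}$. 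For (iv), set $y_k=[(-2,2)_k,-3,(2,-2)_{n-1}]$ and note the recursion $y_{k+1}=[-2,2,y_k]$ is a M\"obius transformation whose $2\times 2$ transfer matrix is a single Jordan block with eigenvalue $1$; in the Jordan basis one reads off the closed form $y_{n-1}=\frac{4n^2-2n-1}{1-4n^2}$, which differs from $\frac{-2n}{1-4n^2}$ by the integer $-1$.

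\textbf{Computing slopes.} The unique all-even expansion of $\frac{-2n}{1-4n^2}$ modulo $\mathbb{Z}$ is $[2n,-2n]$, so in the Hatcher--Thurston slope formula the normalizing count is $n_0^+-n_0^-=2$. After the even-index sign swap I tally signs on each list: (i) $[2n,2n]$ gives $n^+-n^-=2$, slope $0$; in (ii) the head $2n-1$ is positive while every remaining sign-swapped entry equals $-2$, so $n^+-n^-=2-2n$ and the slope is $-4n$; (iii) gives the same counts by the same mechanism, slope $-4n$; in (iv), both the $(-2,2)_{n-1}$ and $(2,-2)_{n-1}$ blocks collapse to $-2$'s under the sign swap, together with the central $-3$, so $n^+-n^-=-(4n-3)$ and the slope is $2((3-4n)-2)=-8n+2$.

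\textbf{Exhaustiveness (the main obstacle).} What remains is to show no fifth continued fraction with $|a_i|\ge 2$ represents $K_n$. I would enumerate branches by running the continued-fraction algorithm on each translate $\frac{-2n}{1-4n^2}+r$ (this accounts for the free integer $r$ in the paper's convention) and keep only those with $|a_i|\ge 2$. Since $\gcd(2n,4n^2-1)=1$ and $4n^2-1=(2n-1)(2n+1)$, the admissible first entries are tightly constrained; the partial symmetry between (ii) and (iii) and the full palindromic symmetry of (iv) should allow the casework to close off after exactly these four branches. The bookkeeping of the recursion is where I expect the main difficulty, and one could alternatively invoke a general Hatcher--Thurston enumeration of continued fraction expansions of a given $p/q$ to shorten this step.
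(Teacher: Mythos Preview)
Your verification of the four continued fractions and the slope computations are correct and essentially parallel the paper's. Two small quibbles: in (iv), under the paper's convention $[a_1,\dots,a_m,x]$ the recursion is $y_{k+1}=\tfrac{1}{-2+\tfrac{1}{2+y_k}}$, which is $[-2,2+y_k]$ rather than $[-2,2,y_k]$; and the transfer matrix $\left(\begin{smallmatrix}1&2\\-2&-3\end{smallmatrix}\right)$ has trace $-2$, so the repeated eigenvalue is $-1$ (of course this is $1$ projectively, so your Jordan-block argument goes through unchanged). Your M\"obius-recursion treatment of (iv) is a genuinely different route from the paper's, which instead peels off $[2,-3,(2,-2)_{n-1}]$ via Lemma~\ref{repeatfractioncompositionlemma}, evaluates it with Lemma~\ref{repeatfractionlemma1}, and then applies Lemmas~\ref{lemmaNegativeContinuedFraction} and~\ref{repeatfractionlemma2} to the remaining block; both are short.

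The real gap is the exhaustiveness step, which you correctly flag as the main obstacle but do not actually carry out. The paper does not do casework here: it observes that the unique all-positive continued fraction (not ending in $1$) of $\tfrac{2n}{4n^2-1}$ is $[1,2n-2,1,2n-1]$, which has the shape $[1,a,1,a-1]$, and then invokes \cite[Lemma~7]{MR2441951} to conclude that there are exactly four expansions with all $|a_i|\ge 2$ (and that two of them give the same boundary slope). Your proposed enumeration over translates $\tfrac{-2n}{1-4n^2}+r$ would eventually succeed, but the citation replaces that bookkeeping entirely.
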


\begin{proof}

The rational number associated to the two-bridge knot $K_n$ is $\frac{2n}{4n^2-1}$. We will first show that these four continued fractions are expansions of this rational number (modulo $\Z$) and then show that there are no other continued fraction expansions which are associated to branched surfaces. 

The fact that $[2n,-2n]= \frac{2n}{4n^2-1}$ is an elementary computation.   This  is the unique continued fraction with all  even terms. We conclude that $n_0^+=2$ and $n_0^-=0$ since these are the number of positive and negative (respectfully) terms in $[2n,-(-2n)]$.  For this expansion $n^+=n_0^+$ and $n^-=n_0^-$ so the associated slope is  $2((n^+-n^-)-(n_0^+-n_0^-))=0$.

The second continued fraction is $[2n-1,2,(-2,2)_{n-1}]$.
From  Lemma \ref{lemmaNegativeContinuedFraction} we have 
\[ [2n-1,2,(-2,2)_{n-1}]=-[-2n+1,-2,(2,-2)_{n-1}].\]  By  Lemma \ref{repeatfractionlemma1} this is 
\[ \frac{-1}{(-2n+1)+\frac{1}{-2+[(2,-2)_{n-1}]}}= \frac{-1}{(-2n+1)+\frac{1}{-2+\frac{2n-2}{2n-1}}}=\frac{2n}{4n^2-1}\]
as needed. We immediately see that  $n^+=1$ and $n^-=1+2(n-1)=2n-1$.  This continued fraction corresponds to surfaces with slope $2((n^+-n^-)-(n_0^+-n_0^-))=-4n.$

The third continued fraction is $[(-2,2)_{n-1},-2,-2n+1]$. 
By Lemma~\ref{lemmaNegativeContinuedFraction} \[ [(-2,2)_{n-1},-2,-2n+1]= -[(2,-2)_{n-1},2,2n-1].\]  Therefore we will consider $[(2,-2)_{n-1},2,2n-1]$. By Lemma~\ref{repeatfractionlemma2}  
 \[ [(2,-2)_{n-1}, 2,2n-1] = \frac{(2n-1)^2+2n-2}{(2n)(2n-1)+2n-1}=1-\frac{2n}{4n^2-1}.\]  
 Therefore, the original continued fraction is $-1+(2n /(4n^2-1))$, which gives the correct fraction. 
 Here  $n^+=1$ and $n^-=2n-1$ so it corresponds to surfaces with slope $-4n$ as well.

The fourth continued fraction is $[(-2,2)_{n-1},-3,(2,-2)_{n-1}]$. Call this $\tfrac{p}q$.
From Lemma \ref{repeatfractioncompositionlemma}, $\tfrac{p}q=[(-2,2)_{n-2},-2,\frac{1}{[2,-3,(2,-2)_{n-1}]}].$
By Lemma~\ref{repeatfractionlemma1},  $[(2,-2)_{n-1}]=(2n-2)/(2n-1)$ and so 
\[
[2,-3,(2,-2)_{n-1}]=\frac{1}{2+\frac{1}{-3+[(2,-2)_{n-1}]}} = \frac{4n-1}{6n-1}.
\]
Therefore, $\tfrac{p}q=[(-2,2)_{n-2},-2,\frac{6n-1}{4n-1}]$. By Lemma \ref{lemmaNegativeContinuedFraction}, we can negate the entries and have that  $\tfrac{p}{q}= -[(2,-2)_{n-2},2,-\frac{6n-1}{4n-1}].$ Now we can apply Lemma~\ref{repeatfractionlemma2}, obtaining
\[ \tfrac{p}{q} = -\frac{(2n-3)\frac{-6n+1}{4n-1}+2n-2}{(2n-2)\frac{-6n+1}{4n-1}+2n-1} =-1+\frac{2n}{4n^2-1}. \]
For this continued fraction $n^+=0$ and $n^-=4(n-1)+1=4n-3$.  Thus it corresponds to surfaces with slope $-8n+2$.

It now suffices to show that these are the only continued fractions associated to branched surfaces for these knots. Every rational number has a unique continued fraction that contains all positive terms and does not end in a $1$.   For $n>1$ this continued fraction is $\frac{2n}{4n^2-1}=[1,2n-2,1,2n-1]$.  However, this does not correspond to a branched surface in the knot complement because it has terms with 1's. Since this continued fraction is of the form $[1,a,1,a-1]$, it follows from  \cite[Lemma 7]{MR2441951}  that there are   a total of four continued fractions that correspond to  surfaces in the knot complement,  and two of those correspond to  surfaces with the same boundary slopes (note that \cite{MR2441951} uses slightly different notation for continued fractions).
%
\end{proof}

\begin{lemma}\label{lemma:-4nnotsymmetric}
The symmetry $\sigma$  does not fix any surface of slope $-4n$. 
\end{lemma}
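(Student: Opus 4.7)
The plan is to use the explicit list of continued fractions producing slope $-4n$ from Proposition~\ref{proposition:continuedfractions} together with the formula for how $\sigma$ acts on branched surfaces in Lemma~\ref{lemma:twistsymmetries}. By Proposition~\ref{proposition:continuedfractions}, the branched surfaces in $S^3 - K_n$ of slope $-4n$ come from exactly the two continued fractions
\[ [2n-1,\, 2,\, (-2,2)_{n-1}] \quad \text{and} \quad [(-2,2)_{n-1},\, -2,\, -2n+1], \]
each of length $s = 2n$. Call the corresponding branched surfaces $\Sigma_1$ and $\Sigma_2$.

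The strategy is to show that $\sigma$ swaps $\Sigma_1$ and $\Sigma_2$, and then invoke the Hatcher--Thurston classification to conclude that no essential surface of slope $-4n$ is fixed. For the first step, I would apply Lemma~\ref{lemma:twistsymmetries} with $s=2n$, so that the sign factor $(-1)^{s+1} = -1$, and then reverse the sequence defining $\Sigma_1$: the reversal of $[2n-1, 2, (-2,2)_{n-1}]$ is $[(2,-2)_{n-1}, 2, 2n-1]$. Applying the sign and using Lemma~\ref{lemmaNegativeContinuedFraction} (interpreted at the level of the branched surface as the mirror operation that negates every twist) gives
\[ \sigma(\Sigma_1) \;=\; -\Sigma[(2,-2)_{n-1},\, 2,\, 2n-1] \;=\; \Sigma[(-2,2)_{n-1},\, -2,\, -2n+1] \;=\; \Sigma_2. \]
The same calculation, run in reverse, shows $\sigma(\Sigma_2) = \Sigma_1$.

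To finish, I would recall that Hatcher and Thurston proved that essential surfaces in a two-bridge knot coming from distinct continued fraction expansions (with all $|a_i|\ge 2$) are never isotopic. Consequently, any essential surface carried by $\Sigma_1$ is sent by $\sigma$ to an essential surface carried by $\Sigma_2$, and hence is not isotopic to itself; the symmetric statement holds for surfaces carried by $\Sigma_2$. Therefore no essential surface of slope $-4n$ is fixed (even set-wise up to isotopy) by $\sigma$.

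The only delicate point is interpreting $-\Sigma[\,\cdots\,]$ correctly: the formula in Lemma~\ref{lemma:twistsymmetries} records how the plumbing description transforms under turning the 4-plat upside down, and one must verify that the resulting branched surface is indeed the one obtained by negating each integer in the sequence (equivalently, taking the mirror image of every twist region), matching the convention used in Proposition~\ref{proposition:continuedfractions}. Once this bookkeeping is in place the argument is a direct computation, and the Hatcher--Thurston uniqueness theorem does the rest.
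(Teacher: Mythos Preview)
Your argument is correct and follows the same route as the paper: identify the two continued fractions of slope $-4n$, apply Lemma~\ref{lemma:twistsymmetries} to see that $\sigma$ interchanges the corresponding branched surfaces, and then invoke Hatcher--Thurston's uniqueness (their Theorem~1(d)) to conclude no carried surface is fixed. Your explicit computation of the swap and your flagging of the sign convention in $(-1)^{s+1}\Sigma[\,\cdots]$ (which indeed means negating each entry, not an appeal to Lemma~\ref{lemmaNegativeContinuedFraction} per se) are both accurate; the paper simply asserts the swap without writing it out.
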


\begin{proof}
By  Proposition~\ref{proposition:continuedfractions} it suffices to analyze the action of $\sigma$ on the branched  surfaces.  By Lemma~\ref{lemma:twistsymmetries}, $\sigma$  fixes the continued fractions \[ [2n,-2n] \ \text{ and}  \ [(-2,2)_{n-1},-3, (2,-2)_{n-1}]\]   and swaps \[  [2n-1,2,(-1,2)_{n-1}] \ \text{ and } \ [(-2,2)_{n-1},-2,-2n+1].\]  It follows that no surface of slope $-4n$ is fixed by $\sigma$, because in Hatcher and Thurston's classification (\cite{MR778125} Theorem 1 part  d)) surfaces correspond to unique continued fractions. 
\end{proof}

\subsection{Detected Slopes of the $K_n$ Double Twist Knots} \label{examplesections}

The character varieties for the knots $K_n$ ($|n|>1$) all have exactly two components which contain characters of irreducible representations, by \cite{MR2827003}.  Let $X_0(n)$ denote the canonical component, and $X_1(n)$ the other component with $X(n)=X_0(n)\cup X_1(n)$;  the algebraic set  $X(n)$ is the Zariski closure of the irreducible characters. 
In this section we will determine which slopes are detected by the ideal points of the character variety of the $K_n$ knots.

We will use the following theorem of Ohtsuki from  \cite{MR1248091}. The ideal points in the statement are ideal points of components of the $\PSL_2(\C)$ character variety which contain characters of irreducible representations (see \cite[Section 1]{MR1248091}). 
\begin{thm}[Ohtsuki]\label{OhtsukiTheorem1}
There is a 1 to 1 correspondence between the ideal points of $X(K(p,q))$ and the elements of the set:

\[ \bigcup_{[n_j]}(\{ (k_1,\cdots,k_N) |k_i\in\mathbb{Z}/(n_i),k_i\neq 0,  \exists i \text{ such that } k_i\neq \frac{n_i}{2}   \}/ \sim ). \]  Here the union is taken over all continued fraction expressions for $\frac{p}{q}$ and the equivalence relation is generated by $(k_j)\sim(-k_j)$ and $(k_j)\sim((-1)^j k_j)$.
\end{thm}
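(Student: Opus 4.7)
The plan is to combine an explicit parametrization of $X(K(p,q))$ coming from the two-bridge presentation with the Culler--Shalen/Hatcher--Thurston dictionary between ideal points and plumbed essential surfaces. I would start from the standard two-generator presentation $\pi_1(S^3-K(p,q)) = \langle a, b \mid wa = bw\rangle$, where $a,b$ are meridians and $w=w(a,b)$ is the two-bridge word associated to $p/q$. On the irreducible part of the $\SL_2(\C)$ character variety, the functions $x = I_a = I_b$ and $y = I_{ab}$ give coordinates, and the relation becomes a single polynomial $\Phi_{p/q}(x,y)=0$ cutting out $X(K(p,q))$ as an affine curve. Ideal points of a smooth projective model of this curve are then in bijection with valuation-theoretic directions in which $x$ or $y$ blows up.

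Next I would set up the correspondence with essential surfaces. By Culler--Shalen, each ideal point $\xi$ determines a non-trivial action of $\pi_1$ on a tree $T_\xi$, and hence an essential surface $F_\xi$ dual to this action. By Hatcher--Thurston, every essential surface in $S^3-K(p,q)$ is (up to isotopy) the plumbed surface $\Sigma[a_1,\dots,a_N]$ coming from some continued fraction $p/q = r + [a_1,\dots,a_N]$ with $|a_i|\ge 2$, and distinct continued fractions give non-isotopic surfaces. The detected surface $F_\xi$ thus pins down a specific continued fraction $[a_1,\dots,a_N]$, giving the outer union in the indexing set of the theorem.

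Having fixed the continued fraction, I would then recover the tuple $(k_1,\dots,k_N)$ from the geometry of the splitting. The surface $\Sigma[a_1,\dots,a_N]$ decomposes $S^3-K$ as a graph of groups along the twist regions, so the Bass--Serre tree $T_\xi$ is a refinement of a finite edge-path whose $i$-th edge is stabilized by a cyclic subgroup of order $n_i = |a_i|$ arising from the $i$-th twist region. Restricting the tautological representation associated to $\xi$ to each edge stabilizer produces a non-trivial character into $\mathbb{Z}/n_i$; its image, read in the natural generator, is the class $k_i$. The condition $k_i \ne 0$ records that the action is genuinely non-trivial on the $i$-th edge (otherwise one could collapse an edge and recover a simpler surface, contradicting minimality), while the existence of some $k_i \ne n_i/2$ expresses the fact that $\rho$ is not a dihedral/reducible boundary character. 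Conversely, given any admissible tuple, a direct construction along the plumbing produces a family of irreducible representations whose associated character converges (in a chosen local parameter) to a well-defined ideal point, showing surjectivity.

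The last step is verifying the equivalence relation. Replacing $(k_j)$ by $(-k_j)$ corresponds to inverting the generator used to identify each edge stabilizer with $\mathbb{Z}/n_j$, or equivalently to the involution $\rho \mapsto \rho \circ \iota$ induced by the hyperelliptic involution of the two-bridge knot, which acts trivially on traces and therefore fixes the character. The relation $(k_j)\sim ((-1)^j k_j)$ reflects the alternating-sign convention for the continued fraction $[a_1,-a_2,a_3,-a_4,\dots]$ used in the Hatcher--Thurston plumbing: flipping orientations of alternate bands multiplies every other $k_j$ by $-1$ while leaving the underlying surface, and hence the detected ideal point, unchanged. The main obstacle will be the surjectivity/bijectivity step: one must show that the $k_i$ read off from the action really determines $\xi$ uniquely up to the stated equivalences, which requires controlling the local structure of $X(K(p,q))$ at infinity closely enough to see that no two continued fractions, and no two inequivalent tuples within one continued fraction, can share an ideal point. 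I expect this to reduce to a Newton-polygon analysis of $\Phi_{p/q}$, whose vertices are in bijection with the Hatcher--Thurston branched surfaces and whose edge slopes encode the $k_i$.
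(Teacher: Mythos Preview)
The paper does not prove this statement at all: it is quoted verbatim as a result of Ohtsuki \cite{MR1248091} and then applied. There is therefore no ``paper's own proof'' to compare your proposal against; the authors simply invoke Ohtsuki's theorem as a black box in the proof of Theorem~\ref{mainresultTheoremdetectedslopesstrict}.

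As to the proposal itself: the outline is in the right spirit but is more of a heuristic than a proof. The serious gap is the step where you ``read off'' $k_i \in \mathbb{Z}/n_i$ from the action on the Bass--Serre tree. Edge stabilizers in the splitting along $\Sigma[a_1,\dots,a_N]$ are surface groups (fundamental groups of the plumbed pieces), not cyclic groups of order $|a_i|$, so there is no canonical character into $\mathbb{Z}/n_i$ sitting there to extract. Ohtsuki's actual argument is analytic rather than tree-theoretic: he writes down the defining polynomial of the character variety in Riley-type coordinates, uses a recursive formula tied to the continued fraction expansion to factor its leading behavior at infinity, and then counts the resulting branches (Puiseux expansions) directly. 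The combinatorial labels $(k_1,\dots,k_N)$ arise from roots of unity appearing in those expansions, and the equivalence relation from symmetries of the recursion, not from orientation conventions on plumbed bands. Your Newton-polygon suggestion at the end is closer to what is actually needed, but carrying it out requires the explicit recursion, which your sketch does not set up.
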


 We now prove Theorem~\ref{mainresultTheoremdetectedslopesstrict} which we  restate for convenience.

\symmetricdoubletwistthm*
 
\begin{proof} 
By Proposition~\ref{slopeOfEssentialSurfaces} we know that the strict boundary slopes for the knot $K_n$ are $0$, $-4n$, and $-8n+2$.  As a consequence of Theorem \ref{OhtsukiTheorem1} all strict boundary slopes in two-bridge knots are detected by ideal points of the character variety. Specifically, by Ohtuski's theorem,   the continued fractions 
\[  [2n-1,2,(-2,2)_{n-1}] \text{ and }  [(-2,2)_{n-1},-2,-2n+1] \] both have exactly $n-1$ ideal points corresponding to them.

For completeness, we now show that a canonical component $X_0(M)$ always detects at least two boundary slopes, although this is well known.  
The evaluation function  $I_{\gamma}:X_0(M) \rightarrow \C$ defined by $I_{\gamma}(\rho)=\chi_{\rho}(\gamma)$ is non-constant on $X_0(M)$  for all $\gamma \in \pi_1(\partial M)$ \cite[Proposition 2]{MR735339}.  
As a result, the image  $A_0(M)$  of $X_0(M)$ under the eigenvalue map to the $A$-polynomial curve cannot correspond to a  factor of the type $f(L^aM^b)$ where $f(x)$ is a polynomial. Therefore the Newton polygon  associated to $A_0(M)$ is two-dimensional.  By  \cite{ccgls} the boundary slopes of the Newton polygon are the boundary slopes detected by the character variety, so any canonical component $X_0(M)$ detects at least two slopes.

It remains to show that  $-4n$ is not detected on the canonical component.   We first show that $S^3-K_n$ has a flexible cusp.  By \cite{MR3575575}, if $M$ is a non-arithmetic hyperbolic two-bridge link complement, then $M$ admits no hidden symmetries.  By \cite{MR1099096} the figure-eight is the only arithmetic  knot.  Therefore, for $|n|>1$ the complement of $K_n$ admits no hidden symmetries. It follows that the corresponding quotient orbifold   has a flexible cusp  by  \cite{MR1184416}.  The fact that  $S^3-K_n$ has a flexible cusp follows.  By Lemma~\ref{lemma:-4nnotsymmetric} no surface of  slope $-4n$ is  preserved under the symmetry $\sigma$ which turns the 4-plat upside down.   Therefore, by  Theorem~\ref{thm:maintheorem}, a canonical component  cannot detect any surface of slope $-4n$.  From the above, it must detect at least two boundary slopes, so it must detect slopes 0 and $-8n+2$.  Therefore, there must be surfaces of these slopes which are preserved by the full symmetry group of the manifold. 
\end{proof}

 \begin{remark}

We believe that the slopes detected on the non-canonical component $X_1(M)$ of the character variety are exactly $0$ and $-4n$.  By direct computation, using \cite{MR3425625}, we found the corners of the $A$-polynomial for the $K_n$ knot for $n$ up to $10$ to be $(0,12n-1),(2,12n-1),(1,4n),(3,8n-2),(2,0),(4,0)$. This combined with a calculation of the Newton polygon of the canonical component of the character variety  also based on \cite{MR3425625}  shows that the Newton polygon for the non-canonical component of the $A$-polynomial for these knots has corners at $(0,8n-2),(1,8n-2),(1,0),(2,0)$.
\end{remark}



\bibliographystyle{amsplain}
\bibliography{myrefs.bib}

\end{document}